\newtheorem{thm}{Theorem}[section]
\newtheorem*{thm*}{Theorem}
\newtheorem{prop}[thm]{Proposition}
\newtheorem{lem}[thm]{Lemma}
\newtheorem{cor}[thm]{Corollary}
\newtheorem*{que*}{Question}
\theoremstyle{definition}
\newtheorem{defin}[thm]{Definition}
\newtheorem{example}[thm]{Example}
\theoremstyle{remark}
\newtheorem{notation}[thm]{Notation}
\newtheorem{remark}[thm]{Remark}
\newcommand{\bdry}{\partial_\infty}
\newcommand{\veps}{\varepsilon}
\def\bb #1{\mathbb{#1}}
\def\cal #1{\mathcal{#1}}
\def\wt #1{\widetilde{#1}}
\def\rm #1{\mathrm{#1}}
\def\sf #1{\mathsf{#1}}
\numberwithin{equation}{section}
\title[Tree-type]{Sequences of Hitchin representations of Tree-type}
\author{Giuseppe Martone}
\begin{document}
\maketitle
\noindent
\begin{abstract}
The \emph{Hitchin component} is a connected component of the character variety $\sf {Char}_m(S)$ of reductive group homomorphisms from the fundamental group of a closed surface $S$ of genus greater than 1 to the Lie group $\rm {PSL}_m(\bb R)$. The Teichm\"uller space of $S$ naturally embeds into the Hitchin component. The limit points in the Thurston compactification of the Teichm\"uller space are well-understood. Our main goal is to provide non-trivial sufficient conditions on a sequence of Hitchin representations so that the limit of this sequence in the Parreau boundary can be described as a $\pi_1(S)$-action on a tree. These non-trivial conditions are given in terms of Fock-Goncharov coordinates on moduli spaces of generic tuples of flags.
\end{abstract}
\tableofcontents

\section{Introduction}\label{sec:intro}

Let $S$ be a connected, oriented, closed surface of genus $g>1$. The \emph{Teichm\"uller space} $\cal T(S)$ is the space of hyperbolic structures on $S$. The holonomy homomorphism lets us identify the Teichm\"uller space with a connected component of the \emph{character variety}
\[
\sf {Char}_m(S)=\rm {Hom}^*(\pi_1(S),\rm{PSL}_m(\bb R))/\rm{PSL}_m(\bb R)
\]
of conjugacy classes of reductive group homomorphisms from $\pi_1(S)$ to $\rm {PSL}_m(\bb R)$ for $m=2$. 

Hitchin \cite{Hit92} used the theory of Higgs bundles to single out a connected component in $\sf{Char}_m(S)$ for $m\geq 3$ that shares many properties with $\cal T(S)$. This \emph{Hitchin component} $\sf{Hit}_m(S)$ is a real analytic manifold isomorphic to $\bb R^{(m^2-1)(2g-2)}$ containing an embedded copy of the Teichm\"uller space.\footnote{This embedded copy is given by post-composing representations in $\cal T(S)$ with the unique (up to conjugation) irreducible representation of $\rm{PSL}_2(\bb R)$ into $\rm{PSL}_m(\bb R)$.} Furthermore, Fock and Goncharov \cite{FG06} and Labourie \cite{Lab06} showed that Hitchin representations are discrete and faithful. 

Work of Parreau \cite{Par12} provides a compactification of the Hitchin component with boundary points corresponding to actions of $\pi_1(S)$ on a \emph{Bruhat-Tits building} $\cal B_m$. This is a generalization of \cite{MoSh84, Bes88, Pau88}, where the boundary points in Thurston's compactifcation of the Teichm\"uller space \cite{Thu88} are realized as actions of $\pi_1(S)$ on \emph{real trees}.

In recent years, a wide variety of tools have been applied to investigate the boundary of the Hitchin component \cite{Ale08,BIPP17,BIPP19,CoLi17,CDLT,FG06,Le1,Le2,Lof07,M18,Par15b,Par15a}. In this paper, we consider the positivity properties given by \cite{FG06} and single out sequences of Hitchin representations whose asymptotic behavior is similar to the asymptotic behavior of a sequence of representations in the Teichm\"uller space.

We now introduce the background necessary to state our main results. Fock and Goncharov \cite{FG06} used representation theoretic methods to show that a moduli space closely related to the Hitchin component is a \emph{positive variety}: it admits an atlas such that the coordinate changes are quotients of polynomials with only positive coefficients. This positivity has several important consequences for Hitchin representations that we briefly recall in \S\S \ref{ssec:flags}, \ref{ssec:flips}, and \ref{ssec:finitepoly}.

Bonahon and Dreyer \cite{BoDr14,BoDr17} used the (logarithms of the) Fock-Goncharov coordinates to parametrize the Hitchin component. Their parametrization extends Thurston's shearing parametrization of the Teichm\"uller space \cite{Thu86,Bon96}. One starts by choosing a \emph{maximal geodesic lamination $\lambda$}, namely a closed subset of $S$ which is a union of disjoint complete simple curves, called \emph{leaves}, that cut the surface into ideal triangles. The parameters of a Hitchin representation $\rho$ with respect to $\lambda$ come in two flavors: there are the \emph{shearing parameters} $\sigma^a(\rho,e)$, $a=1,\dots, m-1$ associated to a leaf $e$, and there are the \emph{triangle parameters} $\tau^{abc}(\rho, t)$ associated to the ideal triangle $t$. Here, $a,b,c\in\bb Z_{>0}$ with $a+b+c=m$. See \S \ref{ssec:coords} for a more detailed discussion. 

Our main goal is to provide sufficient conditions on the coordinates of a sequence of Hitchin representations so that the limit of this sequence in the Parreau boundary can be described as a $\pi_1(S)$-action on a simplicial tree. 

In order to motivate our sufficient conditions, let us briefly recall the construction of the points in the Parreau boundary. A sequence of Hitchin representations defines a sequence of actions of $\pi_1(S)$ on the symmetric space of $\rm{PSL}_m(\bb R)$. The limiting action of $\pi_1(S)$ on the Bruhat-Tits building $\cal B_m$ arises from taking an \emph{asymptotic cone} of the symmetric space, where we rescale the metric on $\cal B_m$ via an appropriate infinitesimal sequence. 

The symmetric space of $\rm{PSL}_2(\bb R)$ is the hyperbolic plane $\bb H^2$ and its asymptotic cone is a real tree. We can describe this degeneration on (pairs of adjacent) ideal triangles: when rescaling the metric on $\bb H^2$, an ideal triangle converges to a tripod in the limiting tree. Moreover, given two adjacent ideal triangles, one can define the shearing along their common edge. In the limit, these two ideal triangles converge to two tripods $T$ and $T'$. If we are at the center $O_T$ of the tripod $T$, i.e. the intersection of the three lines forming $T$, the relative position of the center $O_{T'}$ of $T'$ is encoded by the asymptotic behavior of the shearing. More precisely, the sign of the rescaled shearing tells us if $O_{T'}$ is to the left or to the right as seen from $O_T$, and the asymptotic behavior of the absolute value of the shearing describes the distance between the two centers.

The analogous description gets more complicated when $m\geq 3$. The case $m=3$ was treated in \cite{Par15b,Par15a}. Using different methods, in \cite{M18} we described explicitly the limits of ideal triangles and ideal quadrilaterals using the Fock-Goncharov parameters for every $m\geq 3$. This allows us to extrapolate explicit conditions on the Fock-Goncharov coordinates that guarantee:
\begin{enumerate}[(A)]
	\item the limit of an ideal triangle has a natural center.
	\item two adjacent ideal triangles have a well-defined notion of shearing \emph{to left or to the right}.
\end{enumerate}

To be more precise, we start by fixing a maximal geodesic lamination $\kappa$ obtained by first cutting the surface $S$ into a family of pair of pants $\cal P$ and then in each pair of pants choosing leaves as in Figure \ref{fig:stdgeod}. This is done in order to assign coordinates to the Hitchin component.

\begin{defin}[Definition \ref{def:treetype}] A sequence of Hitchin representations $(\rho_n)$ is of \emph{tree-type with respect to the maximal geodesic lamination $\kappa$} if there exists a sequence of strictly positive real numbers $(r_n)_n\subset \bb R$ converging to 0, and such that
\begin{itemize}
	\item[(A)] for every ideal triangle $t$ defined by $\kappa$, and for every $a,b,c\in\bb Z_{>0}$ with $a+b+c=m$
	\[
	\lim_{n\to\infty} r_n \tau^{abc}(\rho_n,t)=0,
	\]
	\item[(B)] for every leaf $e$ contained in a pair of pants for $\kappa$, the limit 
	\[
	\lim_{n\to\infty} r_n\sigma^a(\rho_n, e)
	\] 
	is either
	\begin{itemize}
	\item[(B1)] non-negative for all $a=1,\dots, m-1$, or
	\item[(B2)] non-positive for all $a=1,\dots, m-1$.
	\end{itemize}
\end{itemize}
We refer to $r_n$ as a \emph{scaling sequence} for $\rho_n$. 
\end{defin}

We say that the scaling sequence is \emph{trivial} if all the limits in the definition of tree-type are zero. We restrict our attention to non-trivial scaling sequences.

\begin{remark} If we consider a sequence of Hitchin representations as a sequence of actions of $\pi_1(S)$ on the symmetric space of $\rm{PSL}_m(\bb R)$, natural choices of scaling sequences are described in \cite{Bes88,Pau88,Par12}. Another point of view is discussed in Appendix \ref{sec:scaling}. See also Remark \ref{rmk:related}.
\end{remark} 

Embedded in the definition of tree-type is an a priori choice of a maximal geodesic lamination. Our first result states that we can find a \emph{preferred maximal geodesic lamination} adapted to the sequence of Hitchin representations of tree-type and to the pants decomposition $\cal P$. 

\begin{thm}[Theorem \ref{thm:FLP}] Let $(\rho_n)$ be a sequence of Hitchin representations of tree-type with respect to the maximal geodesic lamination $\kappa$. Then, there exists a maximal geodesic lamination $\lambda^+$ which extends the pants decomposition $\cal P$ such that for every leaf $e$ contained in a pair of pants and for every ideal triangle $t$ we have
\begin{align*}
\lim_{n\to\infty}r_n\tau^{abc}(\rho_n,t)=0,\\
\lim_{n\to\infty}r_n\sigma^a(\rho_n,e)\geq 0.
\end{align*}
\end{thm}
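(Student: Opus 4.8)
The plan is to handle one pair of pants at a time and, inside each, to replace the completion of its boundary by a better one through a controlled sequence of flips, using the transformation rules for Fock--Goncharov coordinates recalled in \S\ref{ssec:flips}. \emph{Localization.} The parameter $\tau^{abc}(\rho_n,t)$ depends only on the ideal triangle $t$, and the parameter $\sigma^a(\rho_n,e)$ of a leaf $e$ interior to a pair of pants $P\in\cal P$ depends only on the two ideal triangles of $P$ adjacent to $e$, whose ideal vertices spiral onto $\partial P$; so every quantity in the statement is determined by $\rho_n|_{\pi_1 P}$, and the flips of interior leaves of $P$ never touch the parameters attached to the curves of $\cal P$. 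It therefore suffices to produce, for each $P$ separately, a completion $\lambda^+_P$ of $\partial P$ to a maximal geodesic lamination of $P$ with $\lim_n r_n\tau^{abc}(\rho_n,t)=0$ for all ideal triangles $t$ of $\lambda^+_P$ and $\lim_n r_n\sigma^a(\rho_n,e)\ge 0$ for all interior leaves $e$ of $\lambda^+_P$; the required lamination is then $\lambda^+=\cal P\cup\bigcup_P\lambda^+_P$, which extends $\cal P$ by construction, and all the limits involved continue to exist along the full sequence $(\rho_n)$ since the flip maps are fixed rational functions of the tree-type data of $\kappa$.

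\emph{Flips and their asymptotics.} Fix $P$. Its completions of $\partial P$ form a finite set, any two joined by a finite sequence of flips, and under a flip the new $\sigma$'s and $\tau$'s are explicit positive rational functions of the old ones: the new shearing of the flipped leaf is $-\sigma^a(\rho_n,e)$, while every other new parameter differs from the old one, up to bounded error, by a sum of terms $\pm\log\bigl(1+e^{\pm\sigma^a(\rho_n,e)}\bigr)$ (for $m\ge 3$ some terms also involve triangle parameters, which rescale to $0$ and can be absorbed into the bounded error). Since $\log(1+e^{t})=\max\{t,0\}+O(1)$ and $\lim_n r_n\sigma^a(\rho_n,e)$ exists, one obtains $\lim_n r_n\log(1+e^{\pm\sigma^a(\rho_n,e)})=\max\{\pm\lim_n r_n\sigma^a(\rho_n,e),\,0\}$. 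This gives the key step: if the sequence is of tree-type relative to a completion $\lambda$ and $\lambda'$ is obtained from $\lambda$ by flipping an interior leaf $e$ with $\lim_n r_n\sigma^a(\rho_n,e)\le 0$ for every $a$, then with the \emph{same} scaling sequence the sequence is still of tree-type relative to $\lambda'$, the flipped leaf now satisfies $\lim_n r_n\sigma^a(\rho_n,e')=-\lim_n r_n\sigma^a(\rho_n,e)\ge 0$, and every triangle parameter of $\lambda'$ still has limit $0$ --- the latter because, inspecting the flip formulas, the surviving rescaled corrections to triangle parameters are built only from $\max\{\sigma^a(\rho_n,e),0\}$, which vanishes under the hypothesis. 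The parameters outside the star of $e$ are unchanged, and each interior leaf adjacent to $e$ keeps a limit that is $\ge 0$ for all $a$ or $\le 0$ for all $a$.

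\emph{Termination --- the main obstacle.} It remains to reach, through such flips, a completion with no interior leaf of strictly negative limit. The difficulty is genuine: flipping an interior leaf $e$ with $\lim_n r_n\sigma^a(\rho_n,e)<0$ shifts the limits of the interior leaves adjacent to $e$ by exactly this negative amount, so the number of strictly negative leaves need not drop after a single flip and a naive complexity can increase. I would control the redistribution of negativity using the boundary invariants $\ell_a(\rho_n,C)$ of the cuffs $C$ of $P$: for a Hitchin representation these are positive, and since they are sums of the very same shearing and triangle parameters with the triangle contributions rescaling to $0$, the rescaled limits of the shearings of the interior leaves spiralling onto a fixed cuff must add up to a non-negative number for each $a$. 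This caps the total negativity present on the interior leaves and restricts how it can move under a flip; one then equips the finite flip graph of completions of $\partial P$ with a potential --- for instance a weighted sum over interior leaves of $\max\{-\lim_n r_n\sigma^a(\rho_n,e),\,0\}$, refined lexicographically by the location of a strictly negative leaf relative to $\partial P$ --- and shows that flipping an innermost strictly negative leaf strictly decreases it. Finiteness of the graph then forces the procedure to halt, necessarily at a completion $\lambda^+_P$ with all interior shearing limits non-negative. Conceptually this is the statement that the limiting configuration of tripods, which exists by condition (A) since all triangle parameters rescale to $0$, can be realized so that adjacent tripods shear consistently to one side --- i.e. that the signed limiting data can be put in non-negative normal form --- and making this normalization procedure terminate is exactly the delicate point.
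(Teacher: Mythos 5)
Your localization to individual pairs of pants and the idea of performing diagonal flips on leaves with negative rescaled shearing match the paper's strategy, but the proposal has a genuine gap at exactly the point you identify as ``the main obstacle,'' and it is a gap the paper closes by a structural observation you do not make.

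The key missing piece is what the paper isolates as Lemma~\ref{lem:flip}: using the Bonahon--Dreyer length relations~\eqref{eqn:lengths} applied to a cuff around which two open leaves $e,e'\in\kappa_P$ spiral, \emph{at most one} open leaf per pair of pants can have a negative rescaled shearing for some $a$. (If $e$ and $e'$ both did, then $r_n\sigma^a_n(e)+r_n\sigma^{m-a}_n(e')$ plus the vanishing triangle contributions would eventually be negative, contradicting positivity of $\log(\lambda_a/\lambda_{a+1})(\rho_n,g)$.) Because of this, the construction of $\lambda^+$ is \emph{not} an iteration: in each $P$ you flip the single bad leaf (if any) and you are done. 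There is no flip graph to traverse, no potential function to design, and no termination to worry about. You correctly invoke the boundary length positivity, but you deploy it to ``cap the total negativity'' inside a proposed complexity argument that you then explicitly leave unfinished; the paper instead uses the same input once, at the start, to show the problematic set of leaves is already a singleton per pair of pants.

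A secondary weakness is the treatment of the flip asymptotics. For $m\ge3$ a diagonal flip is not a single algebraic substitution but an explicit composition of cluster mutations $\nu_{r_{\max}-1}\circ\dots\circ\nu_0$ of strata mutations (Lemma~\ref{lem:strata}), and the paper's Theorem~\ref{thm:pmflips} introduces the simplified weighted graphs $\wt{\cal Q}_r(e,n)$ and the ``asymptotic mutation'' $\wt\mu_v$, proving via Proposition~\ref{prop:asymptotics} that one may pass to these asymptotic versions and then tracking, stratum by stratum, which vertices survive with a nonzero rescaled weight. Your summary (``up to bounded error, by a sum of terms $\pm\log(1+e^{\pm\sigma^a})$'') captures the right one-variable phenomenon (cf.\ Lemma~\ref{lem:limits1} and Corollary~\ref{cor:limits2}) but does not by itself establish, for general $m$, that after the full composition of mutations every triangle parameter of the new lamination has rescaled limit $0$; that is precisely what the iterated stratum computation in Theorem~\ref{thm:pmflips} is for. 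Similarly, the claim that all $e'$ adjacent to the flipped $e$ retain nonnegative rescaled shearing (rather than merely remaining one-signed) is established in the paper by a further application of~\eqref{eqn:lengths} in the final paragraph of the proof of Theorem~\ref{thm:FLP}; in your writeup this is asserted rather than derived.
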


One could interpret Theorem \ref{thm:FLP} as a generalization of \cite[\S 8.1]{FLP}, as we find a maximal geodesic lamination $\lambda^+$ extending a pair of pants $P$ and a sequence $\rho_n$ of Hitchin representations of tree-type. The main tools needed for the proof of Theorem \ref{thm:FLP} are the \emph{Bonahon-Dreyer length relations} and the connection between the Fock-Goncharov coordinates and cluster algebras. 

We use Theorem \ref{thm:FLP} to describe the action of $\pi_1(S)$ on a simplicial tree. It follows from \cite{FG06,Lab06} that for every Hitchin representation $\rho$ and every conjugacy class $c$ of elements in $\pi_1(S)$, we can define the \emph{Hilbert length of $c$} as
\[
\ell_H(\rho,c)=\log\left(\frac{\text{largest eigenvalue of }\rho(c)}{\text{smallest eigenvalue of }\rho(c)}\right)>0.
\]
We use the preferred maximal geodesic lamination $\lambda^+$ to study the asymptotic behavior of Hilbert length of curves on the surface. For general $m\geq 3$ we can prove the following. Let us denote by $\wt \lambda^+$ the lift of $\lambda^+$ to the universal cover of $S$.

\begin{thm}[Theorem \ref{thm:action}.\ref{cond:anyd}]Let $(\rho_n)$ be a sequence of Hitchin presentations of tree-type and let $\lambda^+$ be the maximal geodesic lamination given by Theorem \ref{thm:FLP}. Let $c$ be a conjugacy class in $\pi_1(S)$ such that
\[
\lim_nr_n\ell_H(\rho_n,c)\in [0,\infty).
\]
and the curve $c$ lies in a pair of pants $P\in\cal P$. Then, there exists a finite set of leaves $\cal E\subset \wt \lambda^+$, which depends on $c$, such that
\[
\lim_nr_n\ell_H(\rho_n,c)=\lim_nr_n \sum_{e\in\cal E}\sigma(\rho_n,e),
\]
where $\sigma(\rho_n,e)$ is the sum of the shearing $\sigma^1(\rho_n,e)+\dots+\sigma^{m-1}(\rho_n,e)$. 
\end{thm}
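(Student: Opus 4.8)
The plan is to reduce the computation of the Hilbert length $\ell_H(\rho_n,c)$ to the Bonahon--Dreyer length relations and then pass to the rescaled limit using the tree-type hypothesis. First I would fix a representative of the conjugacy class $c$ inside the pair of pants $P\in\cal P$; since $\lambda^+$ extends the pants decomposition $\cal P$, the boundary curves of $P$ are leaves of $\lambda^+$, and the curve $c$ is freely homotopic to a closed geodesic which crosses the lift $\wt\lambda^+$ in a well-defined combinatorial pattern. Lifting $c$ to the universal cover, the axis of $\rho_n(c)$ crosses a finite sequence of leaves and ideal triangles of $\wt\lambda^+$; let $\cal E$ be the (finite, $c$-dependent) set of leaves crossed in one fundamental domain of the axis, and let the triangles traversed be $t_1,\dots,t_k$. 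The Bonahon--Dreyer length relations express $\ell_H(\rho_n,c)$ — more precisely, for each $a=1,\dots,m-1$, the logarithm of the ratio of consecutive eigenvalues of $\rho_n(c)$ — as a signed sum of shearing parameters $\sigma^a(\rho_n,e)$ over $e\in\cal E$ together with triangle parameters $\tau^{abc}(\rho_n,t_i)$ over the traversed triangles. Summing over $a$ (using that the Hilbert length is the sum over $a$ of the consecutive-eigenvalue-ratio logarithms, since the eigenvalues are positive and ordered for Hitchin representations) gives
\[
\ell_H(\rho_n,c)=\sum_{e\in\cal E}\sigma(\rho_n,e)+\sum_{i=1}^{k}\sum_{a+b+c=m}\varepsilon_i^{abc}\,\tau^{abc}(\rho_n,t_i),
\]
where $\sigma(\rho_n,e)=\sigma^1(\rho_n,e)+\dots+\sigma^{m-1}(\rho_n,e)$ and the $\varepsilon_i^{abc}\in\{-1,0,1\}$ are combinatorial signs depending only on the geometry of the crossing pattern, not on $n$.

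Next I would multiply through by the scaling sequence $r_n$ and take $n\to\infty$. By hypothesis (A) in the definition of tree-type, $r_n\tau^{abc}(\rho_n,t_i)\to 0$ for every traversed triangle $t_i$ and every admissible $(a,b,c)$; since $\cal E$ and the triangle set are finite and the signs $\varepsilon_i^{abc}$ are bounded, the entire triangle contribution vanishes in the limit. Therefore
\[
\lim_n r_n\ell_H(\rho_n,c)=\lim_n r_n\sum_{e\in\cal E}\sigma(\rho_n,e),
\]
and the right-hand limit exists and lies in $[0,\infty)$ precisely when the left-hand one does (they are equal), which is the stated hypothesis; moreover by Theorem~\ref{thm:FLP} each $\lim_n r_n\sigma^a(\rho_n,e)\geq 0$, so the sum is automatically non-negative, consistent with $\ell_H>0$.

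The main obstacle I anticipate is the bookkeeping in the first step: one must carefully track which edges $e$ appear, with which multiplicity and sign, as the axis of $\rho_n(c)$ wraps around $c$, and confirm that the Bonahon--Dreyer relations apply verbatim when $c$ is an interior curve of $P$ rather than a boundary leaf of the full lamination $\lambda^+$. This requires checking that the finitely many leaves of $\wt\lambda^+$ meeting the axis all lie in (lifts of) pairs of pants — so that the shearing parameters $\sigma^a(\rho_n,e)$ are exactly those appearing in the definition of tree-type — and that no leaf in the complement of the pants (if any such leaves exist in $\lambda^+$) is crossed; this should follow from the fact that $\lambda^+$ extends $\cal P$ and $c\subset P$, but it must be verified. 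A secondary technical point is ensuring that the Hilbert length $\ell_H(\rho_n,c)$ genuinely decomposes as the sum over $a$ of the consecutive eigenvalue gaps; this uses the Anosov property from \cite{FG06,Lab06} guaranteeing that $\rho_n(c)$ is diagonalizable with distinct positive eigenvalues, so that $\log(\lambda_{\max}/\lambda_{\min})=\sum_{a=1}^{m-1}\log(\mu_a/\mu_{a+1})$ for the ordered eigenvalues $\mu_1>\dots>\mu_m$. Once these combinatorial and structural points are pinned down, the limit computation is immediate from tree-type condition (A).
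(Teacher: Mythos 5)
Your proposed first step rests on an exact identity that does not hold. You claim that for a general conjugacy class $c$ contained in a pair of pants $P$, the Hilbert length decomposes exactly as
\[
\ell_H(\rho_n,c)=\sum_{e\in\cal E}\sigma(\rho_n,e)+\sum_{i}\sum_{a+b+c=m}\varepsilon_i^{abc}\,\tau^{abc}(\rho_n,t_i)
\]
with combinatorial signs $\varepsilon_i^{abc}\in\{-1,0,1\}$. The Bonahon--Dreyer length relations (Proposition~\ref{prop:lengthequ}) give such an exact formula only for the \emph{boundary curves} of the pairs of pants, precisely because the open leaves spiral around those curves. A general $c$ inside $P$ need not be simple: it can wind around $P$ arbitrarily, crossing the lamination many times and reentering the same triangle. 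For such $c$, the element $\rho_n(c)$ is a product of many edge and triangle matrices, and $\ell_H(\rho_n,c)=\log(\lambda_1/\lambda_m)$ is computed from the \emph{trace} (and trace of the inverse) of that product. The trace is a polynomial in exponentials of the Fock--Goncharov coordinates, not a monomial, so $\ell_H(\rho_n,c)$ is not a signed integer linear combination of the $\sigma^a$'s and $\tau^{abc}$'s. Even in the model case $m=2$ the length of a non-simple closed curve inside a pair of pants is $2\cosh^{-1}(|\rm{Tr}\,\rho_n(c)|/2)$, which is not a finite sum of shearings.

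What the paper does instead (\S\ref{ssec:proofaction}) is replace $\ell_H(\rho_n,c)$ by $\log T_H(\rho_n(c))=\log(\rm{Tr}(A_{n,c})\rm{Tr}(A_{n,c}^{-1}))$ via Lemma~\ref{lem:trace}, build a specific totally positive representative $A_{n,c}=\frak D_{n,1}\frak T_{n,1}\cdots\frak D_{n,k}\frak T_{n,k}$ from the disconnecting triangulation $\cal E_{\gamma,p}$ of \S\ref{ssec:finipoly2} (Propositions~\ref{prop:gtilde} and~\ref{prop:multi}), and then sandwich $\rm{Tr}(A_{n,c})$ between the $(m,m)$ diagonal entry from below and an upper bound extracted entry-by-entry. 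Positivity of all matrix entries is what makes these two-sided estimates work, and the tree-type hypothesis together with Theorem~\ref{thm:FLP} shows that the edge matrices dominate while the triangle matrices contribute at most $\exp(\veps/r_n)$ factors. The correct intuition in your write-up --- that the triangle parameters are the terms that die after rescaling --- survives, but the mechanism is an asymptotic squeeze on traces of products of totally positive matrices, not an exact Bonahon--Dreyer-type linear relation. Without some version of that trace argument (or another way to linearize the trace asymptotically), the proof does not go through.
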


In the case of $m=3$, we can prove a stronger result. Thanks to \cite{FG06,Lab06}, one can refine the notion of length of a curve. The \emph{simple root lengths} of $\rho$ and $c$ are
\[
\ell_1(\rho,c)=\log\left(\frac{\text{largest eigenvalue of }\rho(c)}{\text{middle eigenvalue of }\rho(c)}\right)>0,\text{ and } \ell_2(\rho,c)=\log\left(\frac{\text{middle eigenvalue of }\rho(c)}{\text{smallest eigenvalue of }\rho(c)}\right)>0
\]

\begin{thm}[Theorem \ref{thm:action}.\ref{cond:d3}] Let $(\rho_n)$ be a sequence of Hitchin presentations of tree-type and let $\lambda^+$ be the maximal geodesic lamination given by Theorem \ref{thm:FLP}. Let $c$ be a conjugacy class in $\pi_1(S)$ such that
\[
\lim_nr_n\ell_H(\rho_n,c)\in [0,\infty).
\]
Assume that $\lim_nr_n\sigma^a(\rho_n,d)\geq 0$ for all $a=1,2$, and $d$ a curve in $\cal P$, and, that there exists $C>0$ such that $\ell_a(\rho,d)>C$ for all $a=1,2$, and $d$ in $\cal P$. Then, there exists a finite set of leaves $\cal E\subset \wt \lambda^+$, which depends on $c$, such that
\[
\lim_nr_n\ell_H(\rho_n,c)=\lim_nr_n \sum_{e\in\cal E}\sigma(\rho_n,e),
\]
where $\sigma(\rho_n,e)=\sigma^1(\rho_n,e)+\dots+\sigma^{m-1}(\rho_n,e)$.
\end{thm}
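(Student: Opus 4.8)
The plan is to derive the statement from the Bonahon--Dreyer length formula for the lamination $\lambda^+$ produced by Theorem \ref{thm:FLP}. Since for $m=3$ the Hilbert length telescopes into the two simple-root lengths, $\ell_H = \ell_1 + \ell_2$, it is enough to treat $\ell_1(\rho_n,c)$ and $\ell_2(\rho_n,c)$. Fix a lift $\tilde c$ of the geodesic representative of $c$ and a fundamental domain for the $\langle c \rangle$-action on $\tilde c$. The Bonahon--Dreyer parametrization \cite{BoDr14,BoDr17} then expresses each $\ell_a(\rho_n,c)$ as a \emph{finite} sum of shearing invariants $\sigma^a(\rho_n,e)$ over a finite set $\cal E \subset \wt\lambda^+$ of leaves determined by $c$ and $\lambda^+$, plus a linear combination --- with coefficients independent of $n$ (they depend only on $c$ and $\lambda^+$) --- of triangle invariants $\tau^{111}(\rho_n,t)$ over the finitely many triangles $t$ of $\lambda^+$ met by $c$. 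Adding the two formulas and using that $\sigma(\rho_n,e) = \sigma^1(\rho_n,e) + \sigma^2(\rho_n,e)$ does not depend on a choice of co-orientation of $e$, one obtains
\[
\ell_H(\rho_n,c) \;=\; \sum_{e \in \cal E} \sigma(\rho_n,e) \;+\; R_n,
\]
where $R_n$ is a fixed finite linear combination of the $\tau^{111}(\rho_n,t)$.

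Granting this decomposition, the conclusion follows quickly. Multiplying by $r_n$ and invoking Theorem \ref{thm:FLP}, which gives $\lim_n r_n \tau^{abc}(\rho_n,t) = 0$ for every triangle $t$ of $\lambda^+$, we get $\lim_n r_n R_n = 0$ since $R_n$ is a fixed finite combination of such invariants. Hence
\[
r_n \sum_{e \in \cal E} \sigma(\rho_n,e) \;=\; r_n\, \ell_H(\rho_n,c) - r_n R_n,
\]
and the right-hand side converges --- the first term by hypothesis, the second to $0$ --- so the left-hand side converges to the same value, which is the claimed identity. (Each summand $r_n \sigma(\rho_n,e)$ with $e \in \cal E$ is in fact asymptotically non-negative, by Theorem \ref{thm:FLP} together with the hypothesis $\lim_n r_n \sigma^a(\rho_n,d) \geq 0$ for $d \in \cal P$.)

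The substance of the argument is therefore establishing the decomposition above with $\cal E$ finite, and this is exactly where the extra hypotheses are used --- which also explains why Theorem \ref{thm:action}.\ref{cond:anyd} instead restricts $c$ to a single pair of pants. If $c$ lies in a pair of pants, it stays at positive distance from every pants curve, hence meets only the finitely many non-closed leaves of $\lambda^+$ inside that pants, each finitely often, and the formula is the standard one of \cite{BoDr14}. When $c$ crosses a pants curve $d \in \cal P$, which is a closed leaf of $\lambda^+$, the difficulty is that near each point of $\tilde c \cap \wt d$ the geodesic $\tilde c$ meets the non-closed leaves of $\wt\lambda^+$ spiralling onto $\wt d$ in infinitely many points, so that the Bonahon--Dreyer formula must attach to each such crossing a single \emph{resummed} contribution of the form $\sigma(\rho_n,d)$ plus triangle invariants of the triangles adjacent to $d$. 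The spiral is governed by the holonomy $\rho_n(d)$, and the hypotheses control this resummation: $\ell_a(\rho_n,d) > C$ for all $n$ and $a = 1,2$ supplies the uniform exponential contraction rate needed for convergence and for keeping the remainder under control, while $\lim_n r_n \sigma^a(\rho_n,d) \geq 0$ fixes the direction of spiralling so that the contribution is a single positive copy of $\sigma(\rho_n,d)$ rather than a cancelling combination --- the remainder triangle terms being absorbed into $R_n$ and killed by the rescaling as above. The model case is $c = d$, where the decomposition is precisely a Bonahon--Dreyer length relation expressing $\ell_a(\rho_n,d)$ as $\sigma^a(\rho_n,d)$ plus triangle invariants adjacent to $d$; the general resummation reduces crossing a pants curve to this case. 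I expect this resummation around the closed leaves of $\lambda^+$ to be the main obstacle; once it is in place, the first two paragraphs complete the proof.
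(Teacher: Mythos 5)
Your proposal does not actually establish its central claim, and it takes a genuinely different route from the paper's. You posit a decomposition
\[
\ell_H(\rho_n,c) \;=\; \sum_{e \in \cal E} \sigma(\rho_n,e) \;+\; R_n
\]
with $\cal E$ a finite set of leaves of $\wt\lambda^+$ and $R_n$ a fixed finite linear combination of triangle invariants, and you correctly observe that once this is in hand the conclusion follows from $\lim_n r_n \tau^{abc}(\rho_n,t)=0$. But the only Bonahon--Dreyer length formula quoted in the paper (Proposition~\ref{prop:lengthequ}) applies \emph{only} to pants curves $g\in\lambda_c$; there is no such formula for a general conjugacy class $c$ that crosses pants curves, precisely because $\tilde c$ then meets the spiralling open leaves of $\wt\lambda^+$ infinitely often. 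You flag this as the ``resummation'' difficulty and declare it the main obstacle, but you never carry it out — so the proposal has a genuine gap at its heart, and it is exactly the step that the hypotheses of case~\ref{cond:d3} are needed for.

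The paper sidesteps the resummation problem entirely. Rather than seeking a length formula, it builds (\S\ref{ssec:finitepoly}--\ref{ssec:finipoly2}, following Fock--Goncharov) a totally positive matrix $A_{n,c}$ representing $\rho_n(c)$ as a finite product of edge matrices $\frak D_{n,i}$ and triangle matrices $\frak T_{n,i}$ associated to a disconnecting triangulation $\cal E_{\gamma,p}$ of a finite ideal polygon cut out by the axis of $c$; the set $\cal E$ in the statement is the set of diagonals of this triangulation, which includes lifts of closed leaves. It replaces $\ell_H$ with $\log T_H = \log(\rm{Tr}(A)\,\rm{Tr}(A^{-1}))$ (Lemma~\ref{lem:trace}), and estimates the trace entrywise using total positivity (Theorem~\ref{thm:actionsuper}). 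The key point where your ``resummation'' difficulty resurfaces is that the triangles of $\cal E_{\gamma,p}$ adjacent to a lifted closed leaf $\wt d$ are \emph{not} triangles of $\wt\lambda^+$, so Theorem~\ref{thm:FLP} does not control their triple ratios. This is the content of the technical Theorem~\ref{thm:d3}: an explicit computation expresses the bad triple ratio $X_n$ in terms of a controlled triple ratio $Y_n$ and the eigenvalue ratios $\mu_{a,n}$, and the hypothesis $\log(\lambda_a/\lambda_{a+1})(\rho_n,d)>C$ keeps the factors $(1-\mu_{a,n}^{-1})$ bounded away from $0$, giving $\lim_n r_n\log X_n=0$. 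Your intuition about which hypotheses matter and why is right, but the mechanism you propose for exploiting them (a telescoped Bonahon--Dreyer identity with remainder) is not available, while the paper's mechanism (trace estimates plus Theorem~\ref{thm:d3}) is.
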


The key result needed to establish Theorem \ref{thm:action}.\ref{cond:d3} is Theorem \ref{thm:d3} which lets us use the tree-type condition to control the asymptotic behavior of certain parameters nearby the boundary of a pair of pants. It is not clear to us if a similar result holds for dimension $m>3$. 

Theorem \ref{thm:action} is proved by finding a well-adapted totally positive representative for $\rho(c)$ in $\rm{GL}(d,\bb R)$, following \cite{FG06}.

\begin{remark}\label{rmk:related} In related work, Burger, Iozzi, Parreau, and Pozzetti \cite{BIPP17, BIPP19} study our main question in the context of \emph{geodesic currents}.

In \cite{Bon88}, Bonahon described representations in the Teichm\"uller space (and their limits) in terms of certain measures on the space of unoriented geodesics on the universal cover of $S$. Generalizing this work, Zhang and the author \cite{MZ19} associate to each Hitchin representation a \emph{Hilbert geodesic current}, which is well-adapted to the study of the Hilbert length. See also \cite{BCLS18} for an independent construction.

By compactness of the space of projectivized geodesic currents, there exists a scaling sequence $r_n$ such that a sequence of Hilbert geodesic currents scaled by $r_n$ converges to a geodesic current $\mu$. 

Burger, Iozzi, Parreau, Pozzetti \cite{BIPP17, BIPP19} give criteria involving the asymptotic behavior of the length of the shortest curve on the surface that guarantee that a sequence of Hitchin representations converges to a point in the boundary of the Teichm\"uller space.

In Appendix \ref{sec:scaling}, we show that the scaling sequence $r_n$ given by this construction is infinitesimal and, therefore, can be used to define sequences of Hitchin representations of tree-type. The result in Appendix \ref{sec:scaling} was proved by the author together with C. Ouyang. Independently, M. B. Pozzetti communicated the proof to the author.
\end{remark}

\section*{Acknowledgments}

The author wishes to thank Francis Bonahon for the many insightful conversations, above all at the early stages of this project. This work has greatly benefitted from conversations with Marc Burger, Richard Canary, Charles Ouyang, and Maria Beatrice Pozzetti. We take this opportunity to thank them. Part of this work was completed while the author was visiting Rice University. We thank Mike Wolf and the Mathematics Department for their kind hospitality. This research was partially supported by the grant DMS-1406559 from the U.S. National Science Foundation. In addition, the author gratefully acknowledges support from the AMS-Simons travel grant, and NSF grants DMS-1107452, 1107263 and 1107367 ``RNMS: GEometric structures And Representation varieties'' (the GEAR Network).

\section{Coordinates for the Hitchin component}\label{sec:prelim}

Bonahon and Dreyer \cite{BoDr14, BoDr17}, building on work of Fock and Goncharov \cite{FG06}, give a parametrization of the Hitchin component which depends on a choice of a \emph{maximal geodesic lamination}. This construction generalizes the Thurston's shearing parametrization of the Fuchsian space \cite{Thu86, Bon96}. In this section we recall some of the main results from \cite{FG06,BoDr14}.

We start by studying the space $\sf {F}_m(\bb R)$ of flags in $\bb R^m$, which is the space of maximal nested chains of proper vector subspaces of $\bb R^m$. Concretely, a flag $F\in\sf {F}_m(\bb R)$ is a collection of $m-1$ vector subspaces $F^{(a)}\subset \bb R^m$ with $\dim F^{(a)}=a$ and such that $F^{(a)}\subset F^{(a+1)}$. 
\begin{defin} The $k$-tuple of flags $(F_1,\dots F_k)$ is \emph{generic} if for every choice of non-negative integers $a_1,\dots,a_k$ one has
\[
\dim \big(F_1^{(a_1)}+\dots+F_k^{(a_k)}\big)=\min\{a_1+\dots +a_k,m\}.
\]
Let $\sf F^{(k)}_m(\bb R)$ be the space of generic $k$-tuples of flags considered up to the natural action of $\rm{PGL}_m(\bb R)$.
\end{defin}
Our interest in the study of generic tuples of flags is mainly motivated by Theorem \ref{thm:flagmap}, which is one of the main tools for the Bonahon-Dreyer parametrization. Before we state Theorem \ref{thm:flagmap}, let us fix a connected, closed, oriented topological surface $S$ of genus $g>1$. Denote by $\bdry \wt S$ the boundary of the universal cover $\wt S$ of the surface.

\begin{thm}[Theorem 1.4 \cite{Lab06}]\label{thm:flagmap}
Let $\rho\in\sf {Hit}_m(S)$ be a Hitchin representation. There exists a continuous $\rho$-equivariant, well-defined up to $\rm{PGL}_m(\bb R)$-action, \emph{limit map}
\[
\xi_{\rho}\colon\bdry \wt S \to \sf {F}_m(\bb R)
\] 
such that
\begin{enumerate}
	\item[-] the $k$-tuple of flags $(\xi_\rho(x_1),\dots,\xi_\rho(x_k))$ is generic for every $k$-tuple of pairwise distinct points $x_1,\dots, x_k\in\bdry \wt S$;
	\item[-] for every $a_1,\dots, a_k\in \bb Z_{>0}$ with $a=\sum_i a_i\leq m$, for all $x\in\bdry \wt S$, and for sequences $y_1,\dots, y_k$ of pairwise distinct points in $\bdry \wt S$ 
	\[
	\xi_\rho(x)^{(a)}=\lim_{(y_1,\dots, y_k)\to x}\bigoplus_i \xi_\rho(y_i)^{(a_i)}.
	\]
\end{enumerate}
\end{thm}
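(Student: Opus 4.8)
The plan is to produce $\xi_\rho$ from the Anosov property of Hitchin representations and then check the two listed properties; together they say that $\xi_\rho$ is a \emph{hyperconvex Frenet curve} in the sense of Labourie. Concretely, fix an auxiliary hyperbolic metric on $S$ with geodesic flow $\phi_t$ on $T^1S$, form the flat $\sf {F}_m(\bb R)$-bundle over $T^1S$ associated to $\rho$, and look for a continuous $\phi_t$-invariant section whose flag components are, respectively, exponentially contracted and expanded by the flow. Such a section is the same thing as a $\rho$-equivariant map $\bdry\wt S\to\sf {F}_m(\bb R)$ assigning transverse flags to the endpoints of geodesics, and its existence, continuity, and uniqueness are exactly the assertion that $\rho$ is Borel-Anosov (uniqueness of the section gives uniqueness of $\xi_\rho$, up to the conjugation ambiguity already present in $\rho$). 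So the substance of the proof is (i) that every $\rho\in\sf {Hit}_m(S)$ is Borel-Anosov and (ii) that the resulting limit curve has the full genericity and the osculating identity, not merely pairwise transversality.

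First I would establish the Fuchsian base case. For $\rho=\iota\circ\rho_0$ with $\rho_0$ discrete faithful into $\rm{PSL}_2(\bb R)$ and $\iota\colon\rm{PSL}_2(\bb R)\to\rm{PSL}_m(\bb R)$ the irreducible representation, the limit map sends $x\in\partial\bb H^2=\bb{RP}^1$ to the osculating flag at $x$ of the Veronese curve $\bb{RP}^1\hookrightarrow\bb P(\rm{Sym}^{m-1}\bb R^2)=\bb{RP}^{m-1}$; a Vandermonde-determinant computation shows this curve is generic and satisfies the osculating identity, and the associated flat bundle is visibly Anosov. Then, by structural stability of Anosov flows, the Anosov condition is open in $\rm{Hom}(\pi_1(S),\rm{PSL}_m(\bb R))$, the limit curve varies continuously over the Anosov locus, and — with a bit of work using the Anosov dynamics — both the genericity and the osculating property of the limit curve persist under small deformations. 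Hence the set of Hitchin representations carrying a generic Frenet limit curve is open in $\sf {Hit}_m(S)$.

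The main obstacle is closedness of this set in $\sf {Hit}_m(S)$: along a convergent sequence of such representations one must prevent the limit curves from degenerating. Here one exploits structure special to the Hitchin component — every $\rho(\gamma)$ with $\gamma\neq 1$ is purely loxodromic, with real eigenvalues of pairwise distinct absolute value, and one has uniform control on the eigenvalue gaps in terms of the translation length of $\gamma$ — to run an Arzel\`a--Ascoli / equicontinuity argument on the space of Frenet curves with these uniform constants, producing for the limiting representation a generic Frenet limit map and hence the Anosov property. Combining openness, closedness, and Hitchin's theorem that $\sf {Hit}_m(S)$ is connected, the property spreads from the Fuchsian locus to all of $\sf {Hit}_m(S)$, giving $\xi_\rho$ for every $\rho$. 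The two bullet points are then read off directly: genericity of every $k$-tuple $(\xi_\rho(x_1),\dots,\xi_\rho(x_k))$ is the hyperconvexity of the Frenet curve, and $\xi_\rho(x)^{(a)}=\lim_{(y_1,\dots,y_k)\to x}\bigoplus_i\xi_\rho(y_i)^{(a_i)}$ is precisely its defining osculating property, while continuity and $\rho$-equivariance are built into the construction.
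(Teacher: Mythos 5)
The paper does not prove Theorem~\ref{thm:flagmap}; it imports it as a black box from Labourie \cite{Lab06} (and there is a second, independent route to essentially the same conclusion in Fock--Goncharov \cite{FG06} via total positivity, which the paper also uses heavily). So there is no in-paper argument to compare yours against, and the only sensible comparison is with Labourie's original proof.

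Your sketch faithfully reproduces the architecture of that proof: (i) Hitchin representations are Borel--Anosov, so the flat flag bundle over $T^1S$ carries a unique continuous flow-invariant section, equivalently a $\rho$-equivariant boundary map $\xi_\rho$, unique up to the ambient conjugation; (ii) the Fuchsian/Veronese base case is computed directly and is Frenet by a Vandermonde computation; (iii) the Anosov--Frenet locus is open by structural stability; (iv) it is closed, and Hitchin's connectedness theorem then propagates the property across the whole component; (v) genericity of all $k$-tuples and the osculation identity are just the definition of a hyperconvex Frenet curve. All of that is right, and you correctly identify step (iv) as the crux.

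The one place your write-up glosses over a genuine subtlety is precisely that closedness step. You appeal to loxodromicity with distinct eigenvalue moduli and ``uniform control on eigenvalue gaps'' as if that were input available for the limit $\rho$, but a priori those spectral properties are known only for the approximating $\rho_n\in\mathcal A$ (the Anosov--Frenet locus), not for $\rho$; in Labourie's paper they are in fact consequences of the theorem (his Theorem~1.5, quoted as Theorem~\ref{thm:lengths} here), not hypotheses. The equicontinuity you want for an Arzel\`a--Ascoli argument therefore cannot be read off from the spectral data alone. What Labourie actually does is develop an abstract theory of hyperconvex curves and ``$(H)$-property'' boundary maps, proving that a suitable class of such maps is closed under uniform limits by exploiting the hyperbolicity of the $\pi_1(S)$-action on $\bdry\wt S$ and a careful quantitative transversality argument on nested subcurves, and only afterward deduces the spectral statements. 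Your outline would not lead a reader to that mechanism, and without it the closedness claim is unsupported. Everything else in your proposal is a correct and appropriately leveled summary of Labourie's argument.
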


\subsection{Moduli spaces of flags: coordinates}\label{ssec:flags}

In this section we define parameters for elements of $\sf {F}_m^{(k)}(\bb R)$. We start by fixing, once and for all, an identification $\wedge^m \bb R^m\cong \bb R$.

\begin{remark} When clear from context, we blur the distinction between a generic $k$-tuple of flags and its orbit in $\sf F^{(k)}_m(\bb R)$. We denote both of these objects by $(F_1,\dots, F_k)$.
\end{remark}

\begin{defin}[triple and double ratios] For $F_i\in\sf F_m(\bb R)$ and $\alpha=1,\dots, m-1$, choose an arbitrary non-zero vector $f_i^\alpha$ in the one-dimensional vector subspace $\wedge^\alpha F_i^{(\alpha)}$ of $\wedge^\alpha \bb R^m$. 
\begin{enumerate}[$(i)$]
	\item For $a,b,c\in\bb Z_{>0}$ such that $a+b+c=m$, the \emph{$abc$-triple ratio of $(F_1,F_2,F_3)$ in $\sf F_m^{(3)}(\bb R)$} is
\[
T^{abc}(F_1,F_2,F_3):=
\frac{ f^{a+1}_1\wedge  f^{b}_2\wedge  f^{c-1}_3}{ f^{a-1}_1\wedge  f^{b}_2\wedge  f^{c+1}_3}\cdot
\frac{ f^{a}_1\wedge  f^{b-1}_2\wedge  f^{c+1}_3}{ f^{a}_1\wedge  f^{b+1}_2\wedge  f^{c-1}_3}\cdot
\frac{ f^{a-1}_1\wedge  f^{b+1}_2\wedge  f^{c}_3}{ f^{a+1}_1\wedge  f^{b-1}_2\wedge  f^{c}_3}.
\]
	\item For $a=1,\dots, m-1$, the \emph{$a$-double ratio of $(F_1,F_2,F_3,F_4)\in\sf {F}_m^{(4)}(\bb R)$} is
\[
D^a(F_1,F_2,F_3,F_4)=-\frac{f_1^{m-a-1}\wedge f_3^{a}\wedge f_2^1}{f_1^{m-a-1}\wedge f_3^{a}\wedge f_4^1}\cdot
\frac{f_1^{m-a}\wedge f_3^{a-1}\wedge f_4^1}{f_1^{m-a}\wedge f_3^{a-1}\wedge f_2^1}
\]
\end{enumerate}
\end{defin}
The triple and double ratios are well-defined non-zero real numbers because they are constant on $\rm {PGL}_m(\bb R)$-orbits of $k$-tuple of flags, they do not depend on the choice of the $f^\alpha_i$'s, and the $k$-tuples of flags are generic. The symmetries of triple and double ratios under permutations of flags are
\begin{equation}\label{eqn:symmetries}
\begin{aligned}
T^{abc}(F_1,F_2,F_3)=T^{bca}(F_2,F_3,F_1)=(T^{bac}(F_2,F_1,F_3))^{-1},\\
D^a(F_1,F_2,F_3,F_4)=D^{m-a}(F_3,F_4,F_1,F_2)=(D^a(F_1,F_4,F_3,F_2))^{-1}.
\end{aligned}
\end{equation}

Triple and double ratios parameters for a generic $k$-tuple $(F_1,\dots, F_k)$ in $\sf {F}_m^{(k)}(\bb R)$ with $k\geq 4$ can be defined by fixing additional topological data. 

For example, in the case $k=4$, we can keep track of the lack of symmetry in the formula for the double ratios, by saying that \emph{we labeled the vertices of a quadrilateral by the flags $F_1,F_2,F_3,F_4$, in this cyclic counter-clockwise order, and we chose the oriented diagonal from $F_3$ to $F_1$.}

Let us explain the general procedure. Consider $k$ distinct points $x_1,\dots, x_k$ in this cyclic counter-clockwise order along the unit circle $S^1$. Let $\Pi$ denote the polygon inscribed in $S^1$ resulting from drawing chords between consecutive $x_i$'s. 

An \emph{(ideal) triangulation of $\Pi$} is a maximal set $\lambda=\{e_1,\dots, e_{k-3}\}$ of \emph{oriented} diagonals in $\Pi$. Denote by $\cal T_\lambda$ the set of $k-2$ connected components of $\Pi-\lambda$.

\begin{notation}\label{not:vertices}\
\begin{enumerate}[-] 
	\item Each oriented diagonal $e\in\lambda$, singles out two adjacent triangles in $\cal T_\lambda$ with vertices $x_{e^+},x_{e^l},x_{e^-}$ and $x_{e^+},x_{e^-},x_{e^r}$, respectively. Here, by convention, $x_{e^+}$ is the forward endpoint of $e$ and $x_{e^+}, x_{e^l}, x_{e^-}, x_{e^r}$ appear in this cyclic counter-clockwise order along $S^1$. 
	\item Given a triangle $t\in \cal T_\lambda$ with a preferred vertex $x_{t}$, we label the remaining vertices of $t$ by $x_{t'},x_{t''}$ so that $x_{t},x_{t'},x_{t''}$ appear in this cyclic (counter-clockwise) order.
\end{enumerate}
\end{notation}

For every choice of triangulation $\lambda=\{e_1,\dots, e_{k-3}\}$, with $\cal T_\lambda=\{t_1,\dots, t_{k-2}\}$, define a map 
\[
\phi_\lambda\colon\sf {F}_m^{(k)}(\bb R)\to \bb R^{(k-3)(m-1)} \times \bb R^{(k-2)\binom{m-1}{2}}
\] 
by recording
\begin{enumerate}
	\item[-] the double ratios $D^a(F_{e_i^+}F_{e_i^l},F_{e_i^-},F_{e_i^r})$, for every edge $e_i\in\lambda$, and
	\item[-] the triple ratios $T^{abc}(F_{t_i},F_{t'_{i}},F_{t''_{i}})$, for every triangle $t_i\in \cal T_\lambda$, with preferred vertex $x_{t_i}$.
\end{enumerate}

\begin{remark}\label{rmk:tripleratio} The triple ratios of a triangle $t\in \cal T_\lambda$ as defined above depend on a choice of a preferred vertex of $t$. Any two such choices are related by Equation \ref{eqn:symmetries}. 
\end{remark}

\begin{thm}[Theorem 9.1 \cite{FG06}]\label{thm:posflags} The space
\[
\sf F^{(k+)}_m=
\{(F_1,\dots, F_k)\in \sf F_m^{(k)}(\bb R)\colon \text{all the coordinates of }\phi_{\lambda}(F_1,\dots, F_k)\text{ are positive}\}
\]
is independent of the choice of the triangulation $\lambda$.
\end{thm}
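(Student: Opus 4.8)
The plan is to show that the positivity of all Fock--Goncharov coordinates is preserved under the two elementary moves that connect any two triangulations of $\Pi$: a \emph{flip} (replacing one diagonal of a quadrilateral by the other) and a \emph{change of orientation} of a single diagonal. Since the graph of triangulations of a convex polygon is connected under flips (and reorientations clearly do not change the underlying unoriented triangulation), it suffices to analyze these two local situations. The change of orientation is handled immediately by the second line of Equation \ref{eqn:symmetries}: reversing the diagonal $e$ replaces $D^a(F_{e^+},F_{e^l},F_{e^-},F_{e^r})$ by $D^{m-a}(F_{e^-},F_{e^r},F_{e^+},F_{e^l})$, which has the same sign, and the triple ratios of the two adjacent triangles only get their preferred vertices permuted, so by Remark \ref{rmk:tripleratio} and the first line of Equation \ref{eqn:symmetries} their positivity is unaffected.

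The substantive step is the flip. Fix a quadrilateral $Q$ inside $\Pi$ with vertices labeled by flags $G_1,G_2,G_3,G_4$ in cyclic order, and compare the two triangulations: $\lambda$ with diagonal $G_1G_3$, giving triangles $(G_1,G_2,G_3)$ and $(G_1,G_3,G_4)$, versus $\lambda'$ with diagonal $G_2G_4$, giving triangles $(G_2,G_3,G_4)$ and $(G_2,G_4,G_1)$. All diagonals of $\Pi$ other than the flipped one, and all triangles of $\cal T_\lambda$ other than these two, are common to both triangulations, so their coordinates are literally unchanged. Hence I only need to express the new data — the double ratio on $G_2G_4$ and the triple ratios of the two new triangles, together with the double ratios on the four sides of $Q$ that are diagonals of $\Pi$ — as \emph{subtraction-free rational functions} of the old data — the double ratio on $G_1G_3$, the triple ratios of the two old triangles, and again the four side double ratios. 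These are precisely the cluster-algebra exchange relations / gluing formulas of Fock--Goncharov; the content of the proof is to write them down explicitly (using the $\wedge$-product definitions and a Pl\"ucker-type identity such as $f_1^{a}\wedge f_3^{b} \wedge f_2 \cdot (\cdots) + (\cdots) = f_1^{a}\wedge f_3^{b}\wedge f_4 \cdot(\cdots)$ expanded in a basis adapted to $G_1,G_3$) and to check that every coefficient that appears is positive. Once such subtraction-free formulas are in hand, positivity of $\phi_\lambda$ immediately forces positivity of $\phi_{\lambda'}$, and vice versa by symmetry of the flip.

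The main obstacle I anticipate is purely computational bookkeeping: choosing compatible representative vectors $f_i^\alpha \in \wedge^\alpha G_i^{(\alpha)}$ for the five flags involved, decomposing the relevant wedge products in a convenient basis, and matching terms so that the exchange relation comes out manifestly subtraction-free rather than merely rational. The genericity hypothesis is what guarantees all the denominators that appear are nonzero, and it must be invoked at each step. I would organize this by first treating the lowest nontrivial case to see the pattern, then writing the general formula; alternatively, one can shortcut the entire computation by appealing directly to the cluster structure established in \cite{FG06}, in which the $D^a$ and $T^{abc}$ are cluster variables (or monomials therein) attached to the quiver associated to a triangulation, and the flip is an explicit sequence of quiver mutations whose exchange relations are subtraction-free by construction; this is the route I expect the author to take, citing \cite{FG06} for the exchange relations and then observing that the positive locus is mutation-invariant.
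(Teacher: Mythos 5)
Your proposal is correct and matches the intended argument. The paper does not reprove this statement---it is cited verbatim as Theorem 9.1 of \cite{FG06}---but the cluster-mutation machinery the paper recalls in \S\ref{ssec:flips} is precisely the subtraction-free exchange-relation route you identify: the mutation rule in Definition~\ref{def:vmutation} sends $X^w$ to $X^w(1+X^v)$ or $X^w X^v/(1+X^v)$, both manifestly positive when the inputs are, and Lemma~\ref{lem:strata} realizes a diagonal flip as a composition of such mutations, so positivity of the full coordinate chart propagates across flips. Your separate treatment of reorienting a single diagonal via Equation~\ref{eqn:symmetries} (and of the preferred-vertex ambiguity for triple ratios via the cyclic relation $T^{abc}(F_1,F_2,F_3)=T^{bca}(F_2,F_3,F_1)$, cf.~Remark~\ref{rmk:tripleratio}) is the small additional point that the quiver-mutation picture leaves implicit, and you handle it correctly; together with connectivity of the flip graph on triangulations of a convex polygon, this closes the argument.
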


Equip $\bdry \wt S$ with the cyclic counter-clockwise order given by the orientation on $S$.

\begin{thm}[Theorem 1.8 \cite{FG06}, see also Lemma 8.4.2 \cite{LabMcS09}]\label{thm:poslimmap} Let $\rho$ be a Hitchin representation with limit map $\xi_\rho$.
If $(x_1,\dots, x_k)$ appear in this cyclic counter-clockwise order along $\bdry \wt S$, then 
\[
(\xi_\rho(x_1),\dots,\xi_\rho(x_k))\in\sf {F}_m^{(k+)}.
\]
\end{thm}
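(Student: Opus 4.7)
The plan is to combine the continuity of the limit map with a connectedness argument on $\sf{Hit}_m(S)$, reducing the problem to a single explicit computation on the Fuchsian locus. By Theorem \ref{thm:posflags}, the sign of the coordinates of $\phi_\lambda(\xi_\rho(x_1),\dots,\xi_\rho(x_k))$ does not depend on the triangulation $\lambda$, so it suffices to fix one convenient triangulation and check that every triple ratio and every double ratio is positive.

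First, I would show that for a cyclically ordered tuple $(x_1,\dots,x_k)$ in $\bdry \wt S$ and a fixed triangulation $\lambda$, every coordinate of $\phi_\lambda(\xi_\rho(x_1),\dots,\xi_\rho(x_k))$ is a continuous, never-vanishing function of $\rho\in\sf{Hit}_m(S)$. Continuity follows from Theorem \ref{thm:flagmap}: the limit map $\xi_\rho$ depends continuously on $\rho$, hence so do the wedge products appearing in the definitions of $T^{abc}$ and $D^a$. Non-vanishing follows from genericity: for pairwise distinct points $x_i$, the tuple $(\xi_\rho(x_1),\dots,\xi_\rho(x_k))$ is generic by Theorem \ref{thm:flagmap}, and the wedge products in the numerators and denominators of the triple and double ratios are exactly the ones forced to be non-zero by the genericity condition. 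Since $\sf{Hit}_m(S)\cong\bb R^{(m^2-1)(2g-2)}$ is connected, each coordinate therefore has constant sign across the entire Hitchin component.

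Next, I would pin down the sign by evaluating at a single point of the Teichm\"uller locus $\cal T(S)\subset\sf{Hit}_m(S)$. For a Fuchsian representation $\rho_0$, the limit map $\xi_{\rho_0}\colon\bdry\wt S\to\sf F_m(\bb R)$ factors as the composition of the Fuchsian boundary map $\bdry\wt S\to \bb{RP}^1$ with the osculating flag map of the rational normal curve coming from the irreducible embedding $\rm{PSL}_2(\bb R)\hookrightarrow\rm{PSL}_m(\bb R)$. Choosing affine coordinates on $\bb{RP}^1$ so that $\xi_{\rho_0}(x)$ is the flag spanned by the first $a$ derivatives of the Veronese curve $t\mapsto (1,t,t^2,\dots,t^{m-1})$ evaluated at $t=t(x)$, the triple and double ratios become explicit rational functions of the coordinates $t_i=t(x_i)$. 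A direct computation, using $\wedge^a$ evaluated on Vandermonde-type determinants, shows that each $T^{abc}(F_1,F_2,F_3)$ and each $D^a(F_1,\dots,F_4)$ is positive exactly when the $t_i$'s appear in the correct cyclic order, which is precisely the cyclic counter-clockwise order on $\bdry\wt S$.

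Combining the two steps: each coordinate of $\phi_\lambda$ is continuous, nowhere zero, and positive somewhere on the connected space $\sf{Hit}_m(S)$, hence positive everywhere. By Theorem \ref{thm:posflags} this conclusion is independent of the triangulation $\lambda$, so $(\xi_\rho(x_1),\dots,\xi_\rho(x_k))\in\sf F_m^{(k+)}$. The main obstacle is the explicit Veronese/Vandermonde calculation on the Fuchsian locus; this is routine but needs care to identify the signs produced by the alternating sign conventions in the definitions of $T^{abc}$ and $D^a$ (especially the minus sign appearing in the definition of $D^a$) with the cyclic ordering of the $t_i$'s.
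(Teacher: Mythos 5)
This statement is not proved in the paper at all: it is quoted as background, with the proof deferred to Theorem 1.8 of \cite{FG06} and Lemma 8.4.2 of \cite{LabMcS09}. So there is no in-paper argument to compare against; your proposal has to be judged on its own. Its architecture --- fix a triangulation, show each coordinate of $\phi_\lambda(\xi_\rho(x_1),\dots,\xi_\rho(x_k))$ is continuous and nowhere zero in $\rho$, use connectedness of $\sf{Hit}_m(S)$ to propagate the sign, and compute the sign on the Fuchsian locus via the Veronese (osculating flag) curve --- is exactly the standard route, and is essentially the Labourie--McShane argument. The non-vanishing step is fine: every wedge product appearing in $T^{abc}$ and $D^a$ involves exponents summing to $m$, so genericity of the relevant sub-tuples (Theorem \ref{thm:flagmap}) forces it to be non-zero. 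The Fuchsian computation also lands where you expect: for the osculating flags of the rational normal curve all triple ratios equal $1$ and the double ratios reduce to cross-ratios of the parameters $t_i$ on $\bb{RP}^1$, positive precisely for the correct cyclic order (consistent with the paper's own remark that Fuchsian representations have $\tau^{abc}=0$).

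The one step you should not wave through is the continuity of $\rho\mapsto\xi_\rho$. Theorem \ref{thm:flagmap} as stated gives, for each fixed $\rho$, the existence of a limit map with certain properties; it says nothing about dependence on $\rho$, and the map is moreover only defined up to the $\rm{PGL}_m(\bb R)$-action. Continuity of Anosov limit maps in the representation is a genuine theorem (it comes from structural stability of the Anosov property, cf.\ \cite{Lab06} and Guichard--Wienhard), and the projective ambiguity is harmless only because $T^{abc}$ and $D^a$ are $\rm{PGL}_m(\bb R)$-invariant, so the coordinates of $\phi_\lambda$ descend to well-defined continuous functions on $\sf{Hit}_m(S)$. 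With that input made explicit, your argument is complete; without it, the connectedness step has nothing to act on.
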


We refer to $\sf {F}_m^{(k+)}$ as the space of \emph{positive} $k$-tuple of flags. In light of Theorem \ref{thm:poslimmap}, we say that the limit map a Hitchin representation is \emph{positive}.

\subsection{Moduli spaces of flags: diagonal flips}\label{ssec:flips}

Theorem \ref{thm:posflags} states that the space of positive $k$-tuples of flags is well-defined. However, the Fock-Goncharov parameters of a $k$-tuple in $\sf {F}_m^{(k+)}$ depends on the choice of triangulation $\lambda$. In this section, we recall the description of a special family of changes of coordinates for $\sf {F}_m^{(k+)}$, called \emph{diagonal flips}, that is relevant for our purposes.

Following Notation \ref{not:vertices}, an oriented diagonal $e$ in $\lambda$ singles out a cyclically ordered quadruple $(x_{e^+}, x_{e^l},x_{e^-},x_{e^r})$ of vertices of $\Pi$. A triangulation $\kappa$ \emph{differs from $\lambda$ by a diagonal flip at $e$} if $f\in\kappa$ is an oriented diagonal with 
\[
(f^+,f^l,f^-,f^r)=(e^l,e^-,e^r,e^+).
\]
and $\kappa$ is equal to $\lambda$ otherwise. See Figure \ref{fig:flip}. 
\begin{figure}[h]
\centering
        \includegraphics[scale=.5]{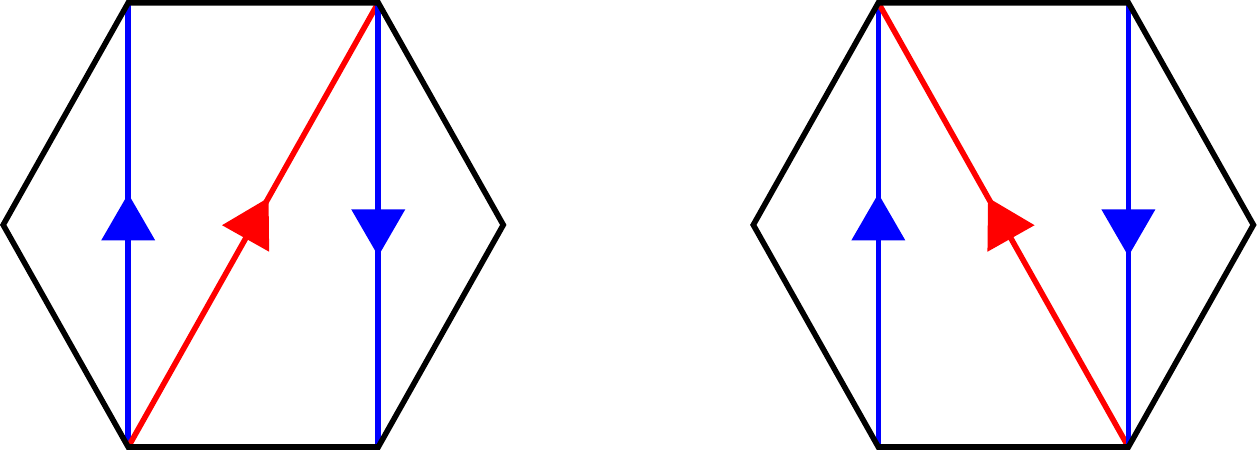}  
        \put (-440, 65){\makebox[0.7\textwidth][r]{\small{$x_{e^+}$}}}
        \put (-493, 65){\makebox[0.7\textwidth][r]{\small{$x_{e^l}$}}}
    	\put (-485, -5){\makebox[0.7\textwidth][r]{\small{$x_{e^-}$}}}
       	\put (-442, -5){\makebox[0.7\textwidth][r]{\small{$x_{e^r}$}}}
	\put (-466, 50){\makebox[0.7\textwidth][r]{\small{$e$}}}
	\put (-370,50){\makebox[0.7\textwidth][r]{\small{$f$}}}
	\put (-385, 65){\makebox[0.7\textwidth][r]{\small{$x_{f^+}$}}}
        \put (-385, -5){\makebox[0.7\textwidth][r]{\small{$x_{f^l}$}}}
    	\put (-330, -5){\makebox[0.7\textwidth][r]{\small{$x_{f^-}$}}}
       	\put (-330, 65){\makebox[0.7\textwidth][r]{\small{$x_{f^r}$}}}
        \caption{Two triangulations of the hexagon that differ by a diagonal flip at $e$.}
        \label{fig:flip}
\end{figure}

We describe the change of the Fock-Goncharov parameters of a positive $k$-tuple of flags under a diagonal flip. More precisely, fix $(F_1,\dots, F_k)\in\sf F^{(k+)}_m$.  Let $f$ be the diagonal in $\kappa$ resulting from diagonal flip of the triangulation $\lambda$ at a diagonal $e$. Our goal for the reminder of this section is to express
\begin{gather*}
D^a(F_{f^+},F_{f^l},F_{f^-},F_{f^r}),\ \ T^{abc}(F_{f^+},F_{f^l},F_{f^-}),\ \ T^{abc}(F_{f^+},F_{f^-},F_{f^r}).
\end{gather*} 
in terms of 
\begin{gather*}
D^a(F_{e^+},F_{e^l},F_{e^-},F_{e^r}),\ \ T^{abc}(F_{e^+},F_{e^l},F_{e^-}),\ \ T^{abc}(F_{e^+},F_{e^-},F_{e^r}).
\end{gather*}
Let us start by considering the case of a positive quadruple $(F_1,F_2,F_3,F_4)$ corresponding to a quadrilateral with vertices $(x_1,x_2,x_3,x_4)$. Let $\lambda=\{e\}$ be the triangulation given by the choice of the diagonal from $x_3$ to $x_1$, and let $\kappa=\{f\}$ be the triangulation given by the choice of the diagonal from $x_4$ to $x_2$. Fock and Goncharov \cite[\S 9]{FG06} describe relations between the parameters defined via $\lambda$ and $\kappa$ by applying cluster algebras methods that we now recall. 

Consider the set
\[
\cal Q_0:=\left\{(a,b,c,d)\in\bb Z^4_{\geq 0}\,\middle\vert\,\begin{array}{c} (a,b,c,d)=(a,b,c,0),\ a+b+c=m\\
\text{or}\\
 (a,b,c,d)=(a,b,0,d),\ a+b+d=m\\
\end{array}\right\}.
\]
One can think of $\cal Q_0$ as the parallelogram obtained from two copies of the discrete triangle 
\[
\{(a,b,c)\in\bb Z^4_{\geq 0}\colon a+b+c=m\}
\] 
glued along the edge composed by the vertices $v=(a,b,0,0)$. An \emph{interior vertex of $\cal Q_0$} is a vertex $(a,b,c,d)$ with $a,b\in \bb Z_{>0}$. Denote by $\rm{int}(\cal Q_0)$ the set of interior vertices of $\cal Q_0$. See Figure \ref{fig:pi}.

\begin{figure}[h]
\begin{minipage}{0.50\textwidth}
        \centering
        \includegraphics[scale=.4]{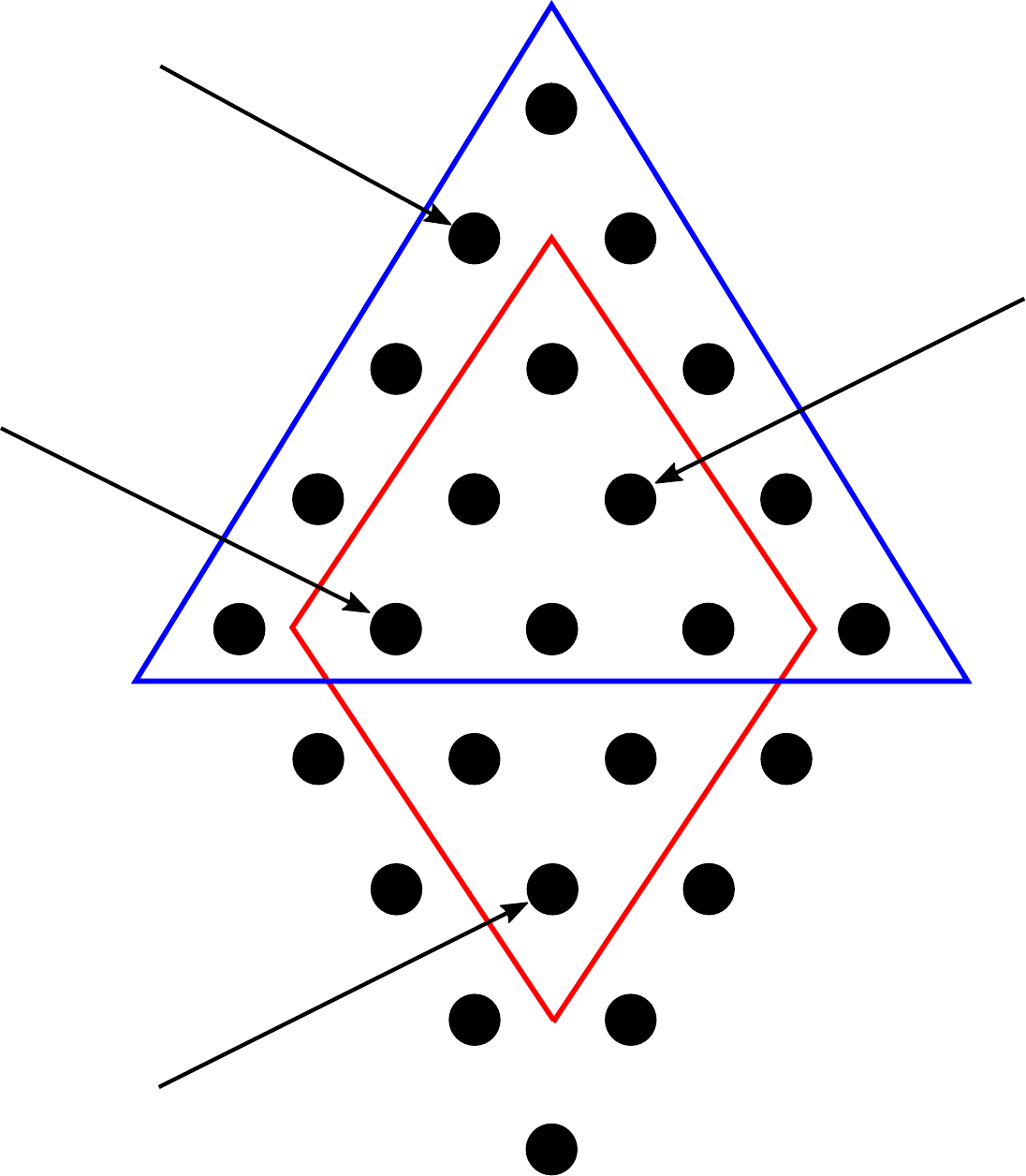}
	\put (-270, 95){\makebox[0.7\textwidth][r]{\tiny{$(1,3,0,0)$}}}
	\put (-250,140){\makebox[0.7\textwidth][r]{\tiny{$(0,1,3,0)$}}}
	\put (-150,110){\makebox[0.7\textwidth][r]{\tiny{$(2,1,1,0)$}}}
	\put (-260,5){\makebox[0.7\textwidth][r]{\tiny{$(1,1,0,2)$}}}
\end{minipage}\hfill
\begin{minipage}{0.50\textwidth}
        \centering
        \includegraphics[scale=.4]{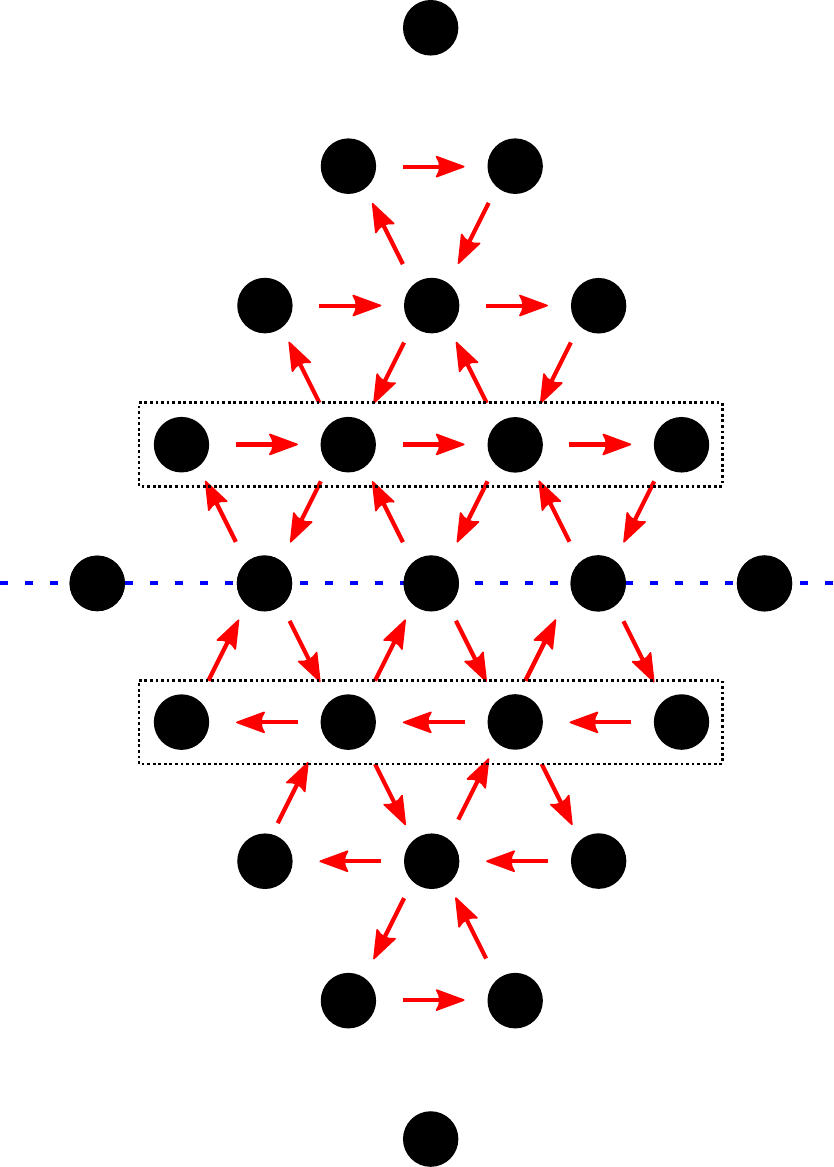}
\end{minipage}
        \caption{On the left hand side, the set $\cal Q_0$ for $m=4$. Highlighted vertices of the form $(a,b,c,0)$ and the interior of $\cal Q_0$. On the right hand side, the directed graph $\cal Q_0(e)$ for $m=4$. The dotted line strikes through the vertices $(a,b,0,0)$ with $a,b\geq 0$. Highlighted the vertices $v=(a,b,c,d)$ with $c+d=1$.}
        \label{fig:pi}
\end{figure}

The choice of $\lambda$ allows us to define double and triple ratios for $(F_1,F_2,F_3,F_4)=(F_{e^+},F_{e^l},F_{e^-},F_{e^r})$. These functions assign positive weights to the interior vertices $v\in\text{Int}(\cal Q_0)$ by setting
\[
X^v=
\begin{cases}
T^{abc}(F_{e^+},F_{e^l},F_{e^-}) &\text{ if }v=(a,b,c,0),\, c\neq 0\\\
T^{abd}(F_{e^+},F_{e^-},F_{e^r}) &\text{ if }v=(a,b,0,d),\, d\neq 0\\ 
D^a(F_{e^+},F_{e^l},F_{e^-},F_{e^r}) &\text{  if }v=(a,b,0,0)
\end{cases}
\]

Consider oriented arrows connecting vertices in $\cal Q_0$ as in the left hand side of Figure \ref{fig:mutation}. Let us denote by $\cal Q_0(e)$ the resulting weighted directed graph. We now describe how to obtain the weighted directed graph $\cal Q_0(f)$ defined by $\kappa$ starting from $\cal Q_0(e)$.
\begin{defin}\label{def:vmutation}
Given a vertex $v\in \rm{int}(\cal Q_0)$, the \emph{mutation} $\mu_v$ is an operation on the weighted directed graph $\cal Q_0(e)$ described by the following sequence of steps. 
\begin{enumerate}[]
	\item \textbf{Step 1:} Do not change the set of vertices $\cal Q_0$.
	\item \textbf{Step 2:} Change the weights according to the rule
	$
\mu_v(X^w)= 
\begin{cases}
1/X^v&\text{ if }v=w\\
X^w(1+X^v) &\text{ if }v\to w\\
X^w\frac{X^v}{(1+X^v)} &\text{ if }w\to v\\
X^w &\text{ otherwise}
\end{cases}
$
	\item \textbf{Step 3:} Change arrows according to Figure \ref{fig:mutation}.
	\item \textbf{Step 4:} Erase any 2-loop, i.e. paths of the form $v\to w\to v$, resulting from Step 3.
\end{enumerate}
\end{defin}

\begin{figure}[h]
\includegraphics[scale=.2]{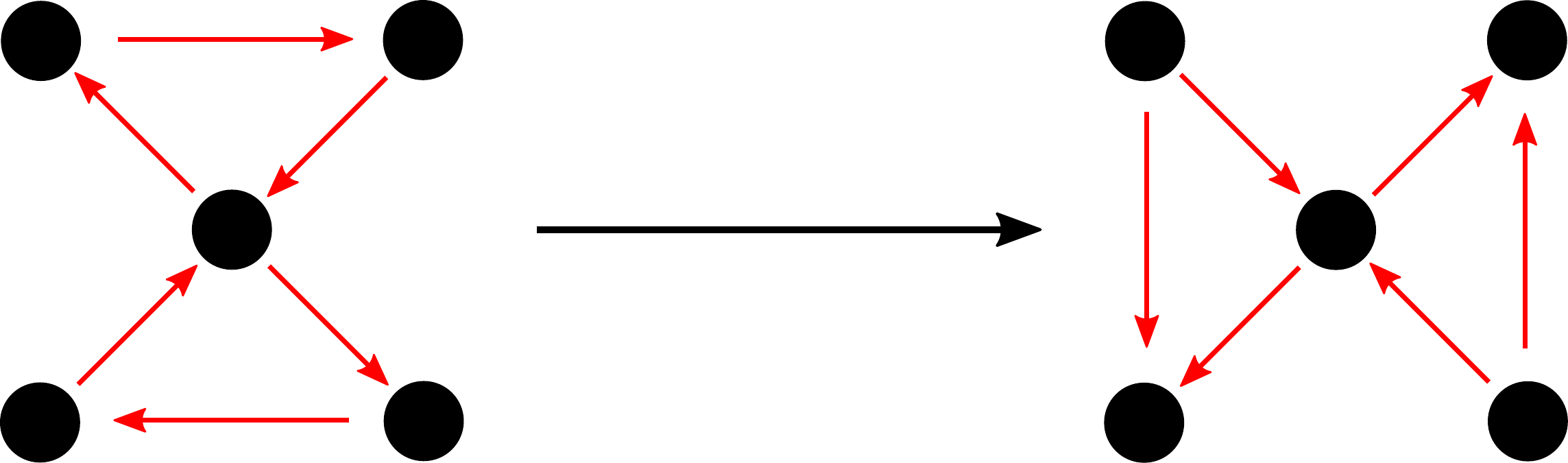} 
\put (-448, 18){\makebox[0.7\textwidth][r]{\tiny{$v=(a,b,c,0)$}}}
\put (-460, -5){\makebox[0.7\textwidth][r]{\tiny{$(a,b+1,c-1,0)$}}}
\put (-460, 40){\makebox[0.7\textwidth][r]{\tiny{$(a+1,b,c-1,0)$}}}
\put (-390,24){\makebox[0.7\textwidth][r]{\small{$\mu_v$}}}
\put (-347, 25){\makebox[0.7\textwidth][r]{\small{$v$}}}
\caption{The change of arrows for a mutation $\mu_v$.}
\label{fig:mutation}
\end{figure}

The diagonal flip from $\cal Q_0(e)$ to $\cal Q_0(f)$ is the result of an explicit composition of mutations. 

For $s=0,1,\dots, m-2$, the \emph{$s$-stratum} is the set of vertices $v=(a,b,c,d)$ such that $c+d=s$. 
An \emph{$s$-stratum mutation $\bar\mu_s$} is the composition of $v$-mutations at all the interior vertices $v$ of the $s$-stratum. Note that $\bar\mu_s$ is independent on the order of composition.

The diagonal flip is a combination of (restrictions of) strata mutations. Consider the nested sets
\[
\cal Q_{r}=\{(a,b,c,d)\in\cal Q_0\colon a,b\geq r\},\text{and } \rm{int}(\cal Q_{r})=\{(a,b,c,d)\in\cal Q_0\colon a,b>r\}
\]
for $r=1,\dots, r_{\max}$, where $r_{\max}=\lfloor \frac{m}{2}\rfloor$. See Figure \ref{fig:stratamutation1}. 

For $r=1,\dots, r_{\max}$, define iteratively the weighted directed graph $\cal Q_{r}(e)$ with
\begin{enumerate}
	\item[-] set of vertices equal to $\cal Q_{r}$,
	\item[-] arrows resulting from applying $\nu_{r-1}=\bar\mu_{m-2r}\circ\dots\circ\bar \mu_0$ to the arrows of $\cal Q_{r-1}(e)$, and then removing any arrows between two vertices not in $\rm{int}(\cal Q_{r})$,
	\item[-] weights resulting from applying $\nu_{r-1}$ to the weights of $\cal Q_{r-1}(e)$.
\end{enumerate} 
When clear from context, we blur the distinction between $\nu_r$ and its trivial extension to $\cal Q_0(e)$.

\begin{figure}[h]
\includegraphics[scale=.35]{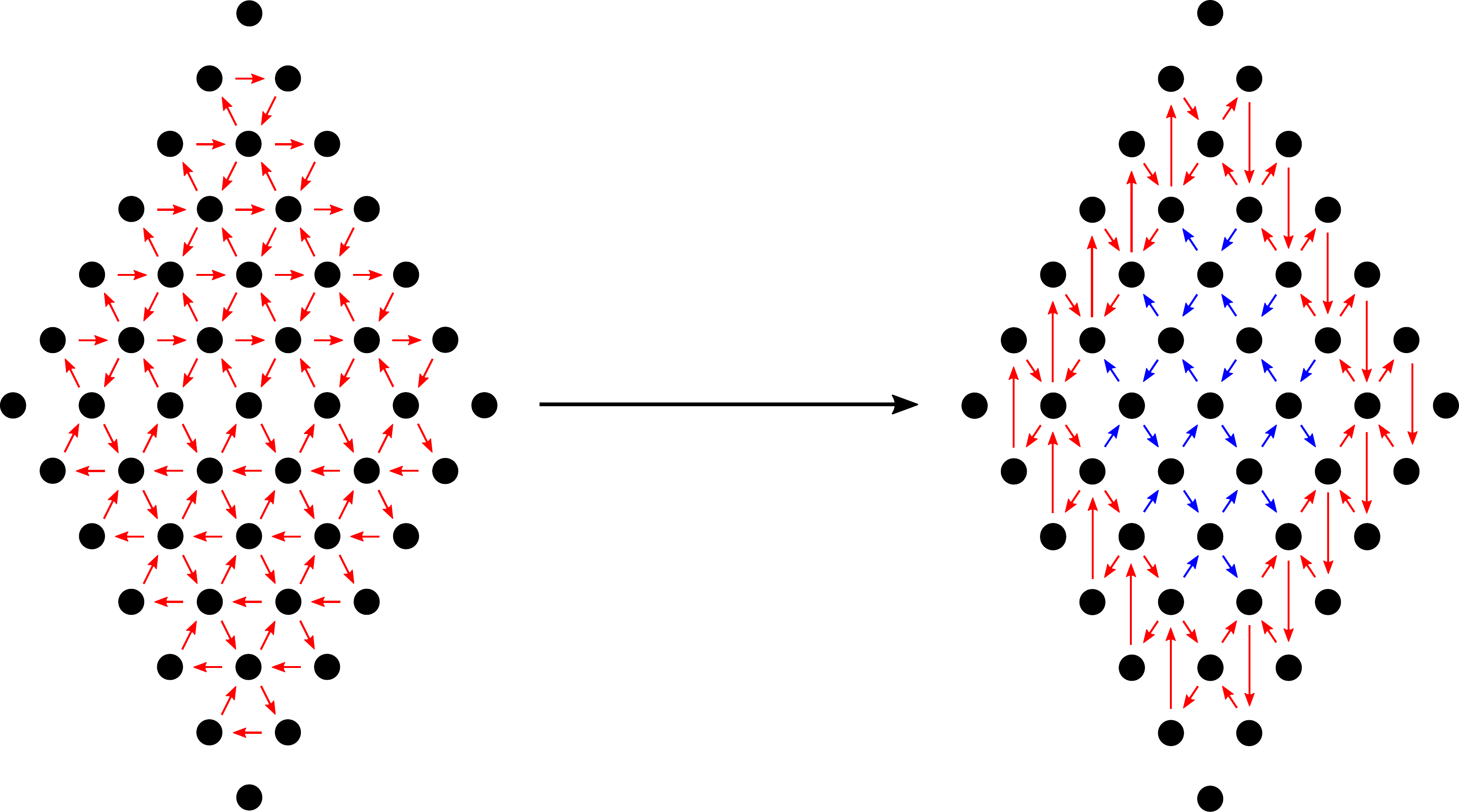}
\put (-480, 90){\makebox[0.7\textwidth][r]{\small{$\nu_0$}}}
\caption{The result of $\nu_{0}$ applied to $\cal Q_{0}(e)$, with $m=6$. On the right, the arrows of the directed graph $\cal Q_{1}(e)$ are highlighted.}
\label{fig:stratamutation1}
\end{figure}
\begin{lem}[\S 10 of \cite{FG06}]\label{lem:strata} With the notations above, the weighted directed graph $\cal Q_0(f)$ is obtained by applying $\nu_{r_{\max}-1}\circ\dots\circ\nu_{0}$ to $\cal Q_0(e)$.
\end{lem}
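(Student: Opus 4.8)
The plan is to derive the statement from the cluster-algebraic description of the moduli space $\sf F_m^{(4+)}$ of positive quadruples of flags. First I would recall from \cite{FG06} that $\cal Q_0(e)$ is the Fock--Goncharov seed associated to the triangulation $\lambda=\{e\}$ of the quadrilateral with vertices $(x_1,x_2,x_3,x_4)$: the triple and double ratios of $(F_{e^+},F_{e^l},F_{e^-},F_{e^r})$ form a system of cluster $\cal X$-coordinates on $\sf F_m^{(4+)}$ whose exchange combinatorics are recorded by the arrows of $\cal Q_0(e)$, and the same holds for $\cal Q_0(f)$ and $\kappa=\{f\}$ once the vertices of $\cal Q_0$ are relabelled by the role they play for the flipped diagonal. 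Granting this, and the general principle (again from \cite{FG06}) that a flip of an ideal triangulation of the disk-with-marked-points induces a composition of seed mutations, the lemma reduces to the purely combinatorial-plus-rational assertion that the particular sequence $\nu_{r_{\max}-1}\circ\dots\circ\nu_0$ realizes this flip, i.e.\ that the arrow rule of Figure \ref{fig:mutation} together with Step 2 of Definition \ref{def:vmutation} sends $\cal Q_0(e)$ to $\cal Q_0(f)$.

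I would prove this last assertion by induction on $m$, using the nested subsets $\cal Q_r=\{(a,b,c,d)\in\cal Q_0:a,b\geq r\}$. The key point is that the shift $(a,b,c,d)\mapsto(a-r,b-r,c,d)$ identifies $\cal Q_r$ with the analogous parallelogram for $\rm{PGL}_{m-2r}$; under it, the weights on the interior vertices of $\cal Q_r(e)$ become the triple and double ratios of the corresponding subquotient configuration of flags, and one checks directly that $\nu_{r-1}$ for $\rm{PGL}_m$ coincides with $\nu_0$ for $\rm{PGL}_{m-2r+2}$. Hence the inner part of the composition, $\nu_{r_{\max}-1}\circ\dots\circ\nu_1$ restricted to $\cal Q_1$, is exactly the flip sequence for $\rm{PGL}_{m-2}$ and is handled by the inductive hypothesis; the burden falls on the single outermost layer $\nu_0=\bar\mu_{m-2}\circ\dots\circ\bar\mu_0$. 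For this step one must show that applying the strata mutations $\bar\mu_0,\bar\mu_1,\dots,\bar\mu_{m-2}$ in order to $\cal Q_0(e)$ produces, on $\rm{int}(\cal Q_1)$, a weighted subgraph which under the shift is the seed $\cal Q_0(e)$ for $\rm{PGL}_{m-2}$ (so that the induction can be invoked), and, on the complement, exactly the arrows and weights of $\cal Q_0(f)$ for $\rm{PGL}_m$ outside $\rm{int}(\cal Q_1)$, with the boundary vertices $(a,b,0,0)$ and the two outer edges now carrying the double and triple ratios of $(F_{f^+},F_{f^l},F_{f^-},F_{f^r})$. The base cases $m=2$ (a single mutation at $(1,1,0,0)$, which is the classical cross-ratio identity) and $m=3$ are checked by hand.

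The main obstacle is precisely the bookkeeping in that outermost step: following how the arrows incident to the outer vertices of $\cal Q_0$ evolve through the composition of $m-1$ strata mutations, tracking the $2$-loop cancellations of Step 4 of Definition \ref{def:vmutation}, and verifying at each intermediate stage that every mutation $\mu_v$ is performed at a vertex still interior to the current graph and carrying the local arrow picture of Figure \ref{fig:mutation}, so that it is a legitimate mutation with the stated effect on weights. It seems cleanest to organize this so that each $\bar\mu_s$ is seen to move the outer edge inward by one unit, with the last mutation $\bar\mu_{m-2}$ responsible for the reconfiguration of the double-ratio vertices; the interaction between the inner block $\rm{int}(\cal Q_1)$ and the outer block is what makes the arrow-deletion conventions essential. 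A purely computational alternative would bypass the cluster framework altogether, expanding both sides with the wedge-product formulas for $T^{abc}$ and $D^a$ and the genericity relations underlying Theorem \ref{thm:posflags}; this replaces the appeal to the flip-equals-mutation-sequence principle by a much longer direct calculation, so I would favour the inductive argument above.
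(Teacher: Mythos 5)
The paper does not prove Lemma~\ref{lem:strata}: it is stated as a direct citation to \cite[\S 10]{FG06} and used as a black box. So there is no internal proof for your attempt to be compared against.

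That said, your reconstruction is a faithful outline of what Fock and Goncharov actually do. The key mechanism you identify — that the shift $(a,b,c,d)\mapsto(a-r,b-r,c,d)$ carries $\cal Q_r$ onto the parallelogram $\cal Q_0$ for $\rm{PGL}_{m-2r}$, that under this identification $\nu_{r-1}$ for $\rm{PGL}_m$ coincides with $\nu_0$ for $\rm{PGL}_{m-2r+2}$, and that the flip therefore telescopes by induction on $m$ — is precisely the structure of the argument in \cite[\S 10]{FG06}, and your indexing checks out (e.g.\ for $m=2$ one has $r_{\max}=1$, $\rm{int}(\cal Q_0)=\{(1,1,0,0)\}$, and $\nu_0=\bar\mu_0$ is a single mutation realizing the cross-ratio/Ptolemy flip). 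The framing of the flip as a composition of $\cal X$-seed mutations in the cluster variety attached to the disk with four marked points is also the correct context. The one genuine gap you flag yourself — explicit verification that each $\mu_v$ in $\nu_0$ is performed at a vertex whose local arrow neighborhood matches Figure~\ref{fig:mutation}, tracking the $2$-loop cancellations of Step~4, and confirming that after $\nu_0$ the complement of $\rm{int}(\cal Q_1)$ carries the arrows and weights of $\cal Q_0(f)$ — is exactly where all the real work lies, and your plan defers it rather than carrying it out; but it is a finite combinatorial check and is done in the cited source. In short: your proposal is not an alternative route but a correct expansion of the citation the paper leans on, missing only the combinatorial bookkeeping it itself identifies as the burden.
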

\begin{remark}\label{rmk:generalflip} Let us generalize the above discussion to the case of a positive $k$-tuple of flags $(F_1,\dots, F_k)$. Assume we have fixed a triangulation $\lambda$ of the polygon $\Pi$ with cyclically ordered vertices $(x_1,\dots,  x_k)$. 

Suppose the flags $F_{e^+},F_{e^l},F_{e^-},F_{e^r}$ correspond to a quadrilateral in $\lambda$ with diagonal $e$ oriented from $x_{e^-}$ to $x_{e^+}$. Then, at least one of the edges of this quadrilateral corresponds to a different diagonal $\bar e$ with double ratios $D^a(\bar e)$, for $a=1,\dots, m-1$.

Let $\kappa$ be the triangulation which differs from $\lambda$ by a diagonal flip at $e$. Note that the $a$th double ratio with respect to $\kappa$ of the diagonal $\bar e$ is, in general, different from $D^a(\bar e)$. This change of coordinates, however, is taken into account by adding weights to the vertices of the edge of $\cal Q_0(e)$ which correspond to $\bar e$. 

For example, assume $\bar e$ is the diagonal from $x_{e^l}$ to $x_{e^+}$. Then, we add weights to $\cal Q_0(e)$ by setting
\[
X^{v}=D^a(\bar e),\text{ for }v=(a,0,m-a,0),\ a=1,\dots, m-1.
\]
Then, after performing the mutations described in Lemma \ref{lem:strata}, the resulting weights for the vertices $v=(a,0,m-a,0)$, and $a=1,\dots, m-1$ coincide with the Fock-Goncharov parameters of the diagonal $\bar e$ with respect to the triangulation $\kappa$.
\end{remark}

\subsection{Parametrization of the Hitchin component}\label{ssec:coords}

In this section we recall Bonahon and Dreyer's parametrization of the Hitchin components in the special case of \emph{maximal geodesic laminations extending a pants decomposition}. We refer to \cite{BoDr14,BoDr17} for the general case, which is, however, not needed for this paper.

\subsubsection{Maximal geodesic laminations extending a pants decomposition}

In the context of Hitchin representations, the choice of topological data described in \S \ref{ssec:flags} is replaced by a choice of a \emph{maximal geodesic lamination}. Maximal geodesic laminations are topological objects (see for example \cite{Thu78,Bon96,PH92}), however it is convenient to endow $S$ with an auxiliary hyperbolic metric when defining them.

We start by fixing once and for all a pants decomposition $\cal P$ of the surface $S$. This gives a collection $\lambda_c=\{c_1,\dots, c_{3g-3}\}$ of closed geodesics on $S$, called \emph{closed leaves}. We equip the $c_i$'s with arbitrary orientations.

A \emph{maximal geodesic lamination $\lambda_{P}$ for $P\in\cal P$} is a set $\lambda_{P}=\{e_1,e_2,e_3\}$ of pairwise non-intersecting, simple, oriented, bi-infinite geodesic which spiral around boundary components of $P$. Denote by $\lambda_o=\cup_{P\in\cal P}\lambda_{P}$. We refer to $e\in\lambda_o$ as an \emph{open leaf}.

The union $\lambda=\lambda_c\cup\lambda_o$ is a \emph{maximal geodesic lamination of $S$ (extending $\cal P$)}. Denote by $\cal T_{\lambda,P}$ the set of connected components of $P-\lambda_{P}$ for $P\in\cal P$, and by $\cal T_\lambda$ the union $\cup_{P\in \cal P}\cal T_{\lambda, P}$. Note that $t\in\cal T_\lambda$ is an ideal triangle.

\begin{example}[Standard maximal geodesic lamination]\label{ex:prefmaxgeo}
Let us describe the \emph{standard maximal geodesic lamination $\kappa$}, which is a maximal geodesic lamination well-adapted to our purposes. Consider a pair of pants $P\in \cal P$, with boundary geodesics $g_1,g_2,g_3$ oriented so that $P$ lies to their left. For $i\in\bb Z_3$, let $e_i$ be a simple, bi-infinite geodesic spiraling around $g_{i-1}$, and $g_{i+1}$ in the direction \emph{opposite} to the orientation of $g_{i-1}$ and $g_{i+1}$, and oriented towards $g_{i+1}$. Set $\kappa_{P}=\{e_1,e_2,e_3\}$ and denote by $\kappa$ the resulting maximal geodesic lamination, which we sketch in Figure \ref{fig:stdgeod}.
\end{example}

\begin{figure}[h]
\includegraphics[scale=.4]{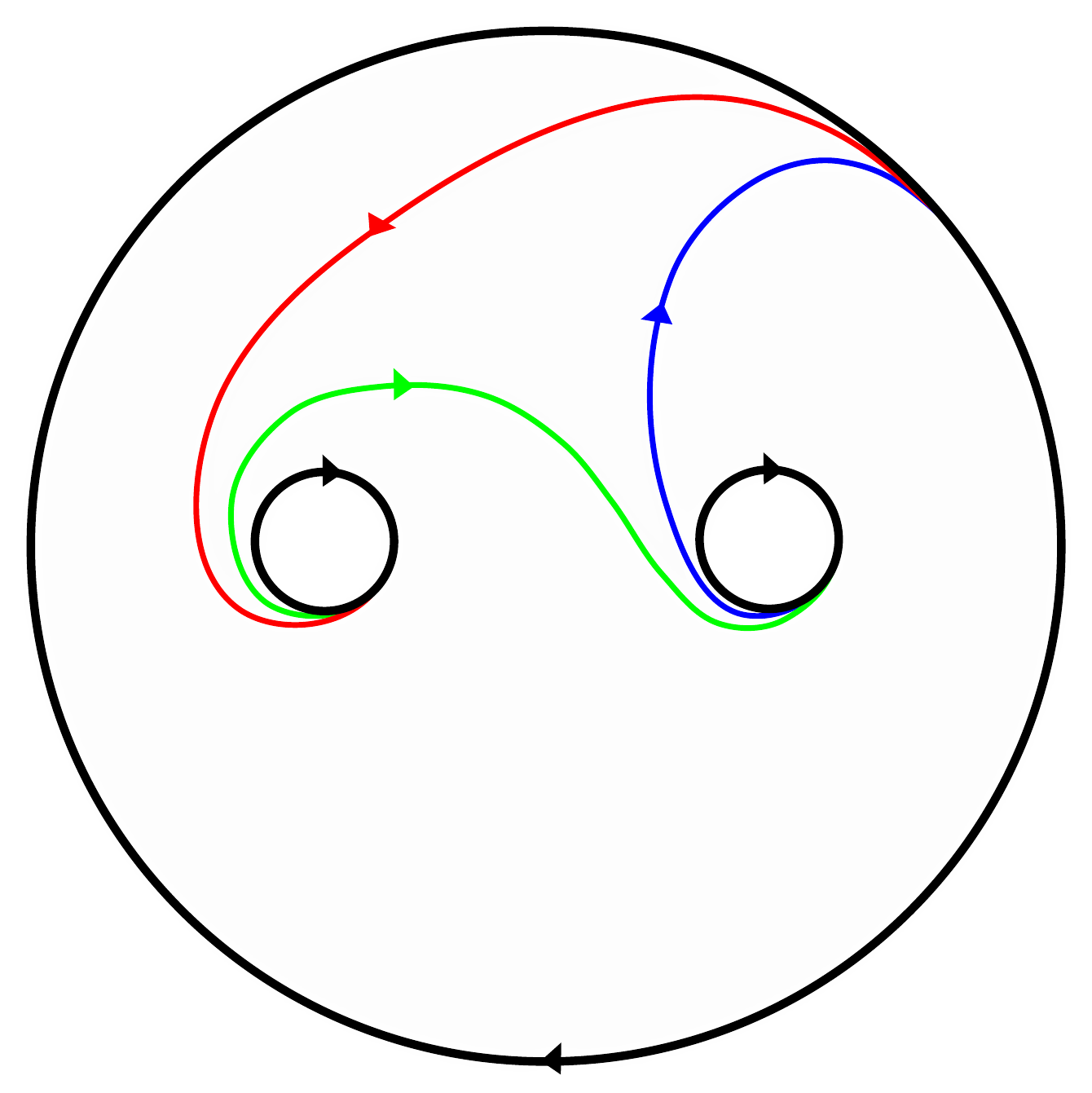}
\put (-395, 10){\makebox[0.7\textwidth][r]{\small{$g_3$}}}
\put (-370,73){\makebox[0.7\textwidth][r]{\small{$g_2$}}}
\put (-433,73){\makebox[0.7\textwidth][r]{\small{$g_1$}}}
\put (-405, 90){\makebox[0.7\textwidth][r]{\small{$e_3$}}}
\put (-380,105){\makebox[0.7\textwidth][r]{\small{$e_1$}}}
\put (-435,122){\makebox[0.7\textwidth][r]{\small{$e_2$}}}
\caption{The standard geodesic lamination of a pair of pants $P$ (shaded) described in Example \ref{ex:prefmaxgeo}.}
\label{fig:stdgeod}
\end{figure}

Given a maximal geodesic lamination $\lambda$, in order to parametrize the Hitchin component, one needs to further fix a collection $\cal K=\cal K(\cal P,\lambda)=\{k_1,\dots, k_{3g-3}\}$ of geodesic arcs $k_i$ with endpoints in $S-\lambda$ that intersect transversely and exactly once the closed leaf $c_i\in\lambda_c$. We will refer to the pair $(\lambda,\cal K)$ as a maximal geodesic lamination and drop $\cal K$ from the notation if clear from context.

Note that given the standard maximal geodesic lamination $\kappa_P$ of a pair pants $P$, we can replace $e_i\in \kappa_P$ with a bi-infinite geodesic with both ends which spiral around $g_i$. This gives a new maximal geodesic lamination $\lambda_P$ of $P$. We refer to this operation and its inverse
as \emph{(generalized) diagonal flips}. The analogy with \S \ref{ssec:flips} is clarified by lifting the picture to the universal cover.

\begin{notation} It will be useful to lift a maximal geodesic lamination $\lambda$ on $S$ to the universal cover $\wt S$. We will refer to lifts of $\lambda$, $\lambda_c$, $\cal T_\lambda$, $e$, etc. as $\wt\lambda$, $\wt\lambda_c$, $\wt{\cal T}_\lambda$, $\wt e$, etc., respectively.
\end{notation}

\subsubsection{Bonahon-Dreyer's paramatrization} We are now ready to describe the Bonahon-Dreyer coordinates of a Hitchin representation $\rho$ with respect to a maximal geodesic lamination $\lambda$ extending the pants decomposition $\cal P$. There are two types of parameters: the triangle parameters, and the shear parameters along a leaf.

Consider one of the pairs of pants $P\in\cal P$, and the corresponding sets $\lambda_{P}=\{e_1,e_2,e_3\}$, and $\cal T_{\lambda, P}=\{t,t'\}$. Choose two adjacent lifts $\wt t$ and $\wt t'$ sharing an oriented edge $\wt e$. This defines an ideal quadrilateral with diagonal $\wt e$. Following Notation \ref{not:vertices}, we denote the cyclically ordered vertices of this quadrilateral by $x_{e^+},x_{e^l},x_{e^-},x_{e^r}$. Up to relabeling, $\wt t$ has ideal vertices $x_{e^+},x_{e^l},x_{e^-}$, $\wt t'$ has ideal vertices $x_{e^+},x_{e^-},x_{e^r}$. 

Let $\rho\in\sf {Hit}_m(S)$ be a Hitchin representation, and recall that its limit map $\xi_\rho$ is \emph{positive} by Theorem \ref{thm:posflags}. For $a,b,c\in\bb Z_{>0}$ such that $a+b+c=m$, the \emph{$abc$-triangle parameter} of the triangle $t\in\cal T_{\lambda}$ is
 \[
\tau^{abc}(\rho,t):=\log T^{abc}(\xi_\rho(x_{e^+}),\xi_\rho(x_{e^l}),\xi_\rho(x_{e^-}))
\]
\begin{remark}\label{rmk:triangleinv} (Compare to Remark \ref{rmk:tripleratio}).
Note that the triangle parameters of an ideal triangle depend on a choice of a vertex. This ambiguity leads to the \emph{rotation conditions} when parametrizing the Hitchin component \cite[\S 5.1]{BoDr14}. On the other hand, the triangle parameters do not depend on the lift $\wt t$ of $t$ because the limit map is $\rho$-equivariant and the triple ratios are constant on the $\rm {PGL}_m(\bb R)$-orbit of the triple of flags $(\xi_\rho(x_{e^+}),\xi_\rho(x_{e^l}),\xi_\rho(x_{e^r}))$.
\end{remark}

With the same notations, for $a=1,\dots,m-1$, define the \emph{$a$-shearing} of $\rho$ along an open leaf $e\in\lambda_o$ as
\[
\sigma^a(\rho,e)=\log D^a(\xi_\rho(x_{e^+}),\xi_\rho(x_{e^l}),\xi_\rho(x_{e^-}),\xi_\rho(x_{e^r})).
\]
As in Remark \ref{rmk:triangleinv}, we observe that the shearings do not depend on the choice of lift $\wt e\in\wt \lambda_o$.

One uses the small transverse arcs in $\cal K$ to define the shearing parameters along a closed leaf. Consider a closed leaf $c$ and the corresponding transverse arc $k$. Choose a lift $\wt c$, and a lift $\wt k$ which intersects $\wt c$. The endpoints of $\wt k$ single out two ideal triangles $\wt t$, and $\wt t'$ which are lifts of ideal triangles in $\cal T_\lambda$. Note that $\wt k$ intersect exactly one edge $\wt e$ (resp. $\wt e'$) of $\wt t$ (resp. $\wt t'$). Let $(x_{c^+},x_{c^l},x_{c^-},x_{c^r})$ be the four points in $\bdry \wt S$ such that $x_{c^+}$ is the attracting fixed point of $\wt c$, $x_{c^l}$, $x_{c^r}$ are vertices of $\wt t$, $\wt t'$ that do not belong to $\wt e$ or $\wt e'$, $x_{c^-}$ is the repelling fixed point of $\wt c$. Up to relabeling, we have $(x_{c^+},x_{c^l},x_{c^-},x_{c^r})$ appear in this counter-clockwise cyclic order along $\bdry \wt S$.
For $a=1,\dots, m-1$, the \emph{$a$-shearing} of $\rho$ along the closed leaf $c$ is
\[
\sigma^a(\rho,c)= \log D^a(\xi_\rho(x_{c^+}),\xi_\rho(x_{c^l}),\xi_\rho(x_{c^-}),\xi_\rho(x_{c^r})).
\]
The $\rho$-equivariance of the limit map guarantees that $\sigma^a(\rho,c)$ does not depend on the lifts of $c$ and of the small transverse arc $k$.

\begin{thm}[Theorem 2 \cite{BoDr14}]\label{thm:bdmain} The triangle and shearing parameters define a homeomorphism, depending on the maximal geodesic lamination $\lambda$, between the Hitchin component $\sf {Hit}_m(S)$ and a convex polytope in $\bb R^N$ defined by finitely many inequalities and equalities.
\end{thm}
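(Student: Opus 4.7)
The plan is to prove the theorem in four stages: continuity of the parameters, identification of the equalities cutting out the polytope, injectivity via a reconstruction argument, and surjectivity via an inductive construction of an equivariant limit map. Continuity is the easy part: each parameter $\tau^{abc}(\rho,t)$ or $\sigma^a(\rho,e)$ is the logarithm of a triple or double ratio of the limit map $\xi_\rho$, and both the limit map (continuous dependence on $\rho$ following Theorem \ref{thm:flagmap} and a standard uniqueness/density argument) and the ratios themselves vary continuously with their inputs. This makes the coordinate map $\sf{Hit}_m(S)\to\bb R^N$ continuous.

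To identify the polytope, I would describe two kinds of linear equalities in the logarithmic coordinates. The first are \emph{rotation equalities}, encoding the fact that $\tau^{abc}(\rho,t)$ depends on a preferred vertex of $t$ (cf.\ Remark \ref{rmk:triangleinv}); applying \eqref{eqn:symmetries} to a triangle with its cyclic rotations of vertices gives identities of the form $\tau^{abc}=\tau^{bca}$ which only a subset of the full array of parameters can realize. The second are \emph{closed leaf equalities}, expressing the $\rho$-equivariance of $\xi_\rho$ around a pants curve $c\in\lambda_c$: the shearing parameters along $c$ and the triangle parameters of triangles adjacent to $c$ must satisfy one linear relation per simple root, so that the combinatorial return map around $c$ in $\wt S$ is compatible with $\rho(c)$. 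No strict inequalities survive in the logarithmic coordinates because Theorem \ref{thm:poslimmap} guarantees that triple and double ratios of positive tuples are positive; thus the image is an affine slice, which is the convex polytope.

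For injectivity, I would exploit that a positive $k$-tuple of flags in $\sf F_m^{(k+)}$ is determined up to the diagonal $\rm{PGL}_m(\bb R)$-action by its Fock--Goncharov coordinates (a standard companion to Theorem \ref{thm:posflags}). If two Hitchin representations $\rho_1,\rho_2$ produce the same parameters, then $\xi_{\rho_1}$ and $\xi_{\rho_2}$ agree, up to a single $g\in\rm{PGL}_m(\bb R)$, on the dense set of $\bdry \wt S$ consisting of endpoints of lifts of leaves of $\lambda$; continuity of the limit maps upgrades this to global agreement, and then $\rho$-equivariance forces $\rho_2=g\rho_1 g^{-1}$. Surjectivity is where the real content lies. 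Given a tuple satisfying the polytope equalities, I would construct an equivariant positive limit map $\xi\colon\bdry \wt S\to \sf F_m(\bb R)$ by induction on combinatorial distance from a base lift $\wt t_0 \in\wt{\cal T}_\lambda$. Fix three flags in general position at the vertices of $\wt t_0$ realizing its prescribed triangle parameters (this uses up the $\rm{PGL}_m(\bb R)$-ambiguity). Propagate across each shared oriented edge $\wt e$ of $\wt\lambda_o$ using $\sigma^a(\wt e)$ together with the triangle parameters of the neighbor to reconstruct the new flag, and across each transverse arc $\wt k$ using the shearings along $\wt c\in\wt\lambda_c$; whenever the combinatorics forces one to compare parameters based on different triangulations, Lemma \ref{lem:strata} provides the mutation formulas needed to translate. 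A density/positivity argument extends $\xi$ continuously to all of $\bdry \wt S$, and Theorem \ref{thm:poslimmap} places the resulting representation $\rho$ in $\sf{Hit}_m(S)$.

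The main obstacle is verifying the equivariance of the reconstructed $\xi$: concretely, showing that going once around the deck transformation associated with a pants curve returns the parameters to themselves. This is exactly the role of the closed leaf equalities, and the delicate step is checking that the mutation formulas from Lemma \ref{lem:strata}, applied repeatedly to transport coordinates across all the flips that occur as one sweeps through a fundamental domain of $c$, combine into the identity precisely under those equalities. Once this is established, the construction descends to a genuine $\pi_1(S)$-action and produces the inverse of the coordinate map, completing the homeomorphism.
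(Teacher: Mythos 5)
This statement is quoted verbatim from Bonahon--Dreyer (Theorem 2 of \cite{BoDr14}); the paper gives no proof of it, so there is no internal argument to compare yours against. Judged on its own terms, your outline follows the general architecture of the actual Bonahon--Dreyer proof (continuity, identification of relations, injectivity from the coordinates on positive tuples of flags, surjectivity by reconstructing an equivariant flag map), but it contains one substantive error and one related gap.

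The error is your claim that ``no strict inequalities survive in the logarithmic coordinates'' and that the image is an affine slice. The theorem itself asserts the polytope is cut out by \emph{inequalities and} equalities, and the inequalities are not an artifact of positivity of double and triple ratios. They come from the closed leaves: for each $c\in\lambda_c$ and each $a=1,\dots,m-1$, Theorem \ref{thm:lengths} forces $\log\frac{\lambda_a}{\lambda_{a+1}}(\rho,c)>0$, and Proposition \ref{prop:lengthequ} (Equation \ref{eqn:lengths}) expresses this quantity as an explicit linear combination of shearing and triangle parameters. These are finitely many \emph{strict} linear inequalities that every point in the image must satisfy, and they are part of the definition of the target polytope; the paper says exactly this in the paragraph following the theorem. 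Your identification of the equalities (rotation conditions, and the matching of the two expressions for $\log\frac{\lambda_a}{\lambda_{a+1}}(\rho,c)$ computed from either side of a pants curve) is in the right spirit, but dropping the inequalities misdescribes the image.

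This is not merely a bookkeeping slip: the inequalities are load-bearing in your surjectivity step. The induction ``on combinatorial distance from $\wt t_0$'' produces flags at the endpoints of lifts of leaves, but the open leaves spiral around the closed leaves, so infinitely many triangles accumulate at the fixed points of each $\wt c\in\wt\lambda_c$. Extending $\xi$ continuously there (and verifying that the resulting flag at the attracting fixed point is the attracting flag of the reconstructed holonomy, so that Theorem \ref{thm:poslimmap} applies and the representation lands in $\sf{Hit}_m(S)$) requires the relevant infinite products of elementary matrices to converge, and that convergence is exactly what the strict inequalities $\log\frac{\lambda_a}{\lambda_{a+1}}(c)>0$ guarantee. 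A parameter tuple satisfying only your equalities but violating one of these inequalities does not arise from any Hitchin representation, so your ``density/positivity argument'' for extending $\xi$ cannot go through as stated.
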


The equalities and inequalities in the statement of Theorem \ref{thm:bdmain} arise from the rotation conditions for the triangle parameters and from a careful analysis of the conjugacy classes $\rho(c)$ for $c\in\lambda_c$. 

Let us explain more. Note that $c\in\lambda_c$ defines a non-trivial conjugacy class $c$ in $\pi_1(S)$. For every Hitchin representation $\rho$ label by 
\[
\lambda_1(\rho,c)\geq\dots \geq \lambda_{m}(\rho,c)>0
\] 
 the moduli of the eigenvalues of $\rho(\gamma)\in\rm{PSL}_m(\bb R)$, with $\gamma$ any representative for the conjugacy class $c$. Note that $\lambda_a(\rho, c)$ only depends on $\rho$ and $c$. 

\begin{thm}[Theorem 1.5 \cite{Lab06}, Theorem 1.13 \cite{FG06}]\label{thm:lengths} For every Hitchin representation $\rho$ and for every non-trivial conjugacy class $c$ in $\pi_1(S)$, the $\lambda_a(\rho, c)$'s are distinct.
\end{thm}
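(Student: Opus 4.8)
The plan is to fix a single representative $\gamma\in\pi_1(S)$ of the conjugacy class $c$ — the moduli $\lambda_a(\rho,c)$ depend only on $\rho$ and $c$ — and to pin down the eigenlines of $\rho(\gamma)$ using the limit map $\xi_\rho$. Since $S$ is closed and hyperbolic, $\gamma$ acts on $\bdry\wt S\cong S^1$ with north--south dynamics: it has an attracting fixed point $\gamma^+$, a repelling fixed point $\gamma^-$, and every other point is pushed toward $\gamma^+$ under forward iteration. By Theorem \ref{thm:flagmap} the flags $F_+:=\xi_\rho(\gamma^+)$ and $F_-:=\xi_\rho(\gamma^-)$ form a generic, hence transverse, pair, and $\rho$-equivariance of $\xi_\rho$ gives $\rho(\gamma)F_\pm=F_\pm$. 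Transversality yields a decomposition $\bb R^m=L_1\oplus\dots\oplus L_m$ into lines with $F_+^{(a)}=L_1\oplus\dots\oplus L_a$ and $F_-^{(a)}=L_m\oplus\dots\oplus L_{m-a+1}$; since $\rho(\gamma)$ preserves both flags it preserves each $L_i=F_+^{(i)}\cap F_-^{(m-i+1)}$, so (passing to a lift in $\mathrm{SL}_m(\bb R)$) $\rho(\gamma)$ is $\bb R$-diagonalizable with a real eigenvalue $\mu_i$ on $L_i$. It then suffices to establish the strict chain
\[
|\mu_1|>|\mu_2|>\dots>|\mu_m|,
\]
since reordering identifies $\lambda_a(\rho,c)=|\mu_a|$, and distinctness of the $\lambda_a$'s is exactly this chain.

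To obtain the strict inequalities I would use the positivity of $\xi_\rho$. Fix any $x\in\bdry\wt S\setminus\{\gamma^\pm\}$. Because $\gamma$ is an orientation-preserving homeomorphism of $S^1$ that fixes $\gamma^\pm$ and is fixed-point-free on the complementary arcs, the orbit $(\gamma^nx)_{n\in\bb Z}$ lies on one such arc and is strictly monotone, so $\{\gamma^-\}\cup\{\gamma^nx\}_{n\in\bb Z}\cup\{\gamma^+\}$ is cyclically ordered as $\gamma^-,\dots,\gamma^{-1}x,x,\gamma x,\dots,\gamma^+$. By Theorem \ref{thm:poslimmap} every finite cyclically ordered sub-tuple of the corresponding flags is positive; in particular each triple $(F_+,\xi_\rho(\gamma^nx),F_-)$ is positive. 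In coordinates adapted to the transverse pair $(F_+,F_-)$, so that $\xi_\rho(\bdry\wt S)$ lies in the open Bruhat cell of flags transverse to $F_+$, positivity of this triple means $\xi_\rho(\gamma^nx)$ is the image of the reference flag $F_-$ under a \emph{totally positive} unipotent $u_n$, and $\rho(\gamma)$-equivariance gives the recursion $u_{n+1}=\mathrm{diag}(\mu_1,\dots,\mu_m)\,u_n\,\mathrm{diag}(\mu_1,\dots,\mu_m)^{-1}$, i.e. $(u_{n+1})_{ij}=(\mu_i/\mu_j)(u_n)_{ij}$. Since the superdiagonal entries of a totally positive unipotent are positive for every $n\in\bb Z$, all ratios $\mu_i/\mu_{i+1}$ are positive; and since $\gamma^nx\to\gamma^+$ and $\xi_\rho$ is continuous, $\xi_\rho(\gamma^nx)\to F_+$, which forces $u_n$ to leave every compact subset of the cell in the direction of $F_+$, and this is possible only if $|\mu_i/\mu_{i+1}|>1$ for all $i$ — otherwise the $(i,i{+}1)$ entry of $u_n$ would stay bounded or tend to $0$ and the flag could not converge onto $F_+$.

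The main obstacle is precisely this last step: separating the \emph{middle} eigenvalues. The north--south dynamics by itself only shows that $|\mu_1|$ is the strict maximum and $|\mu_m|$ the strict minimum among the moduli (track $\rho(\gamma)^{\pm n}\xi_\rho(x)^{(1)}$ and use genericity of the triple $(F_+,\xi_\rho(x),F_-)$), and running the same argument on $\wedge^a\bb R^m$ only separates the partial product $\mu_1\cdots\mu_a$ from products of $a$ \emph{consecutive} eigenvalues; this already suffices when $m\le3$ but not when $m\ge4$. Closing the gap requires the full strength of the positivity of the Fock--Goncharov coordinates of the orbit configuration — equivalently, the hyperconvexity/Frenet property of $\xi_\rho$ recorded in Theorem \ref{thm:flagmap} — and making the ``flag pushed to infinity'' picture quantitative amounts to reproving a portion of the arguments of Labourie and of Fock--Goncharov. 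A tidy alternative packaging of the same content is to exhibit a conjugate of $\rho(\gamma)$ inside the totally positive semigroup of $\mathrm{GL}_m(\bb R)$ and invoke the classical Gantmacher--Krein theorem, which asserts that a totally positive $m\times m$ matrix has $m$ distinct positive real eigenvalues.
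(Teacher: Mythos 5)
The paper does not prove this statement: it is imported as a citation to \cite{Lab06} and \cite{FG06}, and then used implicitly through Propositions \ref{prop:gtilde}--\ref{prop:multi} (see the proof of Lemma \ref{lem:trace}, where $\rho_n(c)$ is realized by a totally positive matrix with distinct positive real eigenvalues). So there is no internal proof to compare against; the relevant comparison is with the cited sources.

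Your sketch reconstructs the standard argument, and your diagnosis of where the real work lies is accurate. The transversality step giving $\bb R^m=\bigoplus L_i$ with $L_i=F_+^{(i)}\cap F_-^{(m-i+1)}$, hence $\bb R$-diagonalizability of a lift, is correct, as is the recursion $(u_{n+1})_{ij}=(\mu_i/\mu_j)(u_n)_{ij}$ and the resulting positivity $\mu_i/\mu_{i+1}>0$. But the final inference as written — ``otherwise the $(i,i{+}1)$ entry of $u_n$ would stay bounded or tend to $0$ and the flag could not converge onto $F_+$'' — is not an argument: boundedness of a single entry does not preclude flag convergence. The quantitative step that actually closes this is the Pl\"ucker minor computation: since $u_n=D^nu_0D^{-n}$, the minor of $u_n$ with rows $I$ and columns $\{m-a+1,\dots,m\}$ equals $\bigl(\prod_{i\in I}\mu_i\bigr)^{n}\bigl(\prod_{j>m-a}\mu_j\bigr)^{-n}$ times the corresponding minor of $u_0$, and convergence $u_nF_-^{(a)}\to F_+^{(a)}$ forces $\mu_1\cdots\mu_a>\mu_{i_1}\cdots\mu_{i_a}$ for every $a$-subset whose $u_0$-minor is nonzero; taking $I=\{1,\dots,a-1,a+1\}$ gives $\mu_a>\mu_{a+1}$. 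You are also right that genericity of the triple only guarantees nonvanishing of the minors indexed by intervals — enough to pin down $|\mu_1|$ and $|\mu_m|$ and to handle $m\le3$, but for $m\ge4$ there are lists like $(4,1,2,\tfrac12)$ satisfying every interval constraint — so one genuinely needs positivity, i.e.\ \emph{all} relevant minors of $u_0$ strictly positive, to finish. Your closing alternative is the cleanest packaging and is exactly the Fock--Goncharov route that the paper implicitly uses: realize $\rho(\gamma)$ by a totally positive matrix via Proposition \ref{prop:multi} and invoke Gantmacher--Krein, which directly yields $m$ distinct positive real eigenvalues.
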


\begin{prop}[Proposition 13 \cite{BoDr14}]\label{prop:lengthequ} Let $\rho$ be a Hitchin representation and let $\kappa$ be the standard maximal geodesic lamination defined in Example \ref{ex:prefmaxgeo}. Let $g$ be a boundary component of a pair of pants $P$ in $\cal P$ oriented so that $P$ lies to its left. Let $e\in\kappa_{P}$ be oriented towards $g$ and let $e'\in\kappa_P$ be oriented away from $g$. Finally, set $\cal T_{\kappa, P}=\{t,t'\}$. Then, for all $a=1,\dots, m-1$
\begin{equation}\label{eqn:lengths}
\log\frac{\lambda_a}{\lambda_{a+1}}(\rho,g)=\sigma^{a}(\rho,e)+\sigma^{m-a}(\rho,e')+\sum_{b+c=a}\left(\tau^{(m-a)bc}(\rho,t)+ \tau^{(m-a)bc}(\rho,t')\right).
\end{equation}
\end{prop}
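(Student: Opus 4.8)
\textbf{Proof proposal for Proposition \ref{prop:lengthequ}.}

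The plan is to realize the boundary curve $g$ as a $\rho$-equivariant ``staircase'' path in the universal cover that alternately crosses open leaves of $\wt\kappa_P$ and ideal triangles of $\wt{\cal T}_{\kappa,P}$, and then to express the logarithm of the ratio of consecutive eigenvalues of $\rho(g)$ as a telescoping sum of the double-ratio and triple-ratio contributions accumulated along one fundamental domain of that path. First I would fix a lift $\wt g$ of the boundary geodesic, together with its attracting and repelling fixed points $x_{g^+},x_{g^-}\in\bdry\wt S$; the element $\gamma\in\pi_1(S)$ representing $c=[g]$ acts on $\wt S$ with these fixed points, and its axis $\wt g$ is spiraled onto by the leaves of $\wt\kappa_P$ on both sides. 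Since $\kappa$ is the standard lamination of Example \ref{ex:prefmaxgeo}, the leaves $e$ (oriented towards $g$) and $e'$ (oriented away from $g$) each have exactly one endpoint spiraling onto $g$; their lifts, together with the triangles $\wt t,\wt t'$, fill a neighborhood of $\wt g$ with a $\gamma$-periodic sequence of quadrilaterals. The key input is Theorem \ref{thm:flagmap}: because $x_{g^+}$ is the limit of the spiraling sequence of vertices, the flag $\xi_\rho(x_{g^+})$ is the attracting flag of the (loxodromic, by Theorems \ref{thm:lengths}) element $\rho(\gamma)$, i.e. $\xi_\rho(x_{g^+})^{(a)}$ is spanned by the top-$a$ eigenlines of $\rho(\gamma)$, and similarly $\xi_\rho(x_{g^-})$ is the repelling flag.

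Next I would choose a basis of $\bb R^m$ consisting of eigenvectors of $\rho(\gamma)$, ordered by decreasing modulus $\lambda_1>\dots>\lambda_m$, so that $\rho(\gamma)$ is diagonal. In this basis, for any auxiliary flag $F$ in generic position with both $\xi_\rho(x_{g^+})$ and $\xi_\rho(x_{g^-})$, one can compute how the wedge-products $f^\alpha$ appearing in the definitions of $D^a$ and $T^{abc}$ transform under $\rho(\gamma)$: the vector $f^\alpha_{g^+}$ generating $\wedge^\alpha\xi_\rho(x_{g^+})^{(\alpha)}$ is an eigenvector of $\wedge^\alpha\rho(\gamma)$ with eigenvalue $\lambda_1\cdots\lambda_\alpha$, and likewise $f^\alpha_{g^-}$ has eigenvalue $\lambda_{m-\alpha+1}\cdots\lambda_m$. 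The geometric shearing/triangle parameters along the portion of $\wt\kappa_P$ between a quadrilateral and its $\gamma$-translate are, by definition, logarithms of double and triple ratios of the flags $\xi_\rho$ at the relevant ideal vertices; when one adds up the contributions of $\sigma^a(\rho,e)$, $\sigma^{m-a}(\rho,e')$, and the triangle terms $\tau^{(m-a)bc}(\rho,t)$, $\tau^{(m-a)bc}(\rho,t')$ over $b+c=a$, nearly all the wedge-factors cancel in pairs between adjacent triangles and leaves (this is the telescoping), and the surviving factors are exactly the eigenvalue-ratios $\lambda_a/\lambda_{a+1}$ picked up by applying $\rho(\gamma)$ once. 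The specific combination $b+c=a$ of triple ratios is forced by matching the ``level-$a$'' wedge powers on the two sides of the axis; this bookkeeping is most transparent using the $\cal Q_0$-picture and the diagonal-flip description of \S \ref{ssec:flips}, since $e$ and $e'$ together with $t,t'$ form precisely a flipped pair of adjacent triangles around $g$.

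The main obstacle I anticipate is the combinatorial bookkeeping in the telescoping step: one must carefully track the cyclic ordering of the ideal vertices of the lifted quadrilaterals along $\bdry\wt S$, keep the orientation conventions of Notation \ref{not:vertices} straight for both $e$ (towards $g$) and $e'$ (away from $g$), and verify that the products of triple ratios over $b+c=a$ assemble into exactly the missing wedge-factors with the correct signs — the double ratios carry a sign $-1$ in their definition, and one needs the genericity/positivity (Theorem \ref{thm:poslimmap}) to know all these quantities are positive so that taking logarithms is legitimate and no sign ambiguity survives. A clean way to organize this is to first prove the identity at the level of the multiplicative quantities $\frac{\lambda_a}{\lambda_{a+1}}(\rho,g)$ as a product of triple and double ratios of $\xi_\rho$-flags around the axis, using the eigenvalue computation above, and only then take logarithms to recover \eqref{eqn:lengths}; the identification of each factor with the named parameter $\sigma^a(\rho,e)$, $\sigma^{m-a}(\rho,e')$, or $\tau^{(m-a)bc}(\rho,t^{(\prime)})$ then follows directly from the definitions in \S \ref{ssec:coords}. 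One should also invoke the independence of $\sigma^a$ and $\tau^{abc}$ from the chosen lifts (Remarks \ref{rmk:triangleinv}) so that the periodic sum is well-defined.
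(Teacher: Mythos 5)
The paper quotes this statement from \cite{BoDr14} (their Proposition 13) without proof, so there is no in-paper argument to compare against; I will instead assess your sketch against what the cited source requires. Your setup is the right one: identify $\xi_\rho(x_{g^\pm})$ with the attracting and repelling flags of $\rho(\gamma)$ using the continuity and $\rho$-equivariance of Theorem \ref{thm:flagmap} (together with the distinct eigenvalue moduli of Theorem \ref{thm:lengths}), pass to a $\rho(\gamma)$-eigenbasis, and extract $\log\lambda_a/\lambda_{a+1}$ from the multiplicative behavior of the double and triple ratios along one $\gamma$-period of the spiral of $\wt e,\wt t,\wt e',\wt t'$ around the axis $\wt g$. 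This is consistent with the Bonahon--Dreyer approach.

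However, your argument defers precisely the step that carries the content. The specific index pattern in \eqref{eqn:lengths} --- $a$ on $e$, $m-a$ on $e'$, and only those triple ratios $\tau^{(m-a)bc}$ with $b+c=a$ on each of $t,t'$ --- \emph{is} the proposition, and labeling its verification ``combinatorial bookkeeping'' that you ``anticipate as the main obstacle'' means the proof has not actually been given. At least the following must be pinned down. First, the triangle parameters depend on a choice of preferred vertex (Remark \ref{rmk:triangleinv}), and a different choice cyclically permutes the superscripts of $\tau$ via Equation \eqref{eqn:symmetries}; to land on $\tau^{(m-a)bc}$ rather than some cyclic shuffle, the preferred vertices of $t$ and $t'$ must be fixed compatibly with the direction of the spiral, which your sketch never does. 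Second, the $a$-versus-$(m-a)$ asymmetry between $e$ and $e'$ must be derived from their opposite orientations relative to $g$ through the double-ratio symmetry $D^a(F_1,F_2,F_3,F_4)=D^{m-a}(F_3,F_4,F_1,F_2)$ of Equation \eqref{eqn:symmetries}; this is a genuine wedge computation, not bookkeeping. Finally, the suggestion that the $\cal Q_0$/quiver picture of \S\ref{ssec:flips} is the transparent way to organize this is misplaced: that machinery encodes how Fock--Goncharov coordinates change under a single diagonal flip of a quadrilateral, whereas the configuration around $\wt g$ is a $\gamma$-periodic bi-infinite strip of ideal triangles and the target quantity is an eigenvalue ratio, not a transformed coordinate. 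So the skeleton is right, but the identity itself --- the hard part --- remains unproved.
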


Note that formulas similar to Equations \ref{eqn:lengths} hold for maximal geodesic laminations (with finitely many leaves) other than $\kappa$ \cite[Proposition 13]{BoDr14}.

Equations \ref{eqn:lengths} combined with Theorem \ref{thm:lengths} force certain expressions of the triangle and shearing invariants to be positive. 
Moreover, Equations \ref{eqn:lengths} give two expressions for
\[
\log\frac{\lambda_a}{\lambda_{a+1}}(\rho,g)=\log\frac{\lambda_{m-a}}{\lambda_{m-a+1}}(\rho,g^{-1})
\] 
in terms of a priori different triangle and shearing invariants. We refer to the constraints on the triangle and shearing parameters provided by Proposition \ref{prop:lengthequ} as the \emph{Bonahon-Dreyer length relations}.

\section{Sequences of tree-type}\label{sec:trees}

We now introduce our main new definition.

\begin{defin}[Tree-type]\label{def:treetype}A sequence of Hitchin representations $(\rho_n)$ is of \emph{tree-type with respect to the maximal geodesic lamination $\kappa$} defined in Example \ref{ex:prefmaxgeo} if there exists a sequence of strictly positive real numbers $(r_n)_n\subset \bb R$ converging to 0, and such that
\begin{itemize}
	\item[(A)] for every ideal triangle $t\in\cal T_\kappa$, and for every $a,b,c\in\bb Z_{>0}$ with $a+b+c=m$
	\[
	\lim_{n\to\infty} r_n \tau^{abc}(\rho_n,t)=0,
	\]
	\item[(B)] for every open leaf $e\in\kappa_o$, the limit 
	\[
	\lim_{n\to\infty} r_n\sigma^a(\rho_n, e)
	\] 
	is either
	\begin{itemize}
	\item[(B1)] non-negative for all $a=1,\dots, m-1$, or
	\item[(B2)] non-positive for all $a=1,\dots, m-1$.
	\end{itemize}
\end{itemize}
We refer to $r_n$ as a \emph{scaling sequence} for $\rho_n$.
\end{defin}

As mentioned in the introduction, our standing assumption is that the scaling sequence is non-trivial: at least one of the limits in Definition \ref{def:treetype} is different from zero.

\begin{example} Recall that the Hitchin component contains a copy of the space of Fuchsian representations. It is well known that, given any maximal geodesic lamination $\lambda$, a Fuchsian representation $\rho$ in the Hitchin component is such that:
\begin{enumerate}[$(i)$]
	\item $\tau^{abc}(\rho, t)=0$ for all triangles $t\in\cal T_\lambda$, and for all $a,b,c\in\bb Z_{>0}$ with $a+b+c=m$;
	\item $\sigma^a(\rho, e)=\sigma^b(\rho,e)$ for all open leaves $e\in\lambda_o$ and for all $a,b=1,\dots, m-1$. 
\end{enumerate}	
	Therefore, sequences of Fuchsian representations are (the motivating) examples of sequences of Hitchin representations of tree-type.
\end{example}

\begin{example}\label{ex:genus2} We now describe an explicit example of a sequence of Hitchin representations of tree-type which are not in the Fuchsian locus of the Hitchin component. Set $m=3$ and let $S$ be the surface of genus $2$ obtained by gluing two pairs of pants $P$ and $Q$ as in Figure \ref{fig:example}. We denote by $\kappa$ the standard maximal geodesic lamination associated to the pants decomposition $\cal P=\{P,Q\}$.

\begin{figure}[h]
\includegraphics[scale=.4]{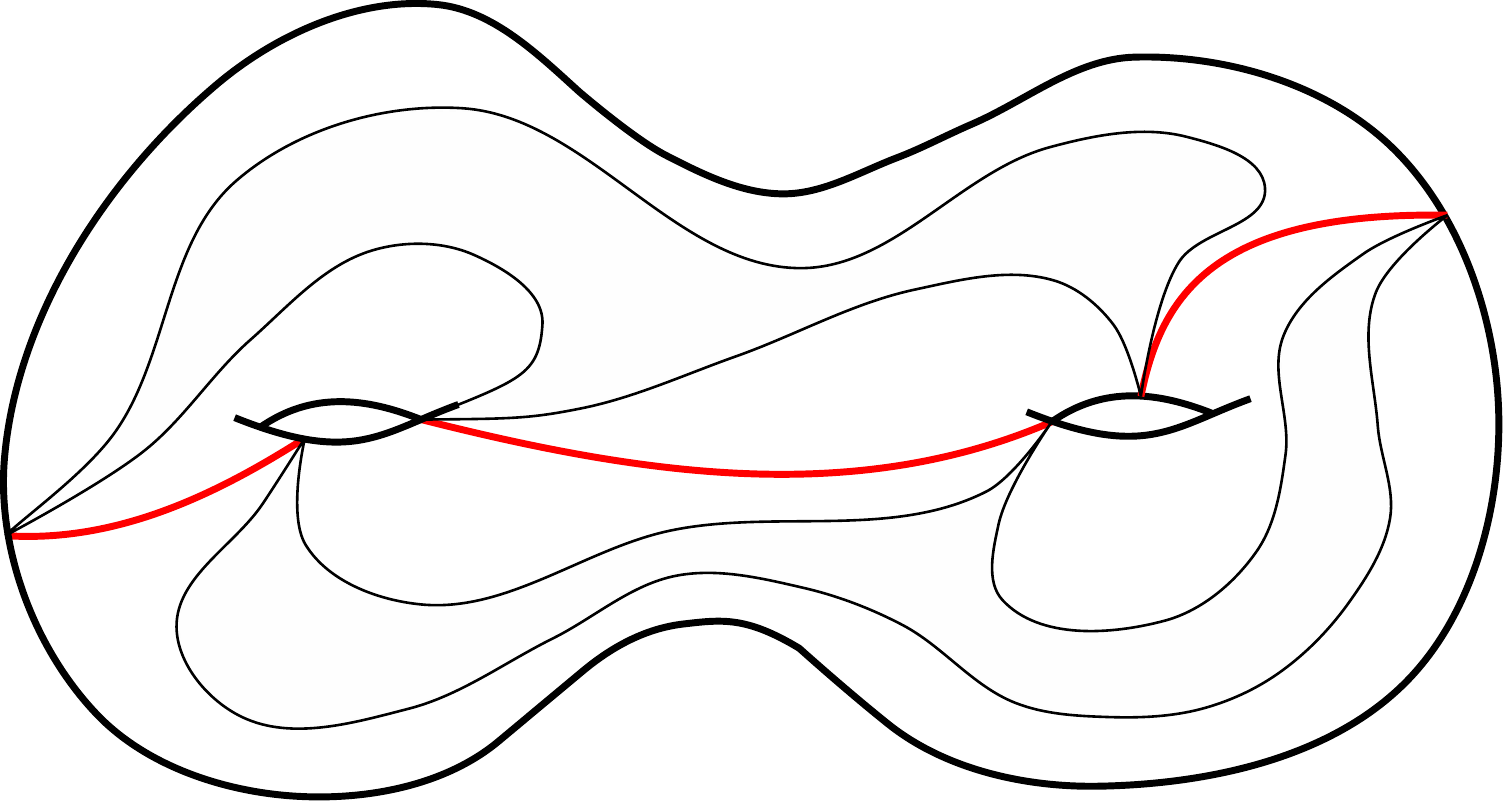}
\put (-470, 78){\makebox[0.7\textwidth][r]{\tiny{$e_1$}}}
\put (-450,60){\makebox[0.7\textwidth][r]{\tiny{$e_2$}}}
\put (-490, 25){\makebox[0.7\textwidth][r]{\tiny{$g_3$}}}
\put (-342, 69){\makebox[0.7\textwidth][r]{\tiny{$h_2$}}}
\put (-363,25){\makebox[0.7\textwidth][r]{\tiny{$f_3$}}}
\put (-335, 40){\makebox[0.7\textwidth][r]{\tiny{$f_1$}}}
\caption{The surface $S$ equipped with the maximal geodesic lamination $\kappa$ extending $\cal P$. The curves $g_i=h_i^{-1}$ are highlighted.}
\label{fig:example}
\end{figure}

Consider $P\in\cal P$ with boundary curves $g_1,g_2,g_3$ oriented so that $P$ lies to their left. Set $\kappa_{P}=\{e_1,e_2,e_3\}$, $\cal T_{\kappa,P}=\{t,t'\}$, and
\begin{align*}
\sigma^1(\rho_n, e_1)&=-n^2, & \sigma^1(\rho_n, e_2)&=n^2, & \sigma^1(\rho_n, e_3)&=2n, &  \tau^{111}(\rho_n, t)&=n\\
\sigma^2(\rho_n, e_1)&=-n, & \sigma^2(\rho_n, e_2)&=2n^2, & \sigma^2(\rho_n, e_3)&=2n^2, &  \tau^{111}(\rho_n,t')&=-n.
\end{align*}

Likewise, consider $Q\in\cal P$ with boundary curves $h_i=g_i^{-1}$ for $i=1,2,3$. Set $\kappa_{Q}=\{f_1,f_2,f_3\}$, $\cal T_{\kappa,Q}=\{u,u'\}$, and
\begin{align*}
\sigma^1(\rho_n, f_1)&=0, & \sigma^1(\rho_n, f_2)&=2n^2+n, & \sigma^1(\rho_n, f_3)&=2n^2+n, &  \tau^{111}(\rho_n, u)&=5n\\
\sigma^2(\rho_n, f_1)&=-n^2-n, & \sigma^2(\rho_n, f_2)&=n^2-n, & \sigma^2(\rho_n, f_3)&=n, &  \tau^{111}(\rho_n,u')&=-5n.
\end{align*}

Observe that these choices give a sequence of Hitchin representations as they satisfy the Bonahon-Dreyer length relations for $n>1$. Explicitly,
\begin{align*}
\log\frac{\lambda_1}{\lambda_2}(\rho_n,g_1)=\log\frac{\lambda_2}{\lambda_3}(\rho_n,h_1)&=2n+2n^2>0\\
\log\frac{\lambda_2}{\lambda_3}(\rho_n,g_1)=\log\frac{\lambda_1}{\lambda_2}(\rho_n,h_1)&=3n^2>0\\
 \log\frac{\lambda_1}{\lambda_2}(\rho_n,g_2)=\log\frac{\lambda_2}{\lambda_3}(\rho_n,h_2)&=n^2>0\\
 \log\frac{\lambda_2}{\lambda_3}(\rho_n,g_2)=\log\frac{\lambda_1}{\lambda_2}(\rho_n,h_2)&=n>0\\
  \log\frac{\lambda_1}{\lambda_2}(\rho_n,g_3)=\log\frac{\lambda_2}{\lambda_3}(\rho_n,h_3)&=n^2-n>0\\
 \log\frac{\lambda_2}{\lambda_3}(\rho_n,g_3)=\log\frac{\lambda_1}{\lambda_2}(\rho_n,h_3)&=n^2>0
\end{align*}

This construction defines a tree-type sequence of Hitchin representations for the surface $S$ of genus 2 with respect to the maximal lamination $\kappa$ with scaling sequence $r_n=1/n^2$. 
\end{example}

\begin{example}\label{exe:postriang3D} This example shows that, in general, property (A) in Definition \ref{def:treetype} is not preserved under diagonal flip of an open leaf of a maximal geodesic lamination $\lambda$.

Suppose $\rho$ is a Hitchin representation in $\sf{Hit}_3(S)$. Let $x_{e^+},x_{e^l},x_{e^-},x_{e^r}$ denote the vertices of two adjacent triangles $\wt t,\wt t'$ in $\wt{\cal T}_{\lambda}$ sharing the diagonal $e$ with endpoints $x_{e^+}$ and $x_{e^-}$. Let us simplify notations by setting
\begin{gather*}
X=T^{111}(\xi_\rho(x_{e^+}),\xi_\rho(x_{e^l}),\xi_\rho(x_{e^-})),\ Y= T^{111}(\xi_\rho(x_{e^+}),\xi_\rho(x_{e^-}),\xi_\rho(x_{e^r}))\\
Z=D^1(\xi_\rho(x_{e^+}),\xi_\rho(x_{e^l}),\xi_\rho(x_{e^-}),\xi_\rho(x_{e^r})),\ W=D^2(\xi_\rho(x_{e^+}),\xi_\rho(x_{e^l}),\xi_\rho(x_{e^-}),\xi_\rho(x_{e^r}))
\end{gather*}
An explicit computation shows that
\begin{gather*}
T^{111}(\xi_\rho(x_{e^l}),\xi_\rho(x_{e^-}),\xi_\rho(x_{e^r}))=X\frac{1+W+WY+ZWY}{1+Z+ZX+ZWX},\\ T^{111}(\xi_\rho(\xi_\rho(x_{e^l}),\xi_\rho(x_{e^r}),\xi_\rho(x_{e^+}))=Y\frac{1+Z+ZX+ZWX}{1+W+WY+ZWY}
\end{gather*}
It follows that one can find sequences of Hitchin representations that satisfy property (A) in Definition \ref{def:treetype} with respect to $\kappa$, but they do not satisfy the same property when considering the maximal geodesic lamination obtained from $\kappa$ by diagonal flip of one open leaf. For an explicit example, set $r_n=1/n$, $X_n=Y_n=1$, $W_n=e^n$ and $Z_n=e^{-n}$ and observe that in this case
\[
\lim_nr_n\log T^{111}(\xi_\rho(x_{e^l}),\xi_\rho(x_{e^-}),\xi_\rho(x_{e^r}))=1.
\]
\end{example}

\section{Preferred lamination on a pair of pants}\label{ssec:pairofpants}

This section is dedicated to the proof of Theorem \ref{thm:FLP}. Informally, this result states that item (B) makes Definition \ref{def:treetype} of tree-type sequences of Hitchin representations \emph{``stable under diagonal flip''}. More precisely, it fixes the issue highlighted in Example \ref{exe:postriang3D}.

\begin{thm}\label{thm:FLP} Let $(\rho_n)$ be a sequence of Hitchin representations of tree-type with respect to the maximal geodesic lamination $\kappa$ described in Example \ref{ex:prefmaxgeo}. Then, there exists a maximal geodesic lamination $\lambda^+=\lambda^+(\cal P,\rho_n)$ of $S$ extending the pants decomposition $\cal P$, and such that for every open leaf $e\in \lambda_o^+$ and triangle $t\in \cal T_{\lambda^+}$ we have
\begin{align}
\lim_{n\to\infty}r_n\sigma^a(\rho_n,e)\geq 0,\label{eqn:positiveshear}\\
\lim_{n\to\infty}r_n\tau^{abc}(\rho_n,t)=0.\label{eqn:positivetriangle}
\end{align}
\end{thm}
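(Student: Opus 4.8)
The plan is to work pair of pants by pair of pants and produce $\lambda^+$ by applying a (generalized) diagonal flip of exactly those open leaves of $\kappa$ where condition (B) holds with the ``wrong'' sign, i.e. at each leaf $e\in\kappa_o$ for which $\lim_n r_n\sigma^a(\rho_n,e)\le 0$ for all $a$ (and is nonzero for some $a$). Condition (B) for $\kappa$ guarantees that each open leaf is of exactly one of these two types, so this prescription is well-defined. If the limit is $\ge 0$ for all $a$ we keep $e$; if it is $\le 0$ for all $a$ we flip it. Since the three open leaves of $\kappa_P$ are pairwise disjoint and each separates the pair of pants into an ideal triangle and a ``bigon-with-a-boundary'' region, these flips can be performed independently, and the result is again a maximal geodesic lamination of $S$ extending $\cal P$. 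What must then be checked is that the two displayed asymptotic conditions hold for this new lamination.

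The main work is the verification of \eqref{eqn:positiveshear} and \eqref{eqn:positivetriangle} for the flipped leaves. First I would set up, in the universal cover, the ideal quadrilateral determined by the two triangles $\wt t,\wt t'$ of $\kappa_P$ adjacent along an open leaf $\wt e$, with cyclically ordered vertices $x_{e^+},x_{e^l},x_{e^-},x_{e^r}$ as in Notation \ref{not:vertices}. The Fock--Goncharov coordinates attached to this quadrilateral for the $\kappa$-triangulation are the shearings $\sigma^a(\rho_n,e)$ and the triangle parameters $\tau^{abc}(\rho_n,t),\tau^{abc}(\rho_n,t')$; after the flip, the new shearings and triangle parameters are obtained by the explicit cluster mutation sequence $\nu_{r_{\max}-1}\circ\cdots\circ\nu_0$ of Lemma \ref{lem:strata} applied to the weighted directed graph $\cal Q_0(e)$ (with the extra boundary weights of Remark \ref{rmk:generalflip} recording how the flip changes the double ratios of the closed leaf that forms the relevant side of the quadrilateral). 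The key structural observation is the mutation rule $\mu_v(X^w)$: the new weights are monomials in $X^v$ times $(1+X^v)$ or $X^v/(1+X^v)$. Writing $X^v_n=\exp(\text{FG-coordinate})$ and applying $r_n\log(\cdot)$, each factor $1+X^v_n$ contributes $r_n\log(1+X^v_n)=\max\{0,\;r_n\log X^v_n\}+o(1)$, i.e. $[r_n\log X^v_n]_+$ in the limit; a factor $X^v_n/(1+X^v_n)$ contributes $\min\{0,\;r_n\log X^v_n\}+o(1)$. Hence, after scaling, each strata mutation becomes a piecewise-linear (tropical) map on the limit vector $(\lim_n r_n\log X^v_n)_v$, and the whole flip becomes the tropicalization of $\nu_{r_{\max}-1}\circ\cdots\circ\nu_0$.

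With this tropical reformulation in hand, the proof reduces to a combinatorial computation. By hypothesis (A), the limit weights at all interior ``triangle'' vertices $v=(a,b,c,0),\,(a,b,0,d)$ with $c,d\ne 0$ are zero; by hypothesis (B) applied to the flipped leaf, the limit weights at the ``diagonal'' vertices $v=(a,b,0,0)$, which equal $\lim_n r_n\sigma^a(\rho_n,e)$, are all $\le 0$. I would then trace these initial data through the tropical mutation sequence. Because the tropical version of $\mu_v$ with input $x^v\le 0$ acts on a neighbor $w$ with $v\to w$ by $x^w\mapsto x^w+[x^v]_+=x^w$ (no change) and on a neighbor with $w\to v$ by $x^w\mapsto x^w+[x^v]_-=x^w+x^v\le x^w$, one sees that starting from the all-$\le 0$/all-$0$ configuration the mutations can only keep triangle-vertex values at $0$ (they receive only $[{\le}0]_+=0$ increments and never get pushed positive, one needs to check the incoming/outgoing arrow bookkeeping here) and they keep diagonal-vertex values $\le 0$ (the tropicalized rule at a flipped vertex is $x^v\mapsto -x^v\ge 0$, but the new ``diagonal'' of the flipped quadrilateral is a different edge, so this sign flip is exactly what makes the new shearing nonnegative). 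Carrying out this sign chase carefully for each stratum $\bar\mu_s$ and then for the nested sequence $\nu_0,\dots,\nu_{r_{\max}-1}$ is the crux of the argument: one must confirm that (i) every new triangle parameter has limit $0$, giving \eqref{eqn:positivetriangle}, and (ii) every new shearing parameter has limit $\ge 0$, giving \eqref{eqn:positiveshear}; and one must also confirm that the induced change in the double ratios of the adjacent closed leaf, recorded via Remark \ref{rmk:generalflip}, does not spoil anything for neighboring pairs of pants — but since closed leaves are not constrained by the tree-type definition and the flips in different pants are independent, this is a bookkeeping matter rather than a real obstruction.

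The step I expect to be the main obstacle is the sign/monotonicity chase through the composition of strata mutations $\nu_{r_{\max}-1}\circ\cdots\circ\nu_0$: one needs a clean invariant — something like ``all triangle-vertex limits are $0$ and all diagonal-vertex limits are $\le 0$'' — that is preserved by each tropicalized strata mutation, and proving its preservation requires knowing precisely which arrows point into versus out of each vertex at each stage of the mutation sequence (Figures \ref{fig:mutation} and \ref{fig:stratamutation1}), together with the fact that the $o(1)$ errors in $r_n\log(1+X^v_n)=[r_n\log X^v_n]_+ +o(1)$ remain $o(1)$ under finitely many compositions. Once that invariant is established, reading off \eqref{eqn:positiveshear} and \eqref{eqn:positivetriangle} from the final weighted graph $\cal Q_0(f)$ is immediate, and gluing the local laminations $\lambda^+_P$ over all $P\in\cal P$ produces the desired global $\lambda^+$.
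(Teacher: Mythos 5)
Your construction of $\lambda^+$ and your tropical/asymptotic-mutation strategy for tracking Fock--Goncharov coordinates through a flip are very close in spirit to the paper's argument (compare the paper's ``asymptotic mutations'' and Proposition \ref{prop:asymptotics} with your tropicalization, and your ``sign chase'' with Theorem \ref{thm:pmflips}). However, there are two genuine gaps, and both have the same underlying cause: you never invoke the Bonahon--Dreyer length relations (Proposition \ref{prop:lengthequ}), which the paper uses as the backbone for exactly the two facts you are missing.

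First, your claim that the flips ``can be performed independently'' inside a pair of pants is false. All three open leaves of $\kappa_P$ are edges of the \emph{same} two ideal triangles, so a generalized diagonal flip at $e_i$ changes the two triangles of $P$ and hence changes the quadrilateral in which $e_j$ ($j\neq i$) sits; the flips do not commute, and in fact flipping two of the three leaves does not even produce a valid maximal geodesic lamination (after two flips one boundary component of $P$ is left with no cusps spiraling into it). The reason the construction of $\lambda^+$ is well defined in the paper is Lemma \ref{lem:flip}: the Bonahon--Dreyer length relations applied to the boundary geodesics of $P$, together with tree-type condition (A), force \emph{at most one} open leaf of $\kappa_P$ to have some negative limit $\lim_n r_n\sigma^a(\rho_n,e)<0$. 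Without this lemma your prescription is potentially ill-posed, and you would need to prove it (or something equivalent) before anything else.

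Second, your verification of Equation \eqref{eqn:positiveshear} is not right as stated. The only justification you offer is the sign flip $x^v\mapsto -x^v$ at the mutated diagonal vertex, but (i) after the full composition $\nu_{r_{\max}-1}\circ\cdots\circ\nu_0$ the values at those vertices are no longer simply the negated inputs, and more importantly (ii) you do not address the shearings of the \emph{unflipped} open leaves $e'$ of $\kappa_P$, whose double ratios with respect to $\lambda^+$ also change because one of their adjacent triangles has been replaced. Indeed, the asymptotic new shearing is $\lim_n r_n\bigl(\sigma^a(\rho_n,e')+\sigma^{m-a}(\rho_n,e)\bigr)$, which is a priori a sum of a nonnegative and a nonpositive quantity, so its sign is not obvious. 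The paper resolves this precisely by rewriting that sum (up to vanishing triangle terms) as a Bonahon--Dreyer length $\log\frac{\lambda_b}{\lambda_{b+1}}(\rho_n,g)\ge 0$. Your proposed invariant ``all triangle-vertex limits are $0$ and all diagonal-vertex limits are $\le 0$'' would also give the \emph{wrong sign} for the new diagonal if taken at face value; the correct bookkeeping requires identifying which vertices of $\cal Q_0(f)$ encode the new shearings, tracking the boundary weights of Remark \ref{rmk:generalflip}, and closing the argument with the length relations. In short, the tropical bookkeeping you outline is the right tool for \eqref{eqn:positivetriangle} (this is essentially Theorem \ref{thm:pmflips}), but \eqref{eqn:positiveshear} needs the length relations, which your proposal omits entirely.
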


We subdivide the proof of Theorem \ref{thm:FLP} into several intermediate results to improve readability; the main step in the proof is Theorem \ref{thm:pmflips}.

\begin{lem}\label{lem:flip} Consider a sequence of Hitchin representations of tree-type $(\rho_n)$ with respect to the standard maximal geodesic lamination $\kappa$ defined in Example \ref{ex:prefmaxgeo} and a scaling sequence $r_n$. For each pair of pants $P\in\cal P$, there exists at most one open leaf $e\in\kappa_{P}$ with
\[
\lim_{n\to \infty} r_n\sigma^a(\rho_n,e)< 0.
\]
for some $a\in\{1,\dots, m-1\}$.
\end{lem}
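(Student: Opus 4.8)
The plan is to exploit the Bonahon--Dreyer length relations (Proposition~\ref{prop:lengthequ}) together with Theorem~\ref{thm:lengths}, which forces the left-hand sides $\log\tfrac{\lambda_a}{\lambda_{a+1}}(\rho_n,g)$ to be strictly positive for every boundary curve $g$ of $P$ and every $a=1,\dots,m-1$. Fix a pair of pants $P\in\cal P$ with $\kappa_P=\{e_1,e_2,e_3\}$; recall from Example~\ref{ex:prefmaxgeo} that each open leaf $e_i$ spirals around two of the three boundary curves of $P$, and that with respect to any fixed boundary curve $g$ of $P$, exactly one leaf of $\kappa_P$ is oriented towards $g$ and exactly one is oriented away from $g$. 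Summing Equation~\eqref{eqn:lengths} over $a=1,\dots,m-1$ (and using that $\sum_{a}\sigma^{m-a}(\rho_n,e')=\sum_a\sigma^a(\rho_n,e')$) expresses $\ell_H(\rho_n,g)=\sum_a\log\tfrac{\lambda_a}{\lambda_{a+1}}(\rho_n,g)$ as a sum of the total shearings $\sigma(\rho_n,e)=\sum_a\sigma^a(\rho_n,e)$ along the two leaves adjacent to $g$, plus a sum of triangle parameters. By the tree-type hypothesis (A), $r_n\tau^{abc}(\rho_n,t)\to 0$ for all triangles $t\in\cal T_{\kappa,P}$, so the triangle contribution vanishes after rescaling. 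Since $\ell_H(\rho_n,g)>0$, we get $\liminf_n r_n\big(\sigma(\rho_n,e)+\sigma(\rho_n,e')\big)\geq 0$ for the two leaves $e,e'$ adjacent to each boundary curve $g$.

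Next I would combine hypothesis (B) with the sign information just obtained. By (B), for each leaf $e_i$ the limit $L_i^a:=\lim_n r_n\sigma^a(\rho_n,e_i)$ exists and has a single sign across all $a$, so the total $L_i:=\lim_n r_n\sigma(\rho_n,e_i)=\sum_a L_i^a$ is $\geq 0$ in case (B1) and $\leq 0$ in case (B2); in particular, the leaf $e_i$ satisfies $\lim_n r_n\sigma^a(\rho_n,e_i)<0$ for some $a$ only if $L_i<0$ (and this can happen only in case (B2)). Suppose for contradiction that two distinct leaves of $\kappa_P$, say $e_1$ and $e_2$, each have some negative-limit shearing component; then $L_1<0$ and $L_2<0$. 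Each boundary curve of $P$ is adjacent to exactly two of the three leaves $e_1,e_2,e_3$; in particular there is a boundary curve $g$ adjacent precisely to $e_1$ and $e_2$ (indeed, in the standard lamination each pair of leaves is simultaneously adjacent to exactly one boundary curve). For that $g$, the length relation gives, after rescaling, $0\leq\lim_n r_n\ell_H(\rho_n,g)=L_1+L_2<0$, a contradiction. Hence at most one leaf of $\kappa_P$ can carry a negative-limit shearing component.

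One technical point to pin down carefully is the \emph{adjacency bookkeeping}: I need to verify that in the standard lamination $\kappa_P$ of Example~\ref{ex:prefmaxgeo} the correspondence ``unordered pair of leaves $\longleftrightarrow$ boundary curve they are both adjacent to'' is a bijection between the three pairs $\{e_i,e_j\}$ and the three boundary curves $g_1,g_2,g_3$, and moreover that for the boundary curve $g$ paired with $\{e_i,e_j\}$, one of $e_i,e_j$ is oriented towards $g$ and the other away from $g$ — exactly the configuration required to apply Proposition~\ref{prop:lengthequ}. This is immediate from the description of $\kappa_P$ (the leaf $e_k$ spirals around $g_{k-1}$ and $g_{k+1}$, so $g_k$ is the boundary curve \emph{not} spiraled by $e_k$, hence spiraled by both $e_{k-1}$ and $e_{k+1}$), together with the orientation convention that $e_k$ spirals in the direction opposite to the orientation of $g_{k\pm1}$ and is oriented towards $g_{k+1}$. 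I expect this adjacency/orientation verification — essentially reading off Figure~\ref{fig:stdgeod} — to be the only real obstacle; the analytic content is simply the elementary observation that a sum of two non-positive limits which is also $\geq 0$ must have both summands equal to $0$, contradicting strict negativity of one of them. Once the bookkeeping is fixed, the proof is complete.
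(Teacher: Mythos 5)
Your proof is correct and follows essentially the same route as the paper: both argue by contradiction from the Bonahon--Dreyer length relations (Proposition~\ref{prop:lengthequ}), kill the triangle terms via condition (A), and use the sign-coherence of condition (B) to reach a sign contradiction with the positivity of $\log\frac{\lambda_a}{\lambda_{a+1}}(\rho_n,g)$. The only cosmetic difference is that you sum the length relations over $a$ to work with the total Hilbert length, whereas the paper fixes a single index $a$ with $\lim_n r_n\sigma^a(\rho_n,e)<0$ and appeals to (B2) to bound $\lim_n r_n\sigma^{m-a}(\rho_n,e')$; this is a packaging choice, not a genuinely different argument.
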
 
\begin{proof} This is a direct consequence of the Bonahon-Dreyer length relations applied to the boundary curves of $P$. Let us explain more. Assume, by contradiction, that there exist two leaves $e\neq e'\in\kappa_{P}$ such that
\[
\lim_{n\to \infty} r_n\sigma^a(\rho_n,e)< 0\text{, and }\lim_{n\to \infty} r_n\sigma^{a'}(\rho_n,e')< 0
\]
for some $a,a'\in\{1,\dots, m-1\}$. By definition of $\kappa$, there exists a boundary curve $g$ of $P$, oriented so that $P$ lies to the left of $g$, and such that $e$ and $e'$ spiral around $g$. Up to relabeling $e$ and $e'$, let us assume that $e$ (resp. $e'$) is oriented towards (resp. away from) $g$. Denote by $t$ and $t'$ the ideal triangles in $\cal T_{\kappa, P}$. Recall that we denote by $\lambda_a(\rho_n, g)$ the modulus of the $a$th largest eigenvalue of $\rho_n(g)$. Let $r_n>0$ be a scaling sequence given by Definition \ref{def:treetype}. Equation \ref{eqn:lengths} implies that for every $a=1,\dots, m-1$ and for every $n\in\bb N$
\begin{align*}
0<r_n\log\frac{\lambda_a}{\lambda_{a+1}}(\rho_n,g)&=r_n\sigma^a(\rho_n,e)+r_n\sigma^{m-a}(\rho_n,e')+r_n\sum_{b+c=a}\left( \tau^{(m-a)bc}(\rho_n,t)+\tau^{(m-a)bc}(\rho_n,t')\right).
\end{align*}
By Definition \ref{def:treetype}, for every $\varepsilon>0$, and for $n$ large $r_n\tau^{(m-a)bc}(\rho_n, t)$, $r_n\tau^{(m-a)bc}(\rho_n,t)<\varepsilon$. Let $a$ be an index such that  $\lim_{n\to \infty} r_n\sigma^a(\rho_n,e)< 0$. We assumed, by contradiction, that there exists $a'$ such that $\lim_n r_n\sigma^{a'}(\rho_n,e')< 0$. Definition \ref{def:treetype} implies that $\lim_n r_n\sigma^{m-a}(\rho_n,e')\leq 0$. Up to possibly choosing a smaller $\varepsilon$ and a larger $n$, we have
\[
r_n\sigma^{m-a}(\rho_n,e')<\varepsilon,\text{ and }r_n\sigma^a(\rho_n,e)<-4\varepsilon.
\]
Therefore, for $n$ large
\[
r_n\sigma^a(\rho_n,e')+r_n\sigma^{m-a}(\rho_n,e)+r_n\sum_{a+b+c=m}\left( \tau^{(m-a)bc}(\rho_n,t)+\tau^{(m-a)bc}(\rho_n,t')\right)<-4\varepsilon +\varepsilon+\varepsilon+\varepsilon<0
\]
which contradicts the Bonahon-Dreyer length relations.
\end{proof}

In other words, Lemma \ref{lem:flip} states that on each pair of pants $P$, item (B2) in Definition \ref{def:treetype} holds for at most one open leaf in $\kappa_{P}$. 

\begin{notation}\label{not:poslam} The maximal geodesic lamination $\lambda^{+}=\lambda^+(\cal P,\rho_n)$ is obtained by performing a diagonal flip to all the open leaves $e$ in the standard maximal geodesic lamination $\kappa$ such there exists $a=1,\dots, m-1$ with
\[
\lim_{n\to \infty} r_n\sigma^a(\rho_n,e)< 0.
\]
\end{notation} 

In order to prove that Equations \ref{eqn:positiveshear}-\ref{eqn:positivetriangle} hold for $\lambda^+$ for all $m\geq 2$, we will rely on the finer structure of the Fock-Goncharov parameters summarized in \S \ref{ssec:flips}. 

Let $e\in\kappa_{P}$ be \emph{any} open leaf, and consider a lift $\wt e$ to the universal cover $\wt S$. This lift is an oriented diagonal for an ideal quadrilateral in $\wt S$ composed by two adjacent ideal triangles that are lifts of triangles in $\cal T_{\lambda,P}$. Denote by $(x_{e^+},x_{e^l},x_{e^-},x_{e^r})$ the cyclically ordered vertices of this quadrilateral, with $x_{e^+}$ the attracting fixed point of $\wt e$.

By Theorem \ref{thm:poslimmap}, for every $n\in \bb N$, the quadruple of flags $(\xi_n(x_{e^+}),\xi_n(x_{e^l}),\xi_n(x_{e^-}),\xi_n(x_{e^r}))$ is positive. This sequence of positive quadruples of flags corresponds to sequences of weighted directed graphs $\cal Q_{r}(e,n)$, $r=0,\dots, r_{\max}$ as described in \S\ref{ssec:flips}. Our next step is to replace the sequences $\cal Q_{r}(e,n)$ of weighted directed graphs with simplified versions. 

Explicitly, let us start by defining the sequence of weighted directed graphs $\wt {\cal Q}_0(e, n)$ with: 
\begin{itemize}
	\item[-] vertices and arrows equal to vertices and arrows of $\cal Q_{0}(e,n)$;
	\item[-] weights on the interior vertices $v\in\rm{int}(\wt {\cal Q}_{0}(e,n))$ given by
	\[
	\wt X_n^v=
	\begin{cases}
	e^{\sigma^a(\rho_n,e)} &\text{ for }v=(a,b,0,0), \ a>0\\
	1 &\text{ otherwise}
	\end{cases}
	\]
\end{itemize}
The weighted directed graph $\wt {\cal Q}_0(e, n)$ is an asymptotic version of $\cal Q_0(e,n)$ in the sense that for every interior vertex $v\in\rm{int}(\wt {\cal Q}_{0}(e,n))$
\begin{equation}\label{eqn:asymptotic}
\lim_nr_n\log X_n^v=\lim_nr_n\log \wt X_n^v,
\end{equation}
where we denote by $X_n^v$ the sequence of weights of a vertex $v$ in $\cal Q_{0}(e,n)$.
Equation \ref{eqn:asymptotic} holds even after a mutation thanks to the following series of observations.
\begin{lem}\label{lem:limits1} Let $(r_n)$, and $(A_n)$ be sequences such that 
\[
r_n, A_n>0,  \lim_n r_n =0, \text{and }\lim_n r_n\log A_n\leq 0.
\]
Then, $\lim_n r_n \log (1+A_n)=0.$
\end{lem}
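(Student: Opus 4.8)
The plan is to do a short case analysis on the sign of $\lim_n r_n\log A_n$, exploiting only elementary monotonicity of $\log$ and the squeeze theorem.

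First I would dispense with the easy bound from below: since $A_n>0$, we have $1+A_n>1$, hence $r_n\log(1+A_n)>0$ for every $n$. So it suffices to produce a matching upper bound that tends to $0$.

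For the upper bound I would split into two cases according to the hypothesis $\lim_n r_n\log A_n\leq 0$. In the case $\lim_n r_n\log A_n<0$: then for large $n$ we have $r_n\log A_n<0$, i.e. $\log A_n<0$, i.e. $0<A_n<1$, so $1<1+A_n<2$ and therefore $0<r_n\log(1+A_n)<r_n\log 2\to 0$ since $r_n\to 0$. In the boundary case $\lim_n r_n\log A_n=0$ (or more generally $\le 0$, handled uniformly): fix $\veps>0$; for $n$ large we have $r_n\log A_n<\veps$, hence $\log A_n<\veps/r_n$, hence $A_n<e^{\veps/r_n}$, so $1+A_n<2e^{\veps/r_n}$ (using $e^{\veps/r_n}\ge 1$) and thus $r_n\log(1+A_n)<r_n\log 2+\veps$. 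Since $r_n\to 0$, for $n$ large $r_n\log 2<\veps$, giving $0<r_n\log(1+A_n)<2\veps$. As $\veps$ was arbitrary, $\lim_n r_n\log(1+A_n)=0$. (The second case actually subsumes the first, so one could present only it.)

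There is essentially no obstacle here — the only mild subtlety is to make sure the estimate is uniform enough that the $\le 0$ hypothesis (rather than $<0$) is genuinely what is used, which the $\veps$-argument above handles cleanly since $r_n\log A_n\le 0\le \veps/2$ eventually. I would write the $\veps$-version as the single proof.

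\begin{proof}
Since $A_n>0$ we have $1+A_n>1$, so $r_n\log(1+A_n)>0$ for all $n$; it remains to bound this quantity from above. Fix $\veps>0$. Because $\lim_n r_n\log A_n\le 0$, there is $N_1$ such that $r_n\log A_n<\veps$ for all $n\ge N_1$; since $r_n>0$ this gives $\log A_n<\veps/r_n$, hence $A_n<e^{\veps/r_n}$. As $\veps/r_n>0$ we have $e^{\veps/r_n}>1$, so
\[
1+A_n<2e^{\veps/r_n}\qquad (n\ge N_1),
\]
and therefore $r_n\log(1+A_n)<r_n\log 2+\veps$ for $n\ge N_1$. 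Since $r_n\to 0$, there is $N_2$ with $r_n\log 2<\veps$ for $n\ge N_2$. Thus for $n\ge\max\{N_1,N_2\}$ we have $0<r_n\log(1+A_n)<2\veps$. As $\veps>0$ was arbitrary, $\lim_n r_n\log(1+A_n)=0$.
\end{proof}
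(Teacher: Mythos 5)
Your proof is correct. The key move — for given $\veps>0$, bounding $A_n<e^{\veps/r_n}$ eventually from the hypothesis $\lim_n r_n\log A_n\le 0$ — is the same as the paper's. Where you differ is the final step: you dispatch the upper bound directly via the elementary estimate $1+A_n<2e^{\veps/r_n}$ (so $r_n\log(1+A_n)<r_n\log 2+\veps$), whereas the paper instead appeals to the separate limit formula $\lim_{x\to 0}x\log(1+C^{1/x})=\max\{0,\log C\}$ with $C=e^{K+\veps}$, where $K=\lim_n r_n\log A_n$. Your version is more self-contained and avoids needing the limit $K$ to exist (a $\limsup$ hypothesis would suffice); the paper's version packages the computation as a reusable identity because the same formula is invoked again later, in the proof of Theorem \ref{thm:d3}. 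Both are correct, and yours is arguably the cleaner stand-alone argument.
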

\begin{proof} Note that $r_n\log(1+A_n)>0$ for all $n$, so $\lim_n r_n \log (1+A_n)\geq 0$. Set $K=\lim_nr_n\log A_n$. For every $\varepsilon>0$ we can find $n$ large enough so that $A_n<e^{(K+\varepsilon)/r_n}$, which implies
\[
\lim_n r_n \log (1+A_n)\leq \lim_n r_n\log\left(1+e^{(K+\varepsilon)/r_n}\right).
\]
Observe that
\begin{equation}\label{eqn:useful}
\lim_{x\to 0}x\log(1+C^{1/x})=
\begin{cases}
0&\text{ if } 0<C\leq 1\\
\log C &\text{ if }C>1
\end{cases}
\end{equation}
The result follows by setting $C=e^{K+\varepsilon}$, since $\veps$ was arbitrary.
\end{proof}
\begin{cor}\label{cor:limits2} Let $(r_n)$, and $(B_n)$ be sequences such that 
\[
r_n, B_n>0,\ \lim_n r_n=0,\text{and }\lim_n r_n\log B_n\geq 0.
\]
Then, $\lim_n r_n \log (1+B_n)=\lim_n r_n\log B_n.$
\end{cor}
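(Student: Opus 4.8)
The plan is to deduce Corollary~\ref{cor:limits2} from Lemma~\ref{lem:limits1} by factoring out the dominant term. First I would write $1+B_n = B_n(1 + 1/B_n)$, so that
\[
r_n\log(1+B_n) = r_n\log B_n + r_n\log(1 + 1/B_n).
\]
Setting $A_n = 1/B_n$, I observe that $A_n>0$, $\lim_n r_n=0$ (given), and $\lim_n r_n\log A_n = -\lim_n r_n\log B_n \leq 0$ by hypothesis. Hence Lemma~\ref{lem:limits1} applies to the sequence $(A_n)$ and gives $\lim_n r_n\log(1+1/B_n)=0$. Taking the limit in the displayed identity then yields $\lim_n r_n\log(1+B_n) = \lim_n r_n\log B_n$, as desired.

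One small technical point worth checking: Lemma~\ref{lem:limits1} is stated for sequences where $\lim_n r_n\log A_n$ exists (it writes $K=\lim_n r_n\log A_n$). Since the hypothesis of the corollary posits that $\lim_n r_n\log B_n$ exists and lies in $[0,\infty)$ or is $+\infty$, I should split into the two cases. If $\lim_n r_n\log B_n$ is finite, then $\lim_n r_n\log A_n$ exists and equals its negative, so the lemma applies directly. If $\lim_n r_n\log B_n = +\infty$, then $B_n\to\infty$, and one argues directly: $0 \le r_n\log(1+1/B_n) \le r_n\log 2 \to 0$ once $B_n \ge 1$, so again $\lim_n r_n\log(1+1/B_n)=0$ and the identity closes the argument (with both sides equal to $+\infty$). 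Alternatively, if the convention in the paper is that all such limits are assumed finite, the second case is vacuous.

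I do not expect any genuine obstacle here — this is a routine algebraic manipulation reducing the corollary to the already-proven lemma. The only thing to be careful about is the edge case $B_n \to \infty$ (equivalently $A_n \to 0$), where one wants to be sure the statement and proof of Lemma~\ref{lem:limits1} still cover $K = -\infty$; if not, the direct squeeze estimate above handles it in one line. The whole proof should be two or three sentences once the factorization $1+B_n = B_n(1+1/B_n)$ is written down.
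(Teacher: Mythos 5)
Your proof is correct and follows essentially the same route as the paper: the paper's proof of the corollary is the one-liner ``Apply Lemma~\ref{lem:limits1} with $A_n=1/B_n$,'' and your factorization $1+B_n = B_n(1+1/B_n)$ is exactly the unwinding of that substitution. Your extra care about the $\lim_n r_n\log B_n = +\infty$ case is a sensible precaution that the paper leaves implicit, but it does not change the substance of the argument.
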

\begin{proof} Apply Lemma \ref{lem:limits1} with $A_n=1/B_n$. 
\end{proof}

\begin{prop}\label{prop:asymptotics} Let $v$ be a vertex in the interior of $\cal Q_0$. Denote by $X_n^w$ and $\wt X_n^w$ the weights of a vertex $w$ for the weighted directed graphs $\cal Q_0(e,n)$ and $\wt {\cal Q}_0(e,n)$, respectively. Then
\[
\lim_n r_n\log \mu_v(X_n^w)=\lim_n r_n\log \mu_v(\wt X_n^w).
\]
\end{prop}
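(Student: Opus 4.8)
The plan is to compare the mutation rule applied to the two weighted graphs vertex by vertex, using the fact established by Equation \ref{eqn:asymptotic} that $\lim_n r_n\log X_n^w = \lim_n r_n\log\wt X_n^w$ for every interior vertex $w$, together with the sign information on these limits coming from the tree-type hypothesis and Lemma \ref{lem:flip}. Recall the mutation formula from Definition \ref{def:vmutation}: $\mu_v(X^w)$ is $1/X^v$ when $w=v$, is $X^w(1+X^v)$ when $v\to w$, is $X^w\cdot\frac{X^v}{1+X^v}$ when $w\to v$, and is $X^w$ otherwise. Taking $r_n\log$ of each case and passing to the limit, the only subtlety is controlling $\lim_n r_n\log(1+X_n^v)$ versus $\lim_n r_n\log(1+\wt X_n^v)$; everything else is additive in $r_n\log(\cdot)$ and so immediately inherits Equation \ref{eqn:asymptotic}.

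First I would record that by construction $\wt X_n^v$ has $\lim_n r_n\log\wt X_n^v$ equal to $\lim_n r_n\sigma^a(\rho_n,e)$ when $v=(a,b,0,0)$ with $a>0$, and equal to $0$ otherwise; and that by Equation \ref{eqn:asymptotic} the same limit equals $\lim_n r_n\log X_n^v$. Now split into cases according to the sign of $L := \lim_n r_n\log X_n^v = \lim_n r_n\log\wt X_n^v$. If $L\le 0$, then Lemma \ref{lem:limits1} applies to both $A_n=X_n^v$ and $A_n=\wt X_n^v$, giving $\lim_n r_n\log(1+X_n^v)=0=\lim_n r_n\log(1+\wt X_n^v)$. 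If $L\ge 0$, then Corollary \ref{cor:limits2} applies to both $B_n=X_n^v$ and $B_n=\wt X_n^v$, giving $\lim_n r_n\log(1+X_n^v)=L=\lim_n r_n\log(1+\wt X_n^v)$. Either way
\[
\lim_n r_n\log(1+X_n^v)=\lim_n r_n\log(1+\wt X_n^v),
\]
and consequently $\lim_n r_n\log\frac{X_n^v}{1+X_n^v}=\lim_n r_n\log\frac{\wt X_n^v}{1+\wt X_n^v}$ as well, since $\lim_n r_n\log X_n^v=\lim_n r_n\log\wt X_n^v$.

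With this in hand I would finish by checking each of the four cases of the mutation rule. For $w=v$: $\lim_n r_n\log\mu_v(X_n^w)=-\lim_n r_n\log X_n^v=-\lim_n r_n\log\wt X_n^v=\lim_n r_n\log\mu_v(\wt X_n^w)$. For $v\to w$: $\lim_n r_n\log\mu_v(X_n^w)=\lim_n r_n\log X_n^w+\lim_n r_n\log(1+X_n^v)$, and both summands match the corresponding ones for $\wt{\cal Q}_0(e,n)$ by Equation \ref{eqn:asymptotic} and the displayed identity above. The case $w\to v$ is identical using $\lim_n r_n\log\frac{X_n^v}{1+X_n^v}$. The case ``otherwise'' is trivial. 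This proves the proposition.

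The main obstacle is purely bookkeeping: one must make sure the sign hypothesis needed to invoke Lemma \ref{lem:limits1} or Corollary \ref{cor:limits2} is actually available, i.e.\ that $L=\lim_n r_n\log X_n^v$ is well-defined and has a definite sign for the vertices $v$ that matter. For the asymptotic graph this is built into the definition of $\wt X_n^v$, and for the genuine graph $\cal Q_0(e,n)$ it follows from Equation \ref{eqn:asymptotic}; the only weights that are not eventually constant (in the limit sense) are those of the form $v=(a,b,0,0)$, whose limit is $\lim_n r_n\sigma^a(\rho_n,e)$, which the tree-type condition (B) guarantees is either non-negative for all $a$ or non-positive for all $a$. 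So in every case exactly one of Lemma \ref{lem:limits1} or Corollary \ref{cor:limits2} applies to both sequences simultaneously, and the argument goes through.
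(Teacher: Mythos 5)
Your proof is correct and takes essentially the same approach as the paper: split on the sign of $L=\lim_n r_n\log X_n^v=\lim_n r_n\log\wt X_n^v$, apply Lemma \ref{lem:limits1} when $L\le 0$ and Corollary \ref{cor:limits2} when $L\ge 0$ to both sequences, and conclude case by case for the mutation rule. You organize it a bit more cleanly by first isolating the identity $\lim_n r_n\log(1+X_n^v)=\lim_n r_n\log(1+\wt X_n^v)$ and then handling all four mutation cases at once, whereas the paper writes out Cases 1 and 2 explicitly and declares the rest similar; the content is the same.
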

\begin{proof} By definition, a mutation only changes the weights of five vertices in $\cal Q_0$. See Figure \ref{fig:mutation}. There are a few cases to consider.

\textbf{Case 1:} Assume $\lim_n r_n \log X^v_n=\lim_n r_n \log \wt X^v_n\leq 0$ and $\mu_v(X_n^w)=X_n^w(1+X^v_n)$. Then,
\begin{align*}
\lim_nr_n\log \mu_v(X_n^w)&=\lim_nr_n\log X_n^w(1+X^v_n)\\
&=\lim_nr_n\log X_n^w + \lim_nr_n\log (1+X^v_n)\\
&=\lim_nr_n\log X_n^w\\
&=\lim_nr_n\log \wt X_n^w\\
&=\lim_nr_n\log \wt X_n^w+\lim_nr_n\log (1+\wt X^v_n)=\lim_nr_n\log \wt \mu_v(\wt X_n^w)
\end{align*}
where we used Lemma \ref{lem:limits1} on the third and fifth line.

\textbf{Case 2:} Assume  $\lim_n r_n \log X^v_n=\lim_n r_n \log \wt X^v_n\geq 0$ and $\mu_v(X_n^w)=X_n^w(1+X^v_n)$. Then,
\begin{align*}
\lim_nr_n\log \mu_v(X_n^w)&=\lim_nr_n\log X_n^w(1+X^v_n)\\
&=\lim_nr_n\log X_n^w + \lim_nr_n\log (1+X^v_n)\\
&=\lim_nr_n\log X_n^w+\lim_nr_n\log X^v_n\\
&=\lim_nr_n\log \wt X_n^w+\lim_nr_n\log \wt X^v_n\\
&=\lim_nr_n\log \wt X_n^w+\lim_nr_n\log (1+\wt X^v_n)=\lim_nr_n\log\mu_v(\wt X_n^w)
\end{align*}
where we used Corollary \ref{cor:limits2} on the third and fifth line.

The remaining cases $w=v$ and $\mu_v(X_n^w)=X_n^w\frac{X_n^v}{1+X^v_n}$ can be checked similarly. We omit the details.
\end{proof}

Proposition \ref{prop:asymptotics} motivates the definition of \emph{asymptotic mutation at a vertex $v$:}
\[
\wt \mu_v(X_n^w)= 
\begin{cases}
1/X_n^v&\text{ if }v=w\\
X_n^wX_n^v &\text{ if $v$ is connected to $w$, and }\lim_nr_n\log\mu_v(X_n^w)=\lim_nr_n\log X_n^wX_n^v\\
X_n^w &\text{ otherwise}
\end{cases}
\]
With these notations, Proposition \ref{prop:asymptotics} implies
\begin{equation}\label{eqn:asmutation}
\lim_nr_n\log(\mu_v(X_n^w))=\lim_nr_n\log(\wt \mu_v(\wt X_n^w)).
\end{equation}

\begin{thm}\label{thm:pmflips} Let $(\rho_n)$ be a sequence of Hitchin representations of tree-type with respect to the maximal geodesic lamination $\kappa$ described in Example \ref{ex:prefmaxgeo}. Let $\cal Q_0(e,n)$ be the weighted directed graph associated to $\rho_n$ by choosing an open leaf $e$ in $\kappa_o$. Let $\lambda$ be the maximal geodesic lamination obtained from $\kappa$ by diagonal flip of the open leaf $e$. Then,
\[
\lim_nr_n\tau^{abc}(\rho_n,t)=0,\text{ for all the ideal triangles }t\in\cal T_{\lambda}.
\]
\end{thm}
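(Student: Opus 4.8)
I would reduce the statement to a purely tropical computation on the simplified weighted graphs $\wt{\cal Q}_0(e,n)$ of \S\ref{ssec:flips}, into which the hypothesis (B) of tree-type enters only as a sign condition on the double-ratio weights. Fix the open leaf $e$ of $\kappa$ that is flipped, a lift $\wt e$, and the induced quadrilateral with cyclically ordered vertices $(x_{e^+},x_{e^l},x_{e^-},x_{e^r})$; put $F^{(n)}_\bullet=\xi_n(x_\bullet)$. A triangle of $\cal T_\lambda$ lying in a pair of pants not containing $e$ is again a triangle of $\cal T_\kappa$, so hypothesis (A) already yields the conclusion for it; hence it suffices to treat the two triangles of $\cal T_\lambda$ inside the pair of pants containing $e$. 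After the relabelling $(f^+,f^l,f^-,f^r)=(e^l,e^-,e^r,e^+)$, the triple ratios of these two triangles are precisely the weights attached to the interior vertices $v=(a,b,c,0)$ with $c>0$ and $v=(a,b,0,d)$ with $d>0$ of the weighted graph $\cal Q_0(f,n)$, and by Lemma \ref{lem:strata} the latter is obtained from $\cal Q_0(e,n)$ by $\nu_{r_{\max}-1}\circ\dots\circ\nu_0$. Thus the goal is to show $\lim_n r_n\log X^v_n=0$ for every such $v$, where $X^v_n$ denotes the corresponding weight of $\cal Q_0(f,n)$.

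\emph{Passage to the asymptotic graph.} Relation (\ref{eqn:asmutation}) computes the limit of $r_n\log$ under one mutation in terms of the asymptotic mutation $\wt\mu_v$; since $\wt{\cal Q}_0(e,n)$ has the same underlying quiver as $\cal Q_0(e,n)$ and quiver mutation is deterministic, this property persists along the entire sequence $\nu_{r_{\max}-1}\circ\dots\circ\nu_0$, so iterating (\ref{eqn:asmutation}) reduces us to computing in the asymptotic graph with asymptotic mutations. Writing $\ell(w)=\lim_n r_n\log\wt X^w_n$, the initial data are $\ell\big((a,b,0,0)\big)=s_a:=\lim_n r_n\sigma^a(\rho_n,e)$ for $a=1,\dots,m-1$ and $\ell(w)=0$ at every other interior vertex; hypothesis (B) places us in one of the two cases (B1) all $s_a\ge 0$, or (B2) all $s_a\le 0$. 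Unwinding the analysis in the proof of Proposition \ref{prop:asymptotics}, the asymptotic mutation $\wt\mu_v$ transforms $\ell$ by $\ell(v)\mapsto-\ell(v)$, $\ell(w)\mapsto\ell(w)+\max(\ell(v),0)$ if $v\to w$, $\ell(w)\mapsto\ell(w)+\min(\ell(v),0)$ if $w\to v$, and $\ell(w)\mapsto\ell(w)$ otherwise; this is precisely tropical $Y$-seed mutation. So the theorem is equivalent to the combinatorial assertion: \emph{starting from the above tropical seed on $\cal Q_0$, the composite tropical mutation corresponding to the diagonal flip assigns weight $0$ to every vertex with $c>0$ and to every vertex with $d>0$.}

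\emph{The combinatorial claim.} I would prove this by induction on $m$, using the nested family $\cal Q_0\supset\cal Q_1\supset\dots\supset\cal Q_{r_{\max}}$ together with the identification of $\cal Q_1$ with the analogous configuration in rank $m-2$ via $(a,b,c,d)\mapsto(a-1,b-1,c,d)$, under which $\nu_{r_{\max}-1}\circ\dots\circ\nu_1$ becomes the rank $m-2$ flip and triangle (resp.\ double-ratio) vertices correspond to triangle (resp.\ double-ratio) vertices. The cases $m=2,3$ are immediate: there is nothing to prove for $m=2$, while for $m=3$ the flip is the four-vertex mutation of Example \ref{exe:postriang3D}, after which the two triple-ratio weights equal $\pm\big(\max(0,s_2,s_1+s_2)-\max(0,s_1,s_1+s_2)\big)$, which vanishes precisely when $s_1,s_2$ have a common sign. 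For the inductive step one must check that the first block $\nu_0=\bar\mu_{m-2}\circ\dots\circ\bar\mu_0$ leaves the ``outer layer'' of triangle vertices at weight $0$ and hands to $\cal Q_1$ a seed of the same inductive shape (triangle vertices at $0$, double-ratio vertices with a common sign), so that the rank $m-2$ statement finishes the job for the blocks $\nu_{r_{\max}-1}\circ\dots\circ\nu_1$. Case (B2) reduces to (B1) by negating all tropical weights, which replaces $\cal Q_0$ by the opposite quiver — for which the flip sequence and the triangle/double-ratio dichotomy are unchanged — so the (B1) argument applies verbatim and the (negated) output weights still vanish. An alternative to the induction is to argue directly that each flipped triple ratio is a subtraction-free rational function $P/Q$ in the $\kappa$-coordinates whose numerator and denominator share their extreme monomials in the double-ratio variables (both containing the constant term and a common monomial using each $D^a(e)$), so that $P^{\mathrm{trop}}$ and $Q^{\mathrm{trop}}$ agree, on the locus where all triangle weights vanish, throughout each of the cones $\{s_1,\dots,s_{m-1}\ge 0\}$ and $\{s_1,\dots,s_{m-1}\le 0\}$.

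\emph{The main obstacle.} The technical heart is the inductive step, namely showing that $\nu_0$ propagates the ``all double-ratio weights of one sign'' property down to $\cal Q_1$ without producing nonzero triangle weights. What must be defeated is a sign-mixed cancellation of the type exhibited in Example \ref{exe:postriang3D}, where a tropical numerator $\max(0,s_2,s_1+s_2)$ fails to cancel the tropical denominator $\max(0,s_1,s_1+s_2)$ when $s_1\le 0\le s_2$; hypothesis (B1)/(B2) is exactly what rules this out, and the labour is to verify that it keeps doing so at every stratum $\bar\mu_0,\dots,\bar\mu_{m-2}$ of the flip. I expect this bookkeeping — tracking the tropical weights through the explicit strata mutations — to be the only substantial difficulty.
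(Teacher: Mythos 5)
Your overall approach is the same as the paper's: reduce to the asymptotic graph $\wt{\cal Q}_0(e,n)$ via Proposition \ref{prop:asymptotics}/Equation \ref{eqn:asmutation}, track tropical weights through the strata mutations $\nu_0,\dots,\nu_{r_{\max}-1}$, and use the common-sign hypothesis (B) to tame the tropical additions. Your reformulation as tropical $Y$-seed mutation and your reduction of case (B2) to (B1) by negating weights and passing to the opposite quiver are clean and make the paper's "the other case is analogous" precise — that part is a genuine improvement in clarity.

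The gap is in the inductive step, which you flag as "the only substantial difficulty" but then assert as though it will work out. Your claim that $\nu_0$ "leaves the outer layer of triangle vertices at weight $0$ and hands to $\cal Q_1$ a seed of the same inductive shape" is not what the calculation gives. The paper's explicit tracking (Equation \ref{eqn:asvertexmut} and the formula for $\wt{\cal Q}_r(e,n)$) shows that after $\nu_0$ the triangle vertices sitting on $\partial\cal Q_1$, i.e.\ those of the form $(1,b,m-b-1,0)$ and $(a,1,0,m-a-1)$, acquire nonzero asymptotic weight $e^{-\sigma^{\alpha}_n(e)}$ rather than $1$. These vertices are interior to $\cal Q_0$, carry triple-ratio coordinates of the flipped triangulation, and are no longer mutation \emph{sites} in $\nu_1,\dots,\nu_{r_{\max}-1}$, but they are not frozen either: mutations at adjacent interior vertices of $\cal Q_1$ continue to act on their weights. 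Your rank-drop induction $(a,b,c,d)\mapsto(a-1,b-1,c,d)$ sees only $\rm{int}(\cal Q_1)$, so it silently discards exactly the vertices that have just picked up a sign, and the "same inductive shape" picture does not hold for them. The paper addresses this by propagating an explicit pattern of weights layer by layer (the $\wt{\cal Q}_r(e,n)$ formula) rather than by a black-box reduction to rank $m-2$; reproducing that is precisely the bookkeeping you defer. So: right reduction, right combinatorial target, but the inductive hypothesis as stated is false at the first step, and the argument that is supposed to be routine is where the theorem actually lives. Your alternative via subtraction-free rational functions with matching extreme monomials is a reasonable heuristic (it checks out for $m=3$ from Example \ref{exe:postriang3D}), but it is likewise asserted rather than proved for general $m$.
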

\begin{proof} We start by easing notation and writing $\tau^{abc}_n(\cdot)$ and $\sigma^a_n(\cdot)$ instead of $\tau^{abc}(\rho_n,\cdot)$ and $\sigma^a(\rho_n,\cdot)$. 

Let us assume that for every $a=1,\dots, m-1$,
\[
\lim_nr_n\sigma_n^a(e)\leq 0.
\]
We omit the details of the proof of the case $\lim_nr_n\sigma_n^a(e)\geq 0$ as it is analogous. Furthermore, let us first focus on the vertices of the form $(a,b,c,0)$ in the discrete parallelogram $\cal Q_0$. 

We are interested in understanding the asymptotic behavior of $\nu_0(X^v_n)$, where $\nu_{0}=\bar \mu_{m-2}\circ\dots\circ\bar \mu_0$ is the composition of strata-mutations described in \S \ref{ssec:flips}, and $X_n^v$ is the weight of $v$ in $\cal Q_0(e,n)$.

Set $v_{\beta,c}=(\alpha-c,\beta,c,0)$ for $\beta=1,\dots, m-1$, $\alpha=m-\beta$ and $c=0,\dots, \alpha$. We claim that, with our assumption on the asymptotic behavior of the shearings on $e$, we can replace $\nu_0$ with mutations
\[
\wt \mu_{v_{\beta,\alpha}}\circ\dots\circ \wt \mu_{v_{\beta,0}}
\]
as we let $\beta$ vary between $1$ and $m-1$. To explain why this is the case, we start by observing that if the weight $X^v_n$ of the vertex $v=(a-1,b+1,c,0)$ is such that 
\[
\lim_nr_n\log X^v_n\leq 0,
\]
then the weight $X^w_n$ of the vertex $w=(a-1,b,c+1,0)$ is such that $\wt \mu_v(X^w_n)=X^w_n$. In this case, the result of a $c$-stratum mutation on the weight $X^w_n$ is the same as the result of a mutation at the vertex $(a,b,c,0)$.

Fix $\beta=1,\dots, m-1$. Assume that, after performing asymptotic mutations $\wt \mu_{v_{\beta,c}}\circ\dots\circ \wt \mu_{v_{\beta,0}}$, the weights $\wt Y^\gamma_n$ of the vertices $v_{\beta,\gamma}$ are such that 
\[
\wt Y^\gamma_n=
\begin{cases}
e^{-\sigma^{\alpha}_n(e)}&\text{ for }\gamma=c-1\\
e^{\sigma^{\alpha}_n(e)}&\text{ for }\gamma=c\\
1&\text{ for }\gamma=c+1
\end{cases}
\]
Note that this is the case, for example, when $c=1$. Then, by definition of asymptotic mutation
\begin{equation}\label{eqn:asvertexmut}
\wt \mu_{v_{\beta,c}}(\wt Y^\gamma_n)=
\begin{cases}
1&\text{ for }\gamma=c-1\\
e^{-\sigma^{\alpha}_n(e)}&\text{ for }\gamma=c\\
e^{\sigma^{\alpha}_n(e)}&\text{ for }\gamma=c+1
\end{cases}
\end{equation}

Applying Equations \ref{eqn:asvertexmut} iteratively we get that for every $\beta\in \{1,\dots, m-1\}$, if $\wt X^{v_{\beta,c}}_n$ denotes the weight of the vertex $v_{\beta,c}$ in $\wt {\cal Q}_0(e,n)$, then
\[
(\wt \mu_{v_{\beta,\alpha}}\circ\dots\circ\wt\mu_{v_{\beta,0}})(\wt X^{v_{\beta,c}}_n)=
\begin{cases}
1&\text{ for } 0<c<\alpha-1\\
e^{-\sigma^{\alpha}_n(e)}&\text{ for }c=\alpha-1\\
e^{\sigma^{\alpha}_n(e)}&\text{ for }c=\alpha
\end{cases}
\]
The same reasoning applied to the vertices of the form $(a,b,0,d)$ in $\cal Q_0$ shows that for every $\alpha\in\{1,\dots, m-1\}$ the weights of $\wt{\cal Q}_0(e,n)$ after applying $\nu_0$ behave asymptotically as
\[
\begin{cases}
1&\text{ for }v=(\alpha, m-\alpha-d,0, d),\ 0<d<m-\alpha-1\\
e^{-\sigma^{\alpha}_n(e)}&\text{ for }v=(\alpha,1,0,m-\alpha-1)\\
e^{\sigma^{\alpha}_n(e)}&\text{ for }v=(\alpha, 0,0, m-\alpha)
\end{cases}
\]
It remains to describe the asymptotic behavior of the weight of a vertex of the form $v=(a,b,0,0)$ on the preferred diagonal of $\wt {\cal Q}_0(e,n)$ after applying $\nu_0$. 

When $a$ or $b$ is equal to 1, the only mutations we need to check are $\wt\mu_w$ with $w=(a,b-1,0,1)$ and $\wt \mu_v$. An easy computation shows
\[
(\wt\mu_{w}\circ\wt\mu_v)(\wt X^v_n)=1.
\] 

Assume $a,b>1$ and set $v'=(a-1,b,1,0)$ and $v''=(a,b-1,0,1)$. Then,
\begin{align*}
\wt X^v_n=e^{\sigma^a_n(e)}&\stackrel{\wt \mu_v}{\longmapsto} e^{-\sigma^a_n(e)}\stackrel{\wt \mu_{v'}}{\longmapsto} e^{-\sigma^a_n(e)} \cdot e^{\sigma^a_n(e)}=1\stackrel{\wt \mu_{v''}}{\longmapsto} e^{\sigma^a_n(e)}\cdot 1=e^{\sigma^a_n(e)}.
\end{align*}

We can iteratively define weighted directed graphs $\wt {\cal Q}_{r}(e,n)$ for $r=1,\dots, r_{\max}$ with $r_{\max}=\lfloor \frac{m}{2}\rfloor$ such that
\begin{itemize}
	\item[-] the vertices and arrows of $\wt {\cal Q}_{r}(e,n)$ are the vertices and arrows of ${\cal Q}_{r}(e,n)$
	\item[-] the weights of $\wt {\cal Q}_{r}(e,n)$ are given by replacing the result of $\nu_{r-1}$ applied to $\wt {\cal Q}_{r-1}(e,n)$ with their asymptotic counterpart. More precisely, if $\wt X^v_n$ is the weight of $v$ in $\wt{\cal Q}_{r-1}(e,n)$ we set the weight of $v$ in $\wt {\cal Q}_{r}(e,n)$ to be
	\[
	\wt Y_n^v=
	\begin{cases}
	1&\text{ if }\lim_nr_n\log\nu_{r-1}(\wt X^v_n)=0\\
	e^{\sigma^a_n(e)}&\text{ if }\lim_nr_n\log\nu_{r-1}(\wt X^v_n)=\lim_nr_n\sigma^a_n(e)\\
	e^{-\sigma^a_n(e)}&\text{ if }\lim_nr_n\log\nu_{r-1}(\wt X^v_n)=-\lim_nr_n\sigma^a_n(e)\\
	\end{cases}
	\]
\end{itemize}
The same argument described in the case of $\wt{\cal Q}_0(e,n)$ shows that $\wt {\cal Q}_{r}(e,n)$ has weights 
\[
\begin{cases}
e^{\sigma^a_n(e)}&\text{ for }v=(a,b,0,0),\ a>r\\
e^{-\sigma^{m-b-r}_n(e)}&\text{ for }v=(r, b,m-b-r,0),\\
e^{-\sigma^a_n(e)}&\text{ for }v=(a, r,0,m-a-r)\\
1&\text{ otherwise}
\end{cases}
\]
Applying the above formulas for all the $r=0,\dots, r_{\max}$, we obtain that every vertex $v=(a,b,c,d)$ in $\rm{int}(\cal Q_0)$ with $a\neq b$ has weight $1$ after performing the asymptotic version of the transformation $\nu_{r_{\max}-1}\circ\dots\circ\nu_0$. The result then follows by Lemma \ref{lem:strata} and Proposition \ref{prop:asymptotics}.
\end{proof}

\begin{proof}[Proof of Theorem \ref{thm:FLP}]

Let $\lambda^+$ be the maximal geodesic lamination defined in Notation \ref{not:poslam}.

We wish to prove that Equations \ref{eqn:positiveshear} and  \ref{eqn:positivetriangle} hold for $\lambda^+$. Equation \ref{eqn:positivetriangle} holds for $\lambda^+$ because of Theorem \ref{thm:pmflips}. Equation \ref{eqn:positiveshear} follows from the Bonahon-Dreyer length relations as follows. 

Suppose there exists a pair of pants $P\in\cal P$, and open leaves $e,e'\in\kappa_{P}$ such that
\[
\lim_nr_n\sigma_n^a(e)\leq 0,\text{and }\lim_nr_n\sigma_n^a(e')\geq 0.
\]
It follows from the proof of Theorem \ref{thm:pmflips} and from Remark \ref{rmk:generalflip} that the double ratios $\bar{\sigma}^{a}_n(e')$ of $e'$ with respect to $\lambda^+$ are such that
\[
\lim_{n} r_n\bar{\sigma}^{a}_n(e')=\lim_n r_n(\sigma^a_n(e')+\sigma^{m-a}_n(e))
\]
Let $t,t'\in\cal T_{\kappa,P}$ and let $g$ be the boundary geodesic oriented so that $P$ lies to its left and such that $e'$ and $e$ spiral around $g$. By the Equalities \ref{eqn:lengths} and Definition \ref{def:treetype}
\begin{align*}
\lim_nr_n \bar{\sigma}^{a}_n(e')&=\lim_nr_n \left(\sigma_n^a(e)+\sigma_n^{m-a}(e')+\sum_{b+c=a}\left(\tau^{(m-a)bc}(t)+\tau^{(m-a)bc}(t')\right)\right)\\
&=\lim_nr_n\log\frac{\lambda_b}{\lambda_{b+1}}(\rho_n,g)\geq 0
\end{align*}
where $b=a$ or $m-a$. This shows that $\lim_nr_n\bar{\sigma}^a(e')\geq 0$, as desired.
\end{proof}

\section{Asymptotics of Hilbert length}\label{ssec:hilbert}

We now wish to apply the results of \S \ref{ssec:pairofpants} to describe the asymptotic behavior of the \emph{Hilbert length} 
\[
\ell_H(\rho_n,c)=\log\frac{\lambda_1}{\lambda_m}(\rho_n, c)
\] 
where $(\rho_n)$ is a sequence of Hitchin representations of tree-type and $c$ is a conjugacy class in $\pi_1(S)$.

\begin{thm}\label{thm:action} Let $(\rho_n)$ be a sequence of Hitchin presentations of tree-type and let $\lambda^+$ be the maximal geodesic lamination given by Theorem \ref{thm:FLP}. Let $c$ be a conjugacy class in $\pi_1(S)$ such that
\[
\lim_nr_n\ell_H(\rho_n,c)\in [0,\infty).
\]
and either of the following assumptions holds.
\begin{enumerate}[a.]
	\item\label{cond:anyd} The curve $c$ has zero intersection number with the closed leaves in $\lambda^+_c$.
	\item\label{cond:d3} The sequence $(\rho_n)$ is contained in $\sf{Hit}_3(S)$, $\lim_nr_n\sigma^a(\rho_n,d)\geq 0$ for all $a=1,2$, and $d\in\lambda^+_c$, and there exists $C>0$ such that for all $a=1,2$, and $d\in\lambda^+_c$
	\[
	\log\frac{\lambda_a}{\lambda_{a+1}}(\rho_n,d)>C
	\] 
\end{enumerate} 
Then, there exists a finite set $\cal E$ of leaves of $\wt \lambda^+$ such that
\[
\lim_nr_n\ell_H(\rho_n,c)=\lim_nr_n \sum_{e\in\cal E}\sigma(\rho_n,e),
\]
where $\sigma(\rho_n,e)=\sigma^1(\rho_n,e)+\dots+\sigma^{m-1}(\rho_n,e)$.
\end{thm}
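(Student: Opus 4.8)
The plan is to find a well-adapted totally positive representative of $\rho_n(c)$ in $\mathrm{GL}(m,\bb R)$ following the strategy of Fock--Goncharov \cite{FG06}, and then to read off the Hilbert length from the entries of that matrix. Fix a representative $\gamma$ of the conjugacy class $c$ and a lift $\wt\gamma$ acting on $\wt S$; let $x_\pm$ be its attracting and repelling fixed points in $\bdry\wt S$. I would first consider the axis of $\wt\gamma$ with respect to an auxiliary hyperbolic metric and list, in cyclic order along the axis, the leaves $\wt e_1,\dots,\wt e_k$ of $\wt\lambda^+$ it crosses in one fundamental period, together with the triangles $\wt t_0,\dots,\wt t_k$ of $\wt{\cal T}_{\lambda^+}$ it passes through. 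Because $\lambda^+$ extends the pants decomposition $\cal P$, in Case (\ref{cond:anyd}) the curve $c$ misses every closed leaf and so the axis only crosses open leaves, which is precisely the situation where Theorem \ref{thm:FLP} gives $\lim_n r_n\sigma^a(\rho_n,e)\ge 0$ and $\lim_n r_n\tau^{abc}(\rho_n,t)=0$ for all the relevant leaves and triangles; in Case (\ref{cond:d3}) the extra hypotheses on $\sigma^a(\rho_n,d)$ and $\log(\lambda_a/\lambda_{a+1})(\rho_n,d)$ for $d\in\lambda^+_c$ will play the role of Theorem \ref{thm:FLP} at the closed leaves (this is where Theorem \ref{thm:d3}, cited in the introduction, enters).

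Next I would write $\rho_n(\gamma)$, up to conjugacy, as an alternating product
\[
\rho_n(\gamma)= \prod_{i} E(\sigma^\bullet_n(e_i))\, T(\tau^{\bullet\bullet\bullet}_n(t_i)),
\]
where $E(\cdot)$ is the diagonal ``shearing'' matrix built from $e^{\sigma^a_n(e_i)}$ and $T(\cdot)$ is a unipotent-type ``triangle'' matrix whose off-diagonal entries are (Laurent) polynomials in the $e^{\tau^{abc}_n(t_i)}$ with positive coefficients; this is exactly the edge/triangle matrix decomposition of \cite[\S 9]{FG06} adapted to a maximal geodesic lamination, and the positivity statements of Theorems \ref{thm:posflags} and \ref{thm:poslimmap} guarantee all entries involved are positive. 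The Hilbert length is $\ell_H(\rho_n,c)=\log\lambda_1(\rho_n,\gamma)-\log\lambda_m(\rho_n,\gamma)$, and since the matrix above is totally positive its top eigenvalue is comparable (up to a bounded multiplicative factor independent of $n$, by a Perron--Frobenius/cone-contraction argument) to its largest entry, and symmetrically for the bottom eigenvalue via $\gamma^{-1}$. Multiplying the factors out, the largest entry is, up to bounded factors, $\prod_i e^{\sigma^1_n(e_i)}$ times a sum of monomials in the $e^{\tau^{abc}_n(t_i)}$ and $e^{-\sigma^a_n(e_i)-\sigma^b_n(e_i)}$ type terms coming from the triangle matrices; the analogous statement for $\gamma^{-1}$ produces $\prod_i e^{-\sigma^{m-1}_n(e_i)}$. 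Hence $r_n\ell_H(\rho_n,c) = r_n\sum_i\big(\sigma^1_n(e_i)+\sigma^{m-1}_n(e_i)\big) + r_n\log(\text{sum of monomials}) + O(r_n)$, and I would iterate this over all $a$ rather than just $a=1,m-1$ to recover the full sum $\sigma(\rho_n,e)=\sum_a\sigma^a_n(e)$ when the eigenvalue gaps are tracked simultaneously.

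The heart of the argument — and the main obstacle — is controlling the ``sum of monomials'' error term: I need $r_n\log(\text{that sum})\to 0$. Each monomial has the shape $\exp\big(\sum (\text{triangle parameters}) - \sum(\text{partial sums of shearings})\big)$, and while Definition \ref{def:treetype}(A) together with Theorem \ref{thm:FLP} kills $r_n\tau^{abc}_n$, the negative shearing contributions $-r_n\sigma^a_n(e_i)$ are exactly the quantities Theorem \ref{thm:FLP} shows are $\le 0$ in the limit, so each such monomial's $r_n\log$ is $\le 0$ asymptotically; then Lemma \ref{lem:limits1} (applied to $1+(\text{monomial})$, and inductively to a finite sum) forces the whole correction to $0$. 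The delicate point is that the degeneration $r_n\to 0$ can make the cone-contraction constant in the Perron--Frobenius estimate blow up, so I would instead argue that in the asymptotic cone the product of positive matrices converges, after rescaling, to a product of ``tropicalized'' matrices acting on the building $\cal B_m$, and that the translation length of this limiting isometry is precisely $\lim_n r_n\sum_{e\in\cal E}\sigma(\rho_n,e)$ with $\cal E=\{e_1,\dots,e_k\}$ — this is the point where the machinery of \S\ref{ssec:flips} and Proposition \ref{prop:asymptotics} (asymptotic mutations) replaces the genuine matrix computation with a finite tropical one, keeping all estimates uniform in $n$. Finally I would check that the finite set $\cal E$ indeed depends only on $c$ (and the choice of $\lambda^+$), not on $n$, which is immediate since it is the set of leaves of $\wt\lambda^+$ crossed by the axis of $\wt\gamma$.
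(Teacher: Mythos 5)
Your overall strategy — build the Fock--Goncharov totally positive representative of $\rho_n(\gamma)$ as an alternating product of edge matrices and triangle matrices along the axis, and extract the asymptotics of $\ell_H$ from the entries — is indeed the route the paper takes, via \S\ref{ssec:finipoly2} and Proposition \ref{prop:multi}. But the step where you pass from ``largest entry'' to ``largest eigenvalue'' is a genuine gap, and you have correctly put your finger on it yourself: the Perron--Frobenius comparison constant is not uniform as the representations degenerate, and your proposed replacement (tropicalizing the matrix product in the asymptotic cone and reading off a translation length on the building) is a sketch, not an argument, and is not what the paper does. The paper avoids the eigenvalue/entry comparison entirely. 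Lemma \ref{lem:trace} replaces $\ell_H(\rho_n,c)$ with $\log T_H(\rho_n(c))$, where $T_H(A)=\mathrm{Tr}(A)\mathrm{Tr}(A^{-1})$, and the elementary bound $\frac{\lambda_1}{\lambda_m}\le T_H(A)\le m^2\frac{\lambda_1}{\lambda_m}$ shows the two agree up to an additive error of $\log m^2$, which is killed by the factor $r_n\to 0$. Once one is working with traces rather than eigenvalues, the total positivity of $A_{n,c}=\frak D_{n,1}\frak T_{n,1}\cdots\frak D_{n,k}\frak T_{n,k}$ is exactly what makes the entry-by-entry estimate rigorous: $\mathrm{Tr}(A_{n,c})\ge (A_{n,c})_{mm}\ge \prod_i(\frak D_{n,i})_{mm}(\frak T_{n,i})_{mm}$ for the lower bound, and a crude $S^k e^{(m+1)k\veps/r_n}\prod(\frak D_{n,i})_{mm}$ for the upper bound, with no Perron--Frobenius input. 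The inverse is handled by conjugating by the signed diagonal matrix $P$ so that $PA_{n,c}^{-1}P$ is again totally positive.

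Two smaller but real inaccuracies. First, you write that the lower bound on the dominant entry is $\prod_i e^{\sigma^1_n(e_i)}$ (and $\prod_i e^{-\sigma^{m-1}_n(e_i)}$ for $\gamma^{-1}$), and then say you would ``iterate over all $a$'' to recover $\sigma=\sigma^1+\cdots+\sigma^{m-1}$. That iteration is unnecessary and in fact does not quite make sense: the $(m,m)$ entry of the edge matrix $\frak D_i$ is already $D^1(e_i)\cdots D^{m-1}(e_i)=e^{\sigma(\rho_n,e_i)}$, so the \emph{entire} simple-root sum appears at once; no separate bookkeeping of eigenvalue gaps is needed once one works with $\mathrm{Tr}\cdot\mathrm{Tr}^{-1}$. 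Second, the finite set $\cal E$ is not simply ``the leaves crossed by the axis'': in \S\ref{ssec:finipoly2} it is the leaves $(e_0,\dots,e_{p+1})$ chosen so as to avoid the lifts $\wt k$ of the transverse arcs, with the endpoints translated by $\gamma^{\pm 1}$; this matters in case (\ref{cond:d3}) because the triangles of $\cal E$ at a closed leaf are exactly the ones governed by Theorem \ref{thm:d3}, which supplies the bound $\lim_n r_n\log T=0$ that Theorem \ref{thm:FLP} does not cover. Your proposal mentions Theorem \ref{thm:d3} but does not say where it is used; it enters precisely in bounding the entries of the triangle matrices $\frak T_{n,i}$ that come from triangles of $\cal E$ with a vertex at a fixed point of a lift of a closed leaf.
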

\begin{remark}
The finite set of leaves $\cal E$ in the statement of Theorem \ref{thm:action} depends on several choices that we describe in detail in \S \ref{ssec:finipoly2}. 
\end{remark}
\begin{example} Consider the sequence of Hitchin representations described in Example \ref{ex:genus2}. Observe that we can prescribe the shear parameters of the closed leaves of $\kappa_c=\lambda^+_c$ so that $\rho_n$ satisfies condition \ref{cond:d3} in Theorem \ref{thm:action}. 
\end{example}
The rest of this section is dedicated to the proof of Theorem \ref{thm:action}. In \S\S \ref{ssec:finitepoly}-\ref{ssec:finipoly2}, following \cite{FG06}, we explicitly build a \emph{totally positive} representative for the conjugacy class $\rho_n(c)$. The main technical result needed for our proof of Theorem \ref{thm:action}.\ref{cond:d3} is proved in \S\ref{ssec:dim3}. Finally, in \S \ref{ssec:proofaction} we establish Theorem \ref{thm:actionsuper}, which implies Theorem \ref{thm:action}.

\subsection{Total positivity and disconnecting triangulations}\label{ssec:finitepoly}
Let $\lambda$ be a triangulation of the polygon $\Pi$ inscribed in $S^1$ with cyclically ordered vertices $(x_1,\dots, x_k)$. Let $(F_1,\dots, F_k)\in \sf{F}^{(k+)}_m$ be a corresponding positive $k$-tuple of flags. Consider two triangles $t,u\in\cal T_{\lambda}$ corresponding to the positive triples of flags $(F_1,F_2,F_3)$ and $(G_1,G_2,G_3)$.
\begin{prop}[\S 9 \cite{FG06}]\label{prop:gtilde} With the notations above, there exists a unique $ g\in\rm{PGL}_m(\bb R)$ such that $g(F_i)= G_i$ if and only if
\[
T^{abc}(F_1,F_2,F_3)=T^{abc}(G_1,G_2,G_3),\ \text{for all }a,b,c\in\bb Z_{> 0},\ a+b+c=m.
\]
\end{prop}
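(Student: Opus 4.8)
The plan is to prove both implications, with the forward direction being essentially formal and the reverse direction being the substantive content. For the ``only if'' direction: if $g \in \mathrm{PGL}_m(\mathbb{R})$ satisfies $g(F_i) = G_i$, then since triple ratios are constant on $\mathrm{PGL}_m(\mathbb{R})$-orbits of triples of flags (this was observed right after the definition of triple and double ratios), we get $T^{abc}(F_1,F_2,F_3) = T^{abc}(g F_1, g F_2, g F_3) = T^{abc}(G_1,G_2,G_3)$ immediately. Uniqueness of $g$ in this direction also needs a word: the stabilizer in $\mathrm{PGL}_m(\mathbb{R})$ of a generic (here, positive) triple of flags is trivial, which is the standard fact that a generic triple of flags determines a frame up to scaling — concretely, $F_1$ gives a full flag, $F_3$ gives a transverse full flag hence a decomposition into lines, and $F_2$ rigidifies the remaining torus ambiguity, leaving only the center $\mathbb{R}^*$, which is killed in $\mathrm{PGL}_m$.

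For the ``if'' direction, the natural approach is to normalize. First I would fix, for the triple $(F_1,F_2,F_3)$, a canonical basis $e_1,\dots,e_m$ of $\mathbb{R}^m$ adapted to it: use genericity to write $F_1^{(a)} = \langle e_1,\dots,e_a\rangle$ and $F_3^{(a)} = \langle e_{m-a+1},\dots, e_m\rangle$, which pins down the $e_i$ up to individual scalars; then use the position of $F_2$ relative to $F_1$ and $F_3$ to normalize all but one of these scalars, and project to $\mathrm{PGL}_m$ to kill the last one. Do the same for $(G_1,G_2,G_3)$, obtaining a basis $e_1',\dots,e_m'$. Let $g$ be the element of $\mathrm{PGL}_m(\mathbb{R})$ sending $e_i \mapsto e_i'$; by construction $g(F_1) = G_1$ and $g(F_3) = G_3$. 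The content is then to show $g(F_2) = G_2$ precisely when all $m-1$-many triple ratios agree. The flag $F_2$ lives in the $\mathrm{PGL}$-quotient's residual moduli after fixing a transverse pair $(F_1,F_3)$, which is a torus of dimension $m-1$; the triple ratios $T^{abc}$, as $(a,b,c)$ ranges over $a+b+c=m$ with $a,b,c>0$, number exactly $\binom{m-1}{2}$, so I expect one must combine them with the already-imposed normalization constraints. The cleanest route is to check that the triple ratios, together with the explicit normalization of $F_2$ against $F_1$ and $F_3$, furnish coordinates on the space of generic triples modulo $\mathrm{PGL}_m(\mathbb{R})$ that are in bijective correspondence with the data, so equality of all triple ratios forces $g(F_2)$ and $G_2$ to have the same coordinates, hence to coincide. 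This is exactly the injectivity half of \cite[Theorem 9.1]{FG06} (our Theorem \ref{thm:posflags}) restricted to $k=3$ — indeed the $k=3$ case of $\phi_\lambda$ records precisely the triple ratios and no double ratios — so I would either invoke that theorem directly or reprove the $k=3$ case by the explicit basis computation sketched above.

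The main obstacle is the bookkeeping in the explicit normalization: choosing the adapted basis so that the residual scalars are completely fixed and then expressing the wedge-product formulas for $T^{abc}$ in these coordinates to see that they separate the remaining $\binom{m-1}{2}$ degrees of freedom of $F_2$. One has to be careful that genericity is used at every step (to guarantee the relevant wedges are nonzero and the subspace sums are direct), and that the count of free parameters matches the count of triple ratios after accounting for the normalizations already spent on $F_1, F_2, F_3$. Since the statement is attributed to \cite[\S 9]{FG06}, I anticipate the paper will simply cite it; if a self-contained argument is wanted, the basis computation is routine but tedious, and I would present it only for small $m$ or in schematic form, referring to Fock--Goncharov for the general inductive normalization.
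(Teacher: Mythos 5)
The paper itself gives no proof of Proposition \ref{prop:gtilde}: it is cited verbatim from Fock--Goncharov, so there is no in-house argument to compare against. Your reconstruction is correct and follows the standard route. The \emph{only if} direction from $\mathrm{PGL}_m$-invariance of triple ratios is immediate, and the uniqueness argument via triviality of the stabilizer of a generic triple is the right one: a transverse pair $(F_1, F_3)$ gives a line decomposition, and $F_2^{(1)}$ (meeting every coordinate line nontrivially by genericity) pins down the basis up to overall scalar, which dies in $\mathrm{PGL}_m$. For the \emph{if} direction, normalizing to the standard triple and checking that the $\binom{m-1}{2}$ triple ratios coordinatize the $\binom{m-1}{2}$-dimensional residual moduli $\mathsf F^{(3)}_m$ is exactly the $k=3$ case of Fock--Goncharov's parametrization.

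Two small remarks. First, your sentence ``so I expect one must combine them with the already-imposed normalization constraints'' suggests a shortfall that isn't there: the count is already exact, since $\dim \mathsf F^{(3)}_m = 3\binom{m}{2} - (m^2-1) = \binom{m-1}{2}$ matches the number of triple ratios, and the normalization is a choice of representative, not an additional constraint to be fed in. Second, be slightly careful pointing to Theorem \ref{thm:posflags} as the source of injectivity: as restated in this paper it only asserts that the positivity locus is triangulation-independent, whereas the injectivity of $\phi_\lambda$ you need is a further (and in the original Fock--Goncharov Theorem 9.1, adjacent) statement. Citing \cite[\S 9]{FG06} directly, as the proposition does, is cleaner.
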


Our goal is to use the triangulation $\lambda$ to describe a \emph{totally positive} representative $A_{g}\in\rm{GL}_m(\bb R)$ of $g$, i.e. such that the image of $A_{g}$ under the projection $\rm{GL}_m(\bb R)\to\rm{PGL}_m(\bb R)$ is $g$, and
\begin{itemize}
	\item[-] if $F_1\neq G_1$ and $F_3\neq G_3$, then all the minors of $A_{g}$ are positive;
	\item[-] otherwise, $A_{g}$ is a triangular matrix with positive minors, except the ones that are necessarily zero due to the shape of the matrix.
\end{itemize}
The matrix $A_{g}$ is described in terms of the Fock-Goncharov parameters $\phi_\lambda(F_1,\dots, F_k)$.

Up to considering a smaller polygon, we can assume that every vertex of the dual graph of $\lambda$ has valence two. In this case we say that $\lambda$ is a \emph{disconnecting} triangulation of $\Pi$. For example, the triangulation depicted in Figure \ref{fig:flip} is disconnecting. With this assumption, we can order the diagonals $\lambda=(e_1,\dots,e_{k-3})$, and the triangles $\cal T_\lambda=(t_0, \dots ,t_{k-3})$ in such a way that $t_{i-1}$ and $t_i$ share the edge $e_i$ for $i=1,\dots, k-3$. Observe that, for every $i=1,\dots k-4$, we can specify a preferred vertex $x_{t_i}$ of the triangle $t_i$ as the common endpoint of $e_i$ and $e_{i+1}$.  Let us set $x_{t_0}=x_1$, and $x_{t_{k-3}}$ equal to the vertex corresponding to the flag $G_1$.

Note that, by genericity of the flags, there exists a basis $(\vec u_1,\dots,\vec u_m)$ of $\bb R^m$ such that for all $a=1,\dots, m-1$
\begin{gather*}
F_1^{(a)}=\rm {Span}(\vec u_1,\dots, \vec u_a),\ \  F_3^{(a)}=\rm {Span}(\vec u_m,\dots, \vec u_{m-a+1}),\\
F_2^{(1)}=\rm {Span}(\vec u_1-\vec u_2+\dots + (-1)^{m-1}\vec u_m).
\end{gather*} 

In what follows, we will write matrices with respect to the basis $(\vec u_1,\dots,\vec u_m)$ using the coordinates described in \S \ref{ssec:coords}. Recall that every oriented diagonal $e_i\in\lambda$ corresponds to $m-1$ double ratios that we denote by $D^1(e_i),\dots, D^{m-1}(e_i)$. The \emph{edge matrix} $\frak D_i$ of $e_i$ is the diagonal matrix
\[
\frak D_i=\begin{cases}
\rm{diag}(1,D^{m-1}(e_i),\dots, D^{1}(e_i)\cdots D^{m-1}(e_i))& \text{ if $e_1$ is to the left of $e_i\in\lambda$}\\
\rm{diag}(1,D^{1}(e_i),\dots, D^{1}(e_i)\cdots D^{m-1}(e_i)) & \text{ otherwise} 
\end{cases}
\]
with the convention that $e_1$ lies to the left of itself.

Fix a triangle $t_i\in\cal T_\lambda$. We wish to define a triangular matrix, denoted by $\frak T_i$, which depends exclusively on the triple ratios $T^{abc}(t_i)$ computed with respect to the preferred vertex $x_{t_i}$ of the triangle $t_{i}$. We describe $\frak T_i$ as a product of diagonal and unipotent matrices.

Consider the diagonal matrix $\frak H^u_a(y)=\rm {diag}(y \cdot \rm {Id}_a, \rm {Id}_{m-a})$, where $\rm {Id}_b$ is the $b\times b$ identity matrix. Let $\frak U_a=I_m+E_{a,a+1}$, where $E_{a,a+1}$ is the matrix with 1 at the $(a,a+1)$-st entry and zeros elsewhere.

For $c=1,\dots, m-1$, consider the matrices
\[
\frak S^u_c(t_i)=\frak U_1\prod_{a=1}^{m-c-1}\frak H^u_{a+1}\left(\frac{1}{T^{abc}(t_i)}\right)
\frak U_{a+1},
\]
where $a+b+c=m$, $a,b\in\bb Z_{>0}$.

Set $\frak L_a$ to be the transpose of $\frak U_a$, and $\frak H^l_a(y)=\rm {diag}(\rm {Id}_a, y\cdot \rm {Id}_{m-a})$. For $c=1,\dots, m-1$ consider
\[
\frak S^{l}_c(t_i)=\frak L_{m-1}\prod_{b=1}^{m-c-1}\frak H^l_{m-(b+1)}(T^{abc}(t_i))\frak L_{m-(b+1)}
\]
where $a+b+c=m$, $a,b\in\bb Z_{>0}$. Set 
\[
\frak T_i=
\begin{cases}
\frak S^u_1(t_i)\dots \frak S^u_{m-1}(t_i)& \text{ if } (x_1,x_{t_i},y_1)\text{  appear in this cyclic order}\\
\frak S^l_{1}(t_i)\dots \frak S^l_{m-1}(t_i)&\text{ otherwise}
\end{cases}
\]
with the convention that $\frak T_{k-3}=\frak S^u_1(t_{k-3})\dots \frak S^u_{m-1}(t_{k-3})$.
\begin{prop}[Propositon 9.2 \cite{FG06}]\label{prop:multi} Let $(F_1,\dots, F_k)\in\sf{F}^{(k+)}_m$ be a positive $k$-tuple of flags. Consider $G_1,G_2,G_3\in\{F_1,\dots, F_k\}$ and assume  
\[
T^{abc}(F_1,F_2,F_3)=T^{abc}(G_1,G_2,G_3),\ \text{for all }a,b,c\in\bb Z_{> 0},\text{with }a+b+c=m.
\]
For $\lambda$, $\frak D_i$ and $\frak T_i$ defined above, the totally positive matrix
\[
A_{g}=\frak D_1\frak T_1\dots \frak D_{k-3}\frak T_{k-3}
\]
is a representative in $\rm{GL}_m(\bb R)$ of $g\in\rm{PGL}_m(\bb R)$ with $g(F_i)=G_i$.
\end{prop}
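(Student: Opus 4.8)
The plan is to establish three things: (i) existence and uniqueness of $g$; (ii) that $A_g$ is a lift of $g$; (iii) that $A_g$ has the asserted minor positivity. Part (i) is immediate from Proposition~\ref{prop:gtilde}: the hypothesis is exactly the equality of all triple ratios of $(F_1,F_2,F_3)$ and $(G_1,G_2,G_3)$, so there is a unique $g\in\rm{PGL}_m(\bb R)$ with $g(F_i)=G_i$ for $i=1,2,3$. We record at once, for use in (iii), that $D^a(e_i)>0$ for every diagonal $e_i\in\lambda$ and $T^{abc}(t_i)>0$ for every triangle $t_i\in\cal T_\lambda$ and every choice of preferred vertex, since $(F_1,\dots,F_k)\in\sf F^{(k+)}_m$ (Theorem~\ref{thm:posflags} together with the symmetries \eqref{eqn:symmetries}).

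\textbf{Part (ii): $A_g$ lifts $g$.} I would argue by induction along the dual path $t_0\to t_1\to\dots\to t_{k-3}$ of the disconnecting triangulation, reading $A_g=\frak D_1\frak T_1\cdots\frak D_{k-3}\frak T_{k-3}$ as a composition of ``frame transports''. With the basis $(\vec u_1,\dots,\vec u_m)$ adapted to $(F_1,F_2,F_3)$ as in \S\ref{ssec:finitepoly}, attach to each triangle $t_i$, equipped with its preferred vertex $x_{t_i}$ and the induced cyclic labeling, the normalized frame $\Phi_{t_i}$ standing to the triple of flags at the vertices of $t_i$ exactly as $(\vec u_j)$ stands to $(F_1,F_2,F_3)$; thus $\Phi_{t_0}$ is the standard frame. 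The core is the single-edge identity: for the ideal quadrilateral around $e_i$ (with the two triangles $t_{i-1},t_i$), the edge matrix $\frak D_i$ carries $\Phi_{t_{i-1}}$ to the edge frame of $e_i$ determined by the double ratios $D^a(e_i)$, and $\frak T_i$ carries that edge frame to $\Phi_{t_i}$ — a direct computation unwinding the definitions of $T^{abc}$ and $D^a$ and the shapes of $\frak U_a,\frak L_a,\frak H^u_a,\frak H^l_a$, with the two formulas for $\frak T_i$ accounting for the two possible senses of the turn at $t_i$. Composing along the path gives $A_g\,\Phi_{t_0}=\Phi_{t_{k-3}}$ up to an overall scalar; since $\Phi_{t_0}$ frames $(F_1,F_2,F_3)$ and $\Phi_{t_{k-3}}$ frames $(G_1,G_2,G_3)$ (recall $x_{t_{k-3}}$ carries $G_1$), the image of $A_g$ in $\rm{PGL}_m(\bb R)$ sends $F_i\mapsto G_i$, hence equals $g$.

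\textbf{Part (iii): total positivity.} Each $\frak D_i$ is a diagonal matrix with strictly positive entries (products of the $D^a(e_i)$), and each $\frak T_i$ is a product of elementary Jacobi matrices $\frak U_a$ (resp.\ $\frak L_a$) and positive diagonal matrices $\frak H^u_a(y)$ (resp.\ $\frak H^l_a(y)$), the parameters $y$ being $T^{abc}(t_i)$ or its reciprocal. By the classical criterion for total positivity of such products (Loewner--Whitney; see also the work of Lusztig and of Fomin--Zelevinsky), $A_g$ is totally nonnegative, and it is totally positive as soon as the subword of its upper elementary factors contains a reduced word for the longest permutation $w_0\in S_m$ and likewise for its lower ones. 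Inspection of $\frak S^u_c$ shows that a single ``upper'' triangle contributes, through $\frak S^u_1(t_i)\cdots\frak S^u_{m-1}(t_i)$, exactly the staircase reduced word $(s_1\cdots s_{m-1})(s_1\cdots s_{m-2})\cdots(s_1)$ of $w_0$ in the $\frak U_a$, and symmetrically a ``lower'' triangle contributes the mirror reduced word in the $\frak L_a$; the interposed $\frak D_i$ are positive diagonals, irrelevant to this count. If $F_1\neq G_1$ and $F_3\neq G_3$ the path cannot stay on one side, so both an upper and a lower triangle occur and $A_g$ is totally positive; if $F_1=G_1$ (equivalently $g$ fixes the flag $F_1$) or $F_3=G_3$, all triangle factors are of a single type, $A_g$ is the corresponding triangular matrix, and the same count yields positivity of every minor not forced to vanish.

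\textbf{Main obstacle.} The crux is the single-edge frame-transport identity, together with the bookkeeping of preferred vertices and orientations it entails: verifying that $\frak D_i\frak T_i$ realizes precisely the change of adapted frame prescribed by the double ratios of $e_i$ and the triple ratios of $t_i$ requires a careful, sign- and order-sensitive unwinding of the definitions of $T^{abc}$ and $D^a$. By contrast, the total-positivity step is essentially the combinatorics of reduced words once the elementary factorization of each $\frak T_i$ is in hand.
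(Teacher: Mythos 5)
The paper does not prove this statement; it is imported verbatim as Proposition 9.2 of Fock--Goncharov's paper, so there is no ``paper's own proof'' to measure you against. Evaluated against what an actual proof requires, your outline has the right shape --- (i) via Proposition~\ref{prop:gtilde}, (ii) by a frame-transport / change-of-adapted-basis argument along the dual path, (iii) by Loewner--Whitney together with reduced-word counting --- and this is indeed the route Fock and Goncharov take in their \S 9. Your reduced-word bookkeeping for a single triangle is correct: $\frak S^u_1(t_i)\cdots\frak S^u_{m-1}(t_i)$ does produce the staircase word $(s_1\cdots s_{m-1})(s_1\cdots s_{m-2})\cdots(s_1)$ of $w_0$ in the $\frak U_a$, interleaved with positive diagonals, and symmetrically for the lower side.

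That said, two points keep this from being a proof rather than a correct table of contents. First, part (ii) \emph{is} the proposition: the single-edge identity that $\frak D_i$ carries $\Phi_{t_{i-1}}$ to the edge frame of $e_i$ and $\frak T_i$ carries the edge frame to $\Phi_{t_i}$ is exactly what one must check, and you defer it entirely to ``a direct computation unwinding the definitions.'' This is not a supporting lemma one can wave at; it is the whole content, and it requires careful tracking of which vertex is preferred, which way each diagonal is oriented, and which of the two formulas for $\frak T_i$ applies. Second, in part (iii) the sentence ``If $F_1\neq G_1$ and $F_3\neq G_3$ the path cannot stay on one side'' is asserted without argument, and it is precisely the hinge between the non-degeneracy hypothesis and total positivity. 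One must show that the dichotomy in the definition of $\frak T_i$ (upper iff $(x_1,x_{t_i},y_1)$ is cyclically ordered) produces both an upper and a lower block exactly when $g$ fixes neither $F_1$ nor $F_3$, and one must also say what happens in the degenerate cases $x_{t_i}\in\{x_1,y_1\}$ (which do occur, e.g.\ for fan-type disconnecting triangulations) where the cyclic-order test is vacuous. Without pinning this down --- and being careful that the convention really assigns ``upper'' and ``lower'' so that, when $g$ fixes $F_1$ (resp.\ $F_3$), the resulting matrix is upper (resp.\ lower) triangular in the adapted basis --- the total-positivity conclusion does not yet follow from the reduced-word count.
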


\subsection{Finite polygons associated to a fundamental group element}\label{ssec:finipoly2}

Consider a maximal geodesic lamination $\lambda$. Let $\gamma$ be a non-trivial group element in $\pi_1(S)$ with axis $\rm{ax}(\gamma)$ in $\wt S$. Choose a point $p\in \rm{ax}(\gamma)$ in the interior of a triangle $t_0\in\wt {\cal T}_{\lambda}$. 

In this section, we outline how to define a polygon $\Pi_{\gamma,p}$ with finitely many vertices inscribed in $\bdry\wt S$, and equipped with a disconnecting triangulation $\cal E=\cal E_{\gamma,p}$ in the sense of \S\ref{ssec:finitepoly}.

Label the vertices of $t_0$ by $x_1,x_2,x_3$ so that the leaf $\wt e\in\wt \lambda$ connecting $x_1$ and $x_3$ is the closest, amongst the edges of $t_0$, to the $\gamma$-translate $\gamma(t_0)$. Symmetrically, denote by $y_1$, $y_2$, $y_3$ the vertices of the edge of $\gamma(t_0)$ so that $\gamma(x_i)=y_i$, $i=1,2,3$. 

Observe that there exist finitely many lifts of closed leaves $\eta\in\wt\lambda_c$ such that $t_0$ and $\gamma(t_0)$ belong to different connected components of $\wt S-\eta$. For each such $\eta$, choose a lift $\wt k$ of an arc $\cal K$ such that $\wt k\cap \eta\neq \emptyset$. There exist finitely many lifts $e$ of open leaves such that 
\begin{enumerate}
	\item the leaf $e$ disconnects $t_0$ from $\gamma(t_0)$,
	\item the leaf $e$ does not intersect any of the chosen lifts $\wt k$.
\end{enumerate}
We denote this collection of leaves by $(e_1,\dots, e_p)$ labeled by proximity to the triangle $t_0$. By convention, we assume that $e_1$ is the leaf with endpoints $x_1,x_3$ and $e_p$ is the edge of $\gamma(t_0)$ closest to $e_1$. Let us set $e_0=\gamma^{-1}(e_p)$ and $e_{p+1}=\gamma(e_1)$, and 
\[
\cal E=\cal E_{\gamma,p}=(e_0,\dots, e_{p+1}).
\]
The vertices of the leaves in $\cal E$ determine a \emph{finite} cyclically ordered list of points $(x_1,\dots x_k)$ in $\bdry \wt S$ which defines a polygon $\Pi_{\gamma,p}$. Note that $\cal E$ is a disconnecting triangulation for $\Pi_{\gamma,p}$ in the sense of \S \ref{ssec:finitepoly}. Observe that the set $\cal T_{\cal E}$ of triangles defined by $\cal E$ is not a subset $\cal T_{\cal \lambda}$ as soon as one of the diagonals of $\cal E$ is the lift of a closed leaf of $\wt \lambda$.

\subsection{A technical result for $m=3$}\label{ssec:dim3}

Let $\wt c$ be a lift of an element of $\lambda^+_c$. A choice of a lift $\wt k$ of an arc in $\cal K$ corresponding to $\wt c$ defines three vertices $x_{c^+},x_{c^-},x_{c^r}$ in $\bdry \wt S$ as in \S \ref{ssec:coords}.

In this section, we prove the assumptions of Theorem \ref{thm:action}.\ref{cond:d3} give us control over the unique triple ratio of the positive triple of flags $(\xi_{\rho_n}(x_{c^+}), \xi_{\rho_n}(x_{c^-}), \xi_{\rho_n}(x_{c^r}))$. More precisely, we prove the following.

\begin{thm}\label{thm:d3} Let $(\rho_n)$ be a sequence of Hitchin representations of tree-type in $\sf{Hit}_3(S)$. Let $c$ be a closed leaf for the maximal geodesic lamination $\lambda^+$ and suppose there exists $C>0$ such that 
\[
\log\frac{\lambda_a}{\lambda_{a+1}}(\rho_n,c)>C
\]
for all $n>1$ and for $a=1,2$. Consider the points $x_{c^+},x_{c^-},x_{c^r}\in\bdry \wt S$ corresponding to a lift $\wt c$ of $c$ to the universal cover $\wt S$ and a corresponding lift $\wt k$ of the transverse arc $k$. 
Then,
\begin{align}\label{eqn:tripleratiod3}
\lim_{n\to\infty} r_n\log(T(\xi_{\rho_n}(x_{c^+}),\xi_{\rho_n}(x_{c^-}),\xi_{\rho_n}(x_{c^r}))=0
\end{align}
\end{thm}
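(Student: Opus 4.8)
The plan is to reduce the computation of
$T_n:=T^{111}\bigl(\xi_{\rho_n}(x_{c^+}),\xi_{\rho_n}(x_{c^-}),\xi_{\rho_n}(x_{c^r})\bigr)$
to the Bonahon--Dreyer coordinates of $\lambda^+$ near the closed leaf $c$, and then to rerun, near $c$, the asymptotic collapse carried out in the proof of Theorem~\ref{thm:pmflips} near an open leaf.

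\textbf{Step 1: localization near $\wt c$.} Fix a representative $\gamma\in\pi_1(S)$ of $c$ with $\mathrm{ax}(\gamma)=\wt c$ and set $g_n:=\rho_n(\gamma)\in\mathrm{PGL}_3(\bb R)$. Since $x_{c^+},x_{c^-}$ are the attracting and repelling fixed points of $\wt c$, the flags $\xi_{\rho_n}(x_{c^+})$ and $\xi_{\rho_n}(x_{c^-})$ are the two extreme eigenflags of $g_n$, and $x_{c^r}$ is a vertex of an ideal triangle $\wt t'\in\wt{\cal T}_{\lambda^+}$ in a pair of pants $Q\in\cal P$ with $c\subset\partial Q$. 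Because triple ratios are $\mathrm{PGL}_3(\bb R)$-invariant and $g_n$ fixes the first two flags, $T_n$ is unchanged if $\xi_{\rho_n}(x_{c^r})$ is replaced by $\xi_{\rho_n}(\gamma^{\pm N}x_{c^r})$, i.e.\ by the flag at any vertex of the $\gamma$-fan of triangles of $\lambda^+$ spiralling around $\wt c$ inside $Q$. I then connect $(\xi_{\rho_n}(x_{c^+}),\xi_{\rho_n}(x_{c^-}),\xi_{\rho_n}(x_{c^r}))$ to the coordinate data of $\lambda^+$ on $Q$ using the finite-polygon/disconnecting-triangulation machinery of \S\S\ref{ssec:finitepoly}--\ref{ssec:finipoly2} (with $\gamma$ as above) together with a summation over this fan; the monodromy of the fan is encoded by $g_n$, hence by the quantities $\log\frac{\lambda_a}{\lambda_{a+1}}(\rho_n,c)$.

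\textbf{Step 2: a subtraction-free formula for $T_n$.} Running the diagonal-flip mutation rules of \S\ref{ssec:flips} along this data --- in $m=3$ these are the explicit rational maps of Example~\ref{exe:postriang3D}, combined with Remark~\ref{rmk:generalflip} to account for the edge $\wt c$ --- expresses $T_n$ as a quotient of two subtraction-free polynomials in the variables $e^{\tau^{abc}(\rho_n,t)}$ (for finitely many triangles $t$ of $\cal T_{\lambda^+}$ in $Q$), $e^{\sigma^a(\rho_n,e)}$ (for finitely many open leaves $e$ of $\lambda^+$ in $Q$), $e^{\pm\sigma^a(\rho_n,c)}$, and in the ``geometric-series'' factors $\bigl(1-\tfrac{\lambda_{a+1}}{\lambda_a}(\rho_n,c)\bigr)^{-1}$, $a=1,2$, coming from resumming the contribution of the fan. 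Here the hypothesis $\log\frac{\lambda_a}{\lambda_{a+1}}(\rho_n,c)>C$ is used twice: to ensure that the sum over the fan converges geometrically at a rate uniform in $n$ (so this resummed expression is legitimate), and to ensure that the factors $1-\tfrac{\lambda_{a+1}}{\lambda_a}(\rho_n,c)\in(1-e^{-C},1)$ stay bounded away from $0$ and $1$. I also use the Bonahon--Dreyer length relation of Proposition~\ref{prop:lengthequ} for the boundary curve $c$ of $Q$ to rewrite the remaining $\log\frac{\lambda_a}{\lambda_{a+1}}(\rho_n,c)$-factors in terms of shears, so that their occurrences in the numerator and the denominator of the expression for $T_n$ become visibly matched.

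\textbf{Step 3: asymptotics, and the main obstacle.} Scaling by $r_n$, taking logarithms and applying Lemma~\ref{lem:limits1} and Corollary~\ref{cor:limits2} termwise --- exactly as in the proof of Theorem~\ref{thm:pmflips} --- the triangle contributions drop out since $\lim_nr_n\tau^{abc}(\rho_n,t)=0$ by Theorem~\ref{thm:FLP}; the factors $1-\tfrac{\lambda_{a+1}}{\lambda_a}(\rho_n,c)$ contribute $0$ since they are bounded away from $0$ and $1$; the open-leaf and eigenvalue-ratio contributions either cancel between numerator and denominator or pair up because $\lim_nr_n\sigma^a(\rho_n,e)\ge0$ and, by the hypothesis of Theorem~\ref{thm:action}.\ref{cond:d3}, $\lim_nr_n\sigma^a(\rho_n,c)\ge0$; and the leading terms of the numerator and of the denominator of the expression for $T_n$ both carry the scaled rate $\lim_nr_n\bigl(\sigma^1(\rho_n,c)+\sigma^2(\rho_n,c)\bigr)$. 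Hence $\lim_nr_n\log T_n=0$, which is \eqref{eqn:tripleratiod3}. I expect the main obstacle to be Steps~1--2: pinning down exactly which finite part of the coordinate data of $\lambda^+$ on $Q$ governs $T_n$, carrying out the resummation over the (a priori infinite) $\gamma$-fan around $\wt c$, and verifying that the resulting subtraction-free formula has the stated shape --- this is precisely the point at which the uniform lower bound $>C$ is indispensable and the argument is special to $m=3$, consistent with the remark in the text that no analogue is expected for $m>3$.
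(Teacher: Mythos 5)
Your Step~2, which you yourself flag as the main obstacle, is a genuine gap: you never carry out the proposed resummation over the $\gamma$-fan spiraling to $\wt c$, nor do you verify that the result is a subtraction-free rational formula of the claimed shape with geometric-series factors. Step~3 is then applied to a formula whose existence you have not established. You also invoke $\lim_n r_n\sigma^a(\rho_n,c)\geq 0$, which is a hypothesis of Theorem~\ref{thm:action}.\ref{cond:d3} but not of Theorem~\ref{thm:d3}; the statement you are asked to prove does not grant you that assumption, and the paper's proof does not use it.

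The paper's argument sidesteps the infinite fan entirely and is much more elementary. Writing $X_n$ for the triple ratio in question, one introduces the \emph{single} companion quantity
\[
Y_n := T\bigl(\rho_n(c)\,\xi_{\rho_n}(x_{c^r}),\ \xi_{\rho_n}(x_{c^-}),\ \xi_{\rho_n}(x_{c^r})\bigr),
\]
the triple ratio of the ideal triangle with vertices $\gamma\cdot x_{c^r}$, $x_{c^-}$, $x_{c^r}$. That triangle is either in $\cal T_{\lambda^+}$ or one diagonal flip away, so $\lim_n r_n\log Y_n=0$ already follows from Theorems~\ref{thm:pmflips} and~\ref{thm:FLP}. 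Choosing a basis of $\bb R^3$ that diagonalizes $\rho_n(\gamma)$ and is adapted to the three flags, one computes an explicit two-variable rational relation between $X_n$ and $Y_n$ whose only other ingredients are $\mu_{a,n}=\frac{\lambda_a}{\lambda_{a+1}}(\rho_n,c)$; the hypothesis $\log\mu_{a,n}>C$ bounds the factors $1-1/\mu_{a,n}$ away from $0$ and $1$, giving sandwich estimates for $X_n$ in terms of $Y_n$, and the conclusion follows from Equation~\ref{eqn:useful}. No cluster mutations, no resummation, and no extra hypotheses are needed. Your instinct to compare $X_n$ with a controlled triple ratio in the fan and to use the uniform eigenvalue gap to tame the $\mu_{a,n}$-factors is in the right spirit; the missing idea is that a single $\gamma$-translate and a direct eigenbasis computation already suffice.
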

\begin{remark} The assumption $m=3$ in Theorem \ref{thm:d3} is due to an explicit computation required for our proof.
\end{remark}

\begin{proof} Denote by $\gamma\in\pi_1(S)$ the group element corresponding to $\wt c$. Let us ease notation by setting
\begin{gather*}
X_n=T(\xi_{\rho_n}(x_{c^+}),\xi_{\rho_n}(x_{c^-}),\xi_{\rho_n}(x_{c^r})),\ Y_n=T(\rho_n(c)\xi_{\rho_n}(x_{c^r}),\xi_{\rho_n}(x_{c^-}),\xi_{\rho_n}(x_{c^r})),\\
\mu_{a,n}=\frac{\lambda_a}{\lambda_{a+1}}(\rho_n,c)
\end{gather*}
Note that $\lim_nr_n\log Y_n=0$ by Theorem \ref{thm:FLP}. In fact, the points $\gamma.x_{c^r},x_{c^-},x_{c^r}$ determine an ideal triangle $t$ which is either in $\cal T_{\lambda^+}$ or it is a triangle in an ideal triangulation which differs from $\lambda^+$ by a diagonal flip. For $\veps>0$, let $N>0$ be such that for all $n>N$
\[
\exp\left(-\frac{\veps}{r_n}\right)<Y_n<\exp\left(\frac{\veps}{r_n}\right).
\]
We express $X_n$ in terms of $Y_n$. For every $n$, let $(u_{1,n},u_{2,n},u_{3,n})$ be the basis of $\bb R^3$ such that
\begin{align*}
\xi_{\rho_n}(x_{c^+})&= \rm{Span}(u_{1,n})\subset \rm{Span}(u_{1,n},u_{2,n}),\\
\xi_{\rho_n}(x_{c^+})&= \rm{Span}(u_{3,n})\subset \rm{Span}(u_{3,n},u_{2,n}),\\
\xi_{\rho_n}(x_{c^r})^{(1)}&=\rm{Span}(u_{1,n}+u_{2,n}+u_{3,n}).
\end{align*}
With respect to this basis
\begin{align*}
\xi_{\rho_n}(x_{c^r})^{(2)}&=\xi_{\rho_n}(x_{c_r})^{(1)}\oplus \rm{Span}(u_{2,n}+\left(1+X_n\right)u_{3,n}),\\
\rho(c)\xi_{\rho_n}(x_{c^r})^{(1)}&=\rm{Span}\left(u_{1,n}+\frac{1}{\mu_{1,n}}u_{2,n}+\frac{1}{\mu_{1,n}\mu_{2,n}}u_{3,n}\right),\\
\rho(c)\xi_{\rho_n}(x_{c^r})^{(2)}&=\rho_n(c)\xi_{\rho_n}(x_{c_r})^{(1)}\oplus\rm{Span}\left(u_{2,n}+\frac{1}{\mu_{2,n}}\left(1+X_n\right)u_{3,n}\right).
\end{align*}
Thus, we can compute explicitly
\[
Y_n=\frac{\left(1-\mu_{2,n}\right)-\left(\mu_{1,n}-1\right)\mu_{2,n}X_n}{\left(1-\mu_{1,n}\right)X_n-\mu_{1,n}(\mu_{2,n}-1)}, \text{ and }
X_n=\frac{\left(1-\frac{1}{\mu_{2,n}}\right)\left(Y_n+\frac{1}{\mu_{1,n}}\right)}{\left(1-\frac{1}{\mu_{1,n}}\right)\left(1+\frac{Y_n}{\mu_{2,n}}\right)}
\]
Note that, by hypotheses, there exists $D>0$ such that the following inequalities hold
\[
0<\frac{1}{\mu_{a,n}}<1,\ \ \ \ \ \ \ \ \ \frac{D}{1+D}<1-\frac{1}{\mu_{a,n}}<1.
\]
Thus
\begin{equation}\label{eqn:endof3}
\begin{aligned}
X_n&>\frac{\left(\frac{D}{1+D}\right)Y_n}{1+Y_n}>\left(\frac{D}{1+D}\right)\frac{\exp\left(-\frac{\veps}{r_n}\right)}{1+\exp\left(\frac{\veps}{r_n}\right)}\\
X_n&<\frac{1+Y_n}{\frac{D}{1+D}}<\left(1+\frac{1}{D}\right)\left(1+\exp\left(\frac{\veps}{r_n}\right)\right)
\end{aligned}
\end{equation}
Recall that we assume $\lim_nr_n=0$. Combining the Inequalities \ref{eqn:endof3} with Equation \ref{eqn:useful} for $C=\exp\left(\veps\right)>1$, we get
\[
-\veps \leq \lim_nr_n\log X_n\leq \veps
\]
and this concludes the proof as $\veps$ was arbitrary.
\end{proof}

\subsection{Estimates for the Hilbert length along a tree-type sequences}\label{ssec:proofaction}

We replace the Hilbert length with an algebraic quantity better adapted to our setup. Denote by $\rm{Tr}(A)$ the trace of a matrix $A$. Observe that the function 
\begin{align*}
T_H\colon \rm {GL}_m(\bb R)&\to \bb R\\
A&\mapsto \rm {Tr}(A)\rm {Tr}(A^{-1})
\end{align*}
is invariant under scaling and induces a function $T_H\colon\rm {PSL}_m(\bb R)\to\bb R$.

\begin{lem}\label{lem:trace} Let $c$ be a non-trivial conjugacy class in $\pi_1(S)$. Let $(\rho_n)$ be a sequence of Hitchin representations. Suppose that $\lim_n\ell_H(\rho_n,c)$ diverges. Then,
\[
\lim_n \frac{\log T_H(\rho_n(c))}{\ell_H (\rho_n, c)}=1.
\]
\end{lem}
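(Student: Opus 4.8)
The idea is to work with a diagonalizable representative and compare $\log T_H(\rho_n(c))$ with $\ell_H(\rho_n,c)$ term by term. By Theorem \ref{thm:lengths}, for a Hitchin representation $\rho$ and a non-trivial $c$ the element $\rho(c)$ is diagonalizable over $\bb R$ with distinct eigenvalue moduli $\lambda_1(\rho,c)>\dots>\lambda_m(\rho,c)>0$, so after choosing a representative in $\rm{GL}_m(\bb R)$ we may assume $\rho_n(c)$ is conjugate to $\rm{diag}(\lambda_1,\dots,\lambda_m)$ (signs of eigenvalues do not matter here, since $T_H$ only involves traces of $A$ and $A^{-1}$ and these absorb the signs into the estimate; more carefully one writes the eigenvalues as $\epsilon_a\lambda_a$ with $\epsilon_a\in\{\pm1\}$ and notes $|{\rm Tr}|\le\sum\lambda_a$). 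Then
\[
T_H(\rho_n(c))=\Big(\sum_{a=1}^m \epsilon_a\lambda_a(\rho_n,c)\Big)\Big(\sum_{a=1}^m \epsilon_a\lambda_a(\rho_n,c)^{-1}\Big).
\]

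First I would bound $T_H$ above and below in terms of the extreme eigenvalues. For the upper bound, $|{\rm Tr}(\rho_n(c))|\le \sum_a \lambda_a \le m\,\lambda_1$ and $|{\rm Tr}(\rho_n(c)^{-1})|\le \sum_a \lambda_a^{-1}\le m\,\lambda_m^{-1}$, so $T_H(\rho_n(c))\le m^2\,\lambda_1/\lambda_m = m^2 e^{\ell_H(\rho_n,c)}$. For the lower bound I would use positivity of the limit map: by Theorem \ref{thm:flagmap} (or the totally positive representative from Proposition \ref{prop:multi}, following \cite{FG06}), $\rho_n(c)$ has a totally positive representative, hence one may take all $\epsilon_a=+1$, and then ${\rm Tr}(\rho_n(c))\ge\lambda_1$ and ${\rm Tr}(\rho_n(c)^{-1})\ge\lambda_m^{-1}$, giving $T_H(\rho_n(c))\ge \lambda_1/\lambda_m = e^{\ell_H(\rho_n,c)}$. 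Combining,
\[
\ell_H(\rho_n,c)\ \le\ \log T_H(\rho_n(c))\ \le\ \ell_H(\rho_n,c)+2\log m .
\]
Dividing by $\ell_H(\rho_n,c)$ and letting $n\to\infty$, the hypothesis $\ell_H(\rho_n,c)\to\infty$ forces the ratio to $1$, which is the claim.

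The only genuinely delicate point is the lower bound on the traces, i.e. ruling out catastrophic cancellation in $\sum_a \epsilon_a\lambda_a$; this is exactly where positivity of Hitchin representations is essential, and I expect this to be the main obstacle in writing the argument cleanly. It is resolved by invoking the total positivity of the representative of $\rho_n(c)$ provided by \S\ref{ssec:finitepoly}: a totally positive matrix has all eigenvalues positive, so ${\rm Tr}(\rho_n(c))=\sum_a\lambda_a\ge\lambda_1>0$ and likewise for the inverse, and no cancellation occurs. (Alternatively, one can avoid this entirely by observing $|{\rm Tr}(A){\rm Tr}(A^{-1})|$ need not be controlled below, but $T_H$ of the genuine $\rho_n(c)\in\rm{PSL}_m$ is; in any case the positivity argument is the clean route.) Everything else is elementary estimation, so the proof is short once the total-positivity input is in place.
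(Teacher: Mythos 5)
Your proof is correct and follows essentially the same route as the paper: both pass to a totally positive representative of $\rho_n(c)$ with distinct positive real eigenvalues, then sandwich $T_H$ between $\lambda_1/\lambda_m$ and $m^2\,\lambda_1/\lambda_m$ and take logarithms. The paper attributes the positive-spectrum representative to Proposition~\ref{prop:gtilde} and writes the two-sided bound as a single displayed chain of inequalities, but there is no substantive difference from your argument.
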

\begin{proof} By Proposition \ref{prop:gtilde}, there exists a representative $A_{n,c}$ of $\rho_n(c)$ to $\rm{GL}_m(\bb R)$ which is diagonalizable over $\bb R$ with positive distinct eigenvalues: $\lambda_{1,n}>\dots>\lambda_{m,n}$.
This implies that,
\[
\frac{\lambda_{1,n}}{\lambda_{m,n}}< T_H(A_{n,c})=\frac{\lambda_{1,n}}{\lambda_{m,n}}\left(1+\frac{\lambda_{2,n}}{\lambda_{1,n}}+\dots+\frac{\lambda_{m,n}}{\lambda_{1,n}}\right)\left(1+\frac{\lambda_{m,n}}{\lambda_{m-1,n}}+\dots+ \frac{\lambda_{m,n}}{\lambda_{1,n}}\right)\leq \frac{\lambda_{1,n}}{\lambda_{m,n}}\cdot m^2
\]
As $\ell_H (\rho_n, c)=\log\frac{\lambda_{1,n}}{\lambda_{m,n}}$, it follows that
\[
1 \leq \lim_n \frac{\log T_H(\rho_n(c))}{\ell_H (\rho_n,c)}\leq \lim_n \left(1+\frac{\log m^2}{\ell_H(\rho_n,c)}\right)=1.\qedhere
\]
\end{proof}

Let $c$ be a non-trivial conjugacy class in $\pi_1(S)$. Choose $\gamma\in c$, and $p$ on the axis of $\gamma$. By \S \ref{ssec:finipoly2} this data determines a finite polygon $\Pi_{\gamma,p}$ inscribed in $\bdry S$ equipped with a disconnecting triangulation $\cal E=\cal E_{\gamma,p}$. 

Let $t_0$ be the triangle $\Pi_{\gamma,p}$ containing $p$ with vertices $x_1,x_2,x_3$. Let $y_1,y_2,y_3$ be vertices of $\Pi_{\gamma,p}$ such that $\gamma(x_i)=y_i$. The limit maps $\xi_{\rho_n}$ give us sequences of flags
\[
F_{i,n}=\xi_{\rho_n}(x_i),\text{ and }G_{i,n}=\xi_{\rho_n}(y_i)=\rho_n(\gamma)\cdot \xi_{\rho_n}(x_i).
\]
In particular, Propositions \ref{prop:gtilde}-\ref{prop:multi} gives a specific sequence of totally positive matrix $A_{n,c}$ in $\rm{GL}_m(\bb R)$ which represents $\rho_n(\gamma)\in\rm{PSL}_m(\bb R)$. We wish to explicitly understand the asymptotic behavior of the sequence $\log T_H(A_{n,c})$ assuming the hypotheses of Theorem \ref{thm:action}.

\begin{thm}\label{thm:actionsuper} Let $\rho_n$ be a sequence of Hitchin representations of tree-type. Let $c$ be a non-trivial conjugacy class in $\pi_1(S)$ such that
\[
\lim_nr_n\ell_H (\rho_n, c)\in [0,\infty).
\]
and either of the following assumptions holds.
\begin{enumerate}[(a)]
	\item The curve $c$ has zero intersection number with the closed leaves in $\lambda^+_c$.
	\item The sequence $(\rho_n)$ is contained in $\sf{Hit}_3(S)$, $\lim_nr_n\sigma^a(\rho_n,d)\geq 0$ for all $a=1,2$, and $d\in\lambda^+_c$, and there exists $C>0$ such that for all $a=1,2$, and $d\in\lambda_c$
	\[
	\log\frac{\lambda_a}{\lambda_{a+1}}(\rho_n,d)>C.
	\] 
\end{enumerate} 
Denote by $\cal E=\cal E_{\gamma,p}$ the disconnecting triangulation of the polygon $\Pi_{\gamma,p}$ described in \S\ref{ssec:finipoly2}. Then, if $A_{n,c}$ is the representative of $\rho_n(c)$ given by Proposition \ref{prop:multi}
\[
\lim_nr_n\log\rm{Tr}(A^{-1}_{n,c})=0,\text{ and }\lim_n r_n\log\rm{Tr}(A_{n,c})=\lim_nr_n\sum_{e\in\cal E}\sigma(\rho_n, e).
\]
\end{thm}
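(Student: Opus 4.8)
The plan is to use the explicit totally positive representative $A_{n,c}=\frak D_1\frak T_1\cdots \frak D_{k-3}\frak T_{k-3}$ provided by Proposition \ref{prop:multi} and carefully track the asymptotics of each factor after multiplying by $r_n$ and taking logarithms. The key observation is that each edge matrix $\frak D_i$ is diagonal with entries that are products of double ratios $D^a(e_i)=e^{\sigma^a(\rho_n,e_i)}$ (when $e_i$ is an open leaf) or $e^{\sigma^a(\rho_n,c_i)}$ (when $e_i$ is a lift of a closed leaf), while each triangle matrix $\frak T_i$ is a product of unipotent matrices and diagonal matrices $\frak H$ whose diagonal entries involve only the reciprocals of triple ratios $T^{abc}(t_i)$. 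By Theorem \ref{thm:FLP} (Equation \ref{eqn:positivetriangle}), after rescaling by $r_n$ the triangle parameters of $\lambda^+$ go to zero, and by Theorem \ref{thm:d3} the same holds for the triple ratios attached to triangles in the triangulation $\cal E$ that arise from diagonal flips near a closed leaf (under the hypotheses of case (b)). Hence asymptotically the $\frak T_i$ behave like fixed unipotent matrices and contribute nothing to $r_n\log\rm{Tr}$.

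\medskip

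First I would set up the bookkeeping: write $A_{n,c}$ as a product in which the diagonal edge matrices carry all the exponential growth. I would argue that because all double ratios and the relevant triple ratios are positive (Theorem \ref{thm:poslimmap}), all entries of $A_{n,c}$ are positive (or the nonzero entries of a triangular matrix are), so $\rm{Tr}(A_{n,c})=\sum_i (A_{n,c})_{ii}$ is a sum of positive terms; the same holds for $A_{n,c}^{-1}$ after noting $A_{n,c}^{-1}$ is again a product of the same type (inverses of unipotent and diagonal factors in reverse order). This positivity is crucial: it lets me replace $\log\rm{Tr}$ of a sum of positive monomials in the parameters by the $\max$ of the logarithms of those monomials, up to an additive error bounded by $\log(\text{number of terms})$, which is a fixed constant independent of $n$ and therefore vanishes after multiplying by $r_n\to 0$. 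So $\lim_n r_n\log\rm{Tr}(A_{n,c})=\lim_n r_n\,\max_{\text{monomials }M}\log M$, and each monomial is a product of entries coming from the $\frak D_i$ and $\frak T_i$.

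\medskip

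Next I would compute the dominant monomial. Since the $\frak T_i$ contribute only factors of the form $1/T^{abc}(t_i)$ (and $1$'s from the unipotent parts), and since $\lim_n r_n\log T^{abc}(t_i)=0$ for every relevant triangle (Theorem \ref{thm:FLP} for the open leaves and interior triangles, Theorem \ref{thm:d3} for the triangles coming from the closed-leaf flips in case (b); in case (a) there are no closed-leaf flips at all, so every triangle of $\cal E$ lies in $\cal T_{\lambda^+}$ or differs from it by an open-leaf flip, which is handled by Theorem \ref{thm:pmflips}), all triangle contributions drop out in the limit. For the edge matrices, the diagonal entries are $1, D^{m-1}(e_i), D^{m-1}(e_i)D^{m-2}(e_i),\dots,\prod_a D^a(e_i)$ (or the analogous increasing order), and by the tree-type/Theorem \ref{thm:FLP} hypotheses $\lim_n r_n\sigma^a(\rho_n,e)\ge 0$ for open leaves $e\in\lambda^+_o$ and, under the running hypotheses, for the closed leaves in $\cal E$ as well. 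Therefore the dominant diagonal entry of each $\frak D_i$ is the one equal to the full product $\prod_{a=1}^{m-1}D^a(e_i)=e^{\sigma(\rho_n,e_i)}$, and the dominant monomial in $\rm{Tr}(A_{n,c})$ picks up exactly $\prod_{e\in\cal E}e^{\sigma(\rho_n,e)}$; taking $r_n\log$ gives $\lim_n r_n\sum_{e\in\cal E}\sigma(\rho_n,e)$. For $\rm{Tr}(A_{n,c}^{-1})$, the inverse edge matrices have diagonal entries that are the reciprocals, so every exponent is $\le 0$ after rescaling, and Lemma \ref{lem:limits1} (applied to the sum $\rm{Tr}(A^{-1}_{n,c})$, which is again $1$ plus positive terms each with nonpositive rescaled log) forces $\lim_n r_n\log\rm{Tr}(A^{-1}_{n,c})=0$. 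The combination with Lemma \ref{lem:trace} then yields Theorem \ref{thm:action}.

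\medskip

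The main obstacle I anticipate is controlling the off-diagonal contributions — that is, verifying that no monomial coming through the unipotent factors $\frak U_a,\frak L_a$ and the non-maximal diagonal entries can \emph{beat} the monomial $\prod_{e\in\cal E}e^{\sigma(\rho_n,e)}$ in the limit. One has to check that interleaving partial products of double ratios (the non-maximal diagonal entries of the $\frak D_i$) together with the $1/T^{abc}$ factors cannot produce a larger rescaled exponent; this requires using both the sign condition $\lim_n r_n\sigma^a\ge 0$ (so that dropping any $D^a$ factor only decreases the exponent) and the vanishing of the rescaled triple ratios (so the $1/T$ factors are negligible). A secondary subtlety is the bookkeeping of which matrix factors are genuinely present when some $F_i=G_i$ (the triangular case of Proposition \ref{prop:multi}), and confirming the diagonal-dominance argument still applies there. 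Once these monomial comparisons are organized — essentially a tropicalization of the product $\frak D_1\frak T_1\cdots$ — the conclusion follows mechanically.
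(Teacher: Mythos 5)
Your treatment of $\rm{Tr}(A_{n,c})$ is essentially the paper's argument: both proofs exploit the explicit product $A_{n,c}=\frak D_{n,1}\frak T_{n,1}\cdots\frak D_{n,k}\frak T_{n,k}$ of non-negative matrices, bound the trace below by the $(m,m)$ path and above by a sum of $O(1)$-many positive monomials (equivalently, your ``tropicalization'' observation that $\log\rm{Tr}$ equals the max rescaled monomial log up to an $n$-independent additive constant), and then use Theorems \ref{thm:FLP}, \ref{thm:pmflips}, and \ref{thm:d3} to show that the triangle factors are asymptotically negligible and the edge factors contribute exactly $\sum_{e\in\cal E}\sigma(\rho_n,e)$. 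That part is fine.

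There is, however, a genuine gap in the $\rm{Tr}(A_{n,c}^{-1})$ part. You write that ``$A_{n,c}^{-1}$ is again a product of the same type (inverses of unipotent and diagonal factors in reverse order)'' and then apply the same positivity-and-monomials argument. But the inverse unipotent factors $\frak U_a^{-1}=I-E_{a,a+1}$ and $\frak L_a^{-1}=I-E_{a+1,a}$ have \emph{negative} off-diagonal entries, so the monomial expansion of the diagonal entries of $A_{n,c}^{-1}$ has mixed signs with massive cancellations (this is precisely why $\rm{Tr}(A^{-1})$ is of a completely different order than what a naive upper bound would suggest). The inequality ``$\log$ of a positive sum $\le\max$ of monomial logs plus a constant'' is simply false once the monomials can be negative, and without a replacement you have no way to verify your claim that each rescaled diagonal entry of $A^{-1}$ has nonpositive rescaled log. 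The paper's fix is the conjugation trick: set $P=\rm{diag}(-1,1,\ldots,(-1)^m)$, observe $\rm{Tr}(A^{-1})=\rm{Tr}(PA^{-1}P)$, and note that $P\frak U_a^{-1}P=\frak U_a$ (and $P$ commutes with the diagonal factors), so that $PA_{n,c}^{-1}P=(P\frak T_{n,k}^{-1}P)\frak D_{n,k}^{-1}\cdots(P\frak T_{n,1}^{-1}P)\frak D_{n,1}^{-1}$ is again a product of non-negative matrices; the tropicalization argument then applies to $PA^{-1}P$ as you intended. Without this step (or an equivalent application of Cramer's rule / Jacobi's complementary minor identity to directly estimate the diagonal entries of $A^{-1}$ as ratios of minors of the totally positive matrix $A$), the inverse half of the theorem is not established.
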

\begin{proof}

Given a matrix $M$, let us denote by $M_{ij}$ its $(ij)$-th entry. Applying Proposition \ref{prop:multi} to our setup we can write
\[
A_{n,c}=\frak D_{n,1}\frak T_{n,1}\cdots \frak D_{n,k}\frak T_{n,k}.
\]
 As $A_{n,c}$ is totally positive, its diagonal entries are positive and $\rm{Tr}(A_{n,c})\geq (A_{n,c})_{mm}$. Furthermore, since for every $i=1,\dots, k$, the entries $(\frak D_{n,i})_{mm},(\frak T_{n,i})_{mm}$ are positive, we have
\begin{equation}\label{eqn:trace1}
\begin{aligned}
(A_{n,c})_{mm}&\geq (\frak D_{n,1})_{mm}\cdots (\frak T_{n,k})_{mm}\\
&=\prod_{e\in\cal E}\exp(\sigma(\rho_n, e))\prod_{i=1}^{k}(\frak T_{n,k})_{mm}
\end{aligned}
\end{equation}

By Definition \ref{def:treetype}, Theorem \ref{thm:FLP}, and Theorem \ref{thm:d3}, for $\veps>0$ there exist $L$ such that for all $n>L$ the following holds
\begin{itemize}
	\item $\exp\left(-\frac{\veps}{r_n}\right)<T^{abc}(\rho_n,t)<\exp\left(\frac{\veps}{r_n}\right)$ for all $t\in\cal T_{\cal E}$, for all $a,b,c\in\bb Z_{>0}$ with $a+b+c=m$.
	\item $D^{a}(\rho_n,e)>\exp\left(-\frac{\veps}{r_n}\right)$ for all $e\in\cal E$ and for all $a=1,\dots, m-1$.
\end{itemize}
In particular, it follows that $(\frak D_{n,i})_{mm}\exp\left(\frac{m\veps}{r_n}\right)$ is an upper bound for all the entries of the matrix $\frak D_{n,i}$. Moreover, for each $i=1,\dots, k$ and $j,l=1,\dots, m$ there exists a constant $S_{ijl}>0$ such that
\[
(\frak T_{n,i})_{j,l}<S_{ijl}\exp\left(\frac{\veps}{r_n}\right).
\]
Set $S=\max_{i,j,l}S_{i,j,l}>0.$ Then, using once again the positivity of the entries of $A_{n,c}$, for every $j=1,\dots, m$
\begin{equation}\label{eqn:trace2}
\begin{aligned}
(A_{n,c})_{jj}&=\sum_{j_1,\dots, j_{k-1}}(\frak D_{n,1})_{jj}(\frak T_{n,1})_{jj_1}\dots (\frak D_{n,k})_{j_{k-1}j_{k-1}}(\frak T_{n,k})_{j_{k-1}j}\\
&<S^{k}\exp\left(\frac{k\veps}{r_n}\right)\prod_{i=1}^{k} (\frak D_{n,i})_{mm}\exp\left(\frac{m\veps}{r_n}\right)\\
&=S^{k}\exp\left(\frac{(m+1)k\veps}{r_n}\right)\prod_{e\in\cal E}\exp(\sigma(\rho_n, e))
\end{aligned}
\end{equation}
Combining Inequalities \ref{eqn:trace1} and \ref{eqn:trace2} we see that
\[
\lim_n r_n\log\rm{Tr}(A_{n,c})=\lim_nr_n\sum_{e\in\cal E}\sigma(\rho_n, e).
\]

We now focus on $\rm{Tr}(A_{n,c}^{-1})$. Note that $A_{n,c}^{-1}$ is not a totally positive matrix. However, let us set $P$ equal to the diagonal matrix 
\[
\begin{bmatrix}
-1&&&\\
&1&&\\
&&\ddots&\\
&&&(-1)^m
\end{bmatrix}.
\]
Observe that $P^2$ is the identity matrix,  $P$ commutes with diagonal matrices, and that $PA_{n,c}^{-1}P$ is totally positive by Cramer's rule. Thus
\begin{align*}
PA_{n,c}^{-1}P&=P\frak T^{-1}_{n,k}\frak D^{-1}_{n,k}\dots \frak T^{-1}_{n,1}\frak D^{-1}_{n,1}P\\
&=(P\frak T^{-1}_{n,k}P)(P\frak D^{-1}_{n,k}P)\dots (P\frak T^{-1}_{n,1}P)(P\frak D^{-1}_{n,1}P)
\end{align*}
and the right hand side is a product of totally positive matrices $\frak U_{n,i}=P\frak T^{-1}_{n,i}P$ and $\frak E_{n,i}=\frak D^{-1}_{n,i}$.

In order to prove that $\lim_{n}r_n\log \rm{Tr}(A^{-1}_{n,c}=0$ we repeat, with minor variations, the first part of the proof. In summary, we start by observing that for every $\veps>0$, there exist $L>0$ and a positive constant $S'$ such that for $n>L$
\begin{enumerate}
	\item $(\frak U_{n,i})_{jl}<S'\exp\left(\frac{\veps}{r_n}\right)$, and
	\item $(\frak E_{n,i})_{jj}<\exp\left(\frac{m\veps}{r_n}\right)$.
\end{enumerate}
Moreover, by total positivity $(PA_{n,c}^{-1}P)_{11}$ is larger than
\[
(\frak D_{n,1})_{11}\dots (\frak T_{n,k})_{11}=\prod_{i=1}^{k}(\frak T_{n,k})_{11}>\exp\left(-\frac{\veps}{kr_n}\right).
\]
The proof follows by combining these inequalities.
\end{proof}

Theorem \ref{thm:action} follows at once from Theorem \ref{thm:actionsuper}.

\begin{remark} The proof of Theorem \ref{thm:actionsuper} shows that one could generalize Theorem \ref{thm:action}.\ref{cond:d3} to $m\geq 3$ by assuming the appropriate generalization of Equation \ref{eqn:tripleratiod3}.
\end{remark}

\appendix
\section{Scaling sequences from geodesic currents}\label{sec:scaling}

In this brief appendix we provide an example of a choice of scaling sequence for a sequence of Hitchin representations arising in the context of geodesic currents. Lemma \ref{lem:currents} was proved together with C. Ouyang. M. B. Pozzetti independently communicated the proof to the author.

We start by recalling the necessary results from the theory of geodesic currents. We refer to \cite{Bon86,Bon88} for more background.

A \emph{geodesic current} $\mu$ is a $\pi_1(S)$-invariant, locally finite, Borel measure on the set of geodesics $\cal G(\wt S)$ of $\wt S$. For example every conjugacy class $c$ of elements in $\pi_1(S)$ defines a geodesic current by considering the atomic measure on the axes of representatives of $c$. The space $\cal C(S)$ of geodesic current on $S$ is a cone in an infinite dimensional vector space and its projectivization $\cal {PC}(S)$ is compact. The \emph{intersection pairing} is a continuous, symmetric, bilinear, positive function $i\colon \cal C(S)\times\cal C(S)\to \bb R^+$ which extends the intersection number of curves to the space of geodesic currents.

Zhang and the author \cite{MZ19}, generalizing work of Bonahon \cite{Bon88}, associate to every Hitchin representation $\rho$ an \emph{intersection geodesic current} $\mu_\rho$ such that for each conjugacy class $c$ in $\pi_1(S)$
\[
\ell_H(\rho,c)=i(\mu_\rho,c).
\]
See \cite{BCLS18} for an independent construction of geodesic currents associated to Hitchin representations.

Consider a sequence $\rho_n$ of Hitchin representations and the corresponding sequence of geodesic currents $\mu_{\rho_n}$. By compactness of $\cal{PC}(S)$, there exists a sequence of positive real numbers $r_n$ such that $r_n\mu_n$ converges to $\mu\in\cal C(S)$. We want to observe that $\lim_n r_n=0$ and, thus, $r_n$ is a scaling sequence in the sense of Definition \ref{def:treetype}. It suffices to show the following.

\begin{lem}[M.-Ouyang, Pozzetti]\label{lem:currents} The map from $\sf {Hit}_m(S)\to \cal C(S)$ is proper.
\end{lem}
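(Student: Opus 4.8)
The plan is to show that the map $\Phi\colon \sf{Hit}_m(S)\to\cal C(S)$, $\rho\mapsto\mu_\rho$, is proper by pulling back properness from a finite collection of length functions. Recall that the Hilbert length functions $\ell_H(\cdot,c)=i(\mu_{(\cdot)},c)$ are continuous on $\sf{Hit}_m(S)$, and that a sequence $\rho_n$ in $\sf{Hit}_m(S)$ leaves every compact set if and only if it leaves every compact set in the character variety. So the key step is to produce a \emph{finite} set of conjugacy classes $c_1,\dots,c_N$ such that the map $\rho\mapsto(\ell_H(\rho,c_1),\dots,\ell_H(\rho,c_N))$ is proper on $\sf{Hit}_m(S)$; then, since $\ell_H(\rho,c_i)=i(\mu_\rho,c_i)$ and $i$ is continuous on $\cal C(S)$, any compact $K\subset\cal C(S)$ has $\sup_{\mu\in K}i(\mu,c_i)<\infty$ for each $i$, forcing $\Phi^{-1}(K)$ to lie in a compact subset of $\sf{Hit}_m(S)$.

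To obtain such a finite set, I would first recall the classical analogue: for the Teichmüller space $\cal T(S)$ there is a finite set of simple closed curves whose hyperbolic length functions give a proper map to $\bb R^N$ (e.g. a pants decomposition together with enough transverse curves, as in Fenchel–Nielsen coordinates). For the Hitchin component one can use the Bonahon–Dreyer parametrization (Theorem \ref{thm:bdmain}): $\sf{Hit}_m(S)$ is homeomorphic to an explicit convex polytope in $\bb R^N$, with coordinates the triangle parameters $\tau^{abc}(\rho,t)$ and shearing parameters $\sigma^a(\rho,e)$ relative to the standard lamination $\kappa$. Via the Bonahon–Dreyer length relations (Proposition \ref{prop:lengthequ}), the quantities $\log\frac{\lambda_a}{\lambda_{a+1}}(\rho,g)$ for boundary curves $g$ of the pants decomposition are explicit linear combinations of these coordinates; adding curves dual to the pants curves (the arcs in $\cal K$ promoted to closed curves crossing the cuffs) one gets control on the remaining shearing directions, and suitably chosen curves inside each pair of pants crossing the open leaves of $\kappa$ control the triangle parameters. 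A cleaner route, avoiding the bookkeeping, is to invoke that the Hilbert length spectrum, or even the simple-root length spectrum, is known to be proper on the Hitchin component — this follows from work making $\sf{Hit}_m(S)$ a component on which eigenvalue gaps blow up at the boundary — and to extract finitely many curves by a compactness/covering argument: if no finite set sufficed, one could diagonalize to get a sequence $\rho_n\to\infty$ with $\ell_H(\rho_n,c)$ bounded for every $c$, contradicting that an unbounded sequence in the Hitchin component has some translation length tending to infinity (cf. \cite{Par12}).

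The last point is really the crux, so let me isolate it. \textbf{Main obstacle:} proving that an \emph{unbounded} sequence $(\rho_n)$ in $\sf{Hit}_m(S)$ necessarily has $\sup_c\ell_H(\rho_n,c)\to\infty$ — equivalently, that the systole of the Hilbert length goes to zero is \emph{not} enough and one genuinely needs a diameter-type blow-up — and then upgrading this to a uniform statement over a \emph{finite} curve set. I expect to handle it as follows: embed $\sf{Hit}_m(S)$ in the character variety, use that the character variety is an affine real algebraic set on which the functions $\rho\mapsto\rm{Tr}(\rho(\gamma))$ (over all $\gamma\in\pi_1(S)$) separate points and are proper as a family, hence some \emph{finite} subfamily is already proper by Noetherianity of the coordinate ring; then relate $|\rm{Tr}(\rho(\gamma))|$ to $\ell_H(\rho,\gamma)$ for Hitchin representations — by Theorem \ref{thm:lengths} and positivity, $\rho(\gamma)$ is diagonalizable with positive distinct eigenvalues, so $\rm{Tr}(\rho(\gamma))$ is comparable to $e^{\text{(top eigenvalue)}}$ up to the eigenvalue gaps, and a short argument (of the flavor of Lemma \ref{lem:trace}) gives $\log|\rm{Tr}(\rho(\gamma))|\le\ell_H(\rho,\gamma)+O(1)$ together with a matching lower bound after possibly enlarging the finite set to include $\gamma$ and $\gamma^{-1}$. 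Combining, properness of the finite trace family yields properness of the finite Hilbert-length family, which yields properness of $\Phi$ as explained above. The one subtlety to be careful with is that $\log|\rm{Tr}|$ can fail to be proper if a trace stays bounded while an eigenvalue gap degenerates; this is ruled out inside the Hitchin component precisely because there all eigenvalues stay positive and distinct and the relevant sign/positivity constraints (Theorems \ref{thm:flagmap}, \ref{thm:poslimmap}) prevent cancellation in the trace, so $|\rm{Tr}(\rho(\gamma))|$ is bounded below by (a constant times) the largest eigenvalue of $\rho(\gamma)$.
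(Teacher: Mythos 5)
Your reduction is essentially the paper's: both arguments hinge on the existence of a \emph{finite} set of curves whose Hilbert lengths give a proper map $\sf{Hit}_m(S)\to\bb R^N$, and both then transfer this to properness of $\rho\mapsto\mu_\rho$ via the intersection pairing. For the transfer step, the paper invokes the exhaustion criterion of \cite[Lemma 4]{Bon88} --- that $\{\mu: i(\mu,\alpha)\leq p\}$ is compact when $\alpha$ contains a pants decomposition and fills $S$ --- whereas you only use continuity of $i$ on compact sets of currents. Your version is correct and slightly more economical, though the paper's choice of the filling system $\alpha$ is guided precisely by Bonahon's criterion.

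The substantive difference is in how the key ingredient, properness of the finite-curve Hilbert length map, is established. The paper simply cites it as \cite[Corollary 1.5]{LZ17} and is done in three lines. You instead sketch several attempted proofs, and here there are genuine gaps. The ``compactness/covering'' alternative does not work as stated: if for each finite set $F_k$ one has a divergent sequence $\rho_n^k$ with $\ell_H(\rho_n^k,c)\leq M_k$ for $c\in F_k$, a diagonal extraction does \emph{not} produce a divergent sequence with $\ell_H(\rho_n,c)$ bounded for every fixed $c$, because the bounds $M_k$ need not be uniform in $k$; so there is no contradiction to extract. The GIT/Noetherianity route is morally in the right direction but glosses over real points of substance: trace functions are not defined on $\rm{PSL}_m(\bb R)$, so one must pass to lifts or to the scale-invariant quantity $T_H$; one must then verify that finitely many such invariants cut out a closed embedding of the \emph{real} character variety, and that the Hitchin component sits inside it as a closed subset, before properness restricts; none of this is automatic over $\bb R$. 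The Bonahon--Dreyer route you dismiss as bookkeeping is in fact the closest to a self-contained proof (it is, roughly, what underlies \cite{LZ17}), via Theorem \ref{thm:bdmain} and Proposition \ref{prop:lengthequ}, but you do not carry it out. In short: same skeleton, but the paper obtains the crucial properness statement by citation, while your attempts to re-derive it from scratch are either incomplete or incorrect.
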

\begin{proof} Let $\alpha$ be finite collection of simple closed curves on $S$ such that
\begin{enumerate}[-]
	\item $\alpha$ contains the curves in a pants decomposition of $S$;
	\item the complement of $\alpha$ in $S$ is a union of topological discs.
\end{enumerate} 
By \cite[Lemma 4]{Bon88}
\[
\bigcup_{p>0}\{\mu\in \cal C(S)\colon i(\mu,\alpha)\leq p\}
\]
is an exhaustion of $\cal C(S)$ by compact sets. By \cite[Corollary 1.5]{LZ17}, the map
\[
\sf{Hit}_m(S)\to \bb R^{k}
\] 
that sends $\rho$ to the Hilbert lengths of the curves in $\alpha$ is proper. Therefore, if $\rho_n$ diverges in $\sf{Hit}_m(S)$, then $i(\mu_n,\alpha)$ diverges and, thus $\mu_n$ diverges in $\cal C(S)$.
\end{proof}
\bibliography{mybib}
\bibliographystyle{abbrv}
\end{document}